\definecolor{light-gray}{gray}{0.95}
\newcommand{\lj}[1]{#1}
\newcommand{\mm}[1]{#1}
\newtheorem{theorem}{Theorem}[section]
\newtheorem{lemma}[theorem]{Lemma}
\newtheorem{remark}[theorem]{Remark}
\newtheorem{definition}{Definition}[section]
\numberwithin{equation}{section}
\newcommand{\mc}[1]{{\mathcal #1}}
\newcommand{\bb}[1]{{\mathbb #1}}
\newcommand{\<}{\langle}
\renewcommand{\>}{\rangle}
\renewcommand{\epsilon}{\varepsilon}
\def\centerarc[#1](#2)(#3:#4:#5){\draw[#1] ($(#2)+({#5*cos(#3)},{#5*sin(#3)})$) arc (#3:#4:#5);}
\renewcommand{\geq}{\geqslant}
\renewcommand{\leq}{\leqslant}
\renewcommand\bar{\overline}
\renewcommand{\tilde}{\widetilde}
\newcommand{\R}{\mathbb{R}}
\newcommand{\Q}{\mathbb{Q}}
\newcommand{\Z}{\mathbb{Z}}
\newcommand{\N}{\mathbb{N}}
\newcommand{\E}{\bb E}
\newcommand{\Lsink}{\mc L_N^{\alpha,\beta}}
\newcommand{\Lex}{\mc L_N^{\text{\rm ZR}}}
\let\oldtocsection=\tocsection
\let\oldtocsubsection=\tocsubsection
\let\oldtocsubsubsection=\tocsubsubsection
\renewcommand{\tocsection}[2]{\hspace{0em}\oldtocsection{#1}{#2}}
\renewcommand{\tocsubsection}[2]{\hspace{1em}\oldtocsubsection{#1}{#2}}
\renewcommand{\tocsubsubsection}[2]{\hspace{2em}\oldtocsubsubsection{#1}{#2}}
\DeclareRobustCommand{\SkipTocEntry}[5]{}
\newcommand{\dd}{{\mathfrak{d}}}
\newcommand{\pa}[1]{\left(#1 \right)}
\newcommand{\cro}[1]{\left[#1\right]}
\newcommand{\br}[1]{\left\{#1\right\}}
\newcommand{\Prob}{{\mathbb{P}}}
\newcommand{\red}[1]{{\color{black} #1}}
\newcommand{\ccl}[1]{{\color{black} #1}}
\begin{document}

\allowdisplaybreaks

\title{Asymmetric attractive zero-range processes with particle destruction at the origin}

%\linenumbers

%    author information
\author{Cl\'ement Erignoux}
\address{Inria, Univ. Lille, CNRS, UMR 8524 - Laboratoire Paul Painlev\'e, F-59000 Lille}
\email{clement.erignoux@inria.fr}
\author{Marielle Simon}
\address{Université Claude Bernard Lyon 1, CNRS UMR 5208, Institut Camille Jordan, F-69622 Villeurbanne, France
\textit{and} GSSI, L'Aquila, Italy}
\email{msimon@math.univ-lyon1.fr}
\author{Linjie Zhao}
\address{School of Mathematics and Statistics, Huazhong University of Science \& Technology, Luoyu Road 1037, Wuhan 430074, China.}
\email{linjie\_zhao@hust.edu.cn}

\thanks{%$(\diamond)$  \texttt{marielle.simon@inria.fr}.
\textsc{Acknowledgments}: The authors would like to thank Christophe Bahadoran for his help navigating asymmetric zero-range processes. This project is partially supported by the ANR grant MICMOV (ANR-19-CE40-0012)
of the French National Research Agency (ANR), and by the European Union with the program Fonds européen de développement régional. It has also received funding from the European
Research Council (ERC) under the European Union’s Horizon 2020 research and innovative program
(grant agreement n o 715734),  from
Labex CEMPI (ANR-11-LABX-0007-01), and from the Fundamental Research Funds for the Central Universities in China.}

\keywords{}

\begin{abstract} We investigate the macroscopic behavior of asymmetric attractive zero-range processes on $\Z$ where particles are destroyed at the origin at a rate of order $N^\beta$, where $\beta \in \R$  and $N\in\N$ is the scaling parameter. We prove that the hydrodynamic limit of this particle system is described by the unique entropy solution of a hyperbolic conservation law, supplemented by a boundary condition depending on the range of $\beta$. Namely, if $\beta \geqslant 0$, then the boundary condition prescribes the particle current through the origin, whereas if $\beta<0$, the destruction of particles at the origin has no macroscopic effect on the system and no boundary condition is imposed at the hydrodynamic limit.

\end{abstract}

\maketitle

%\tableofcontents

\section{Introduction}

Asymmetric and conservative particle systems are among the most interesting microscopic dynamics  currently  under investigation in statistical physics. Their macroscopic behavior, due to the microscopic asymmetry of the particles' motion, is generally expected to be described by hyperbolic equations, whose solutions may be visualized as propagating waves. When the system is nonlinear, jump discontinuities, known as \emph{shocks}, can arise in finite time, even if the initial profile is smooth. Even for linear systems, discontinuities can appear in presence of boundary effects. Therefore, when asymmetric particle systems are subject to strong boundary mechanisms, their macroscopic behavior is much less understood. In a general setting, imposing given boundary conditions at the macroscopic level starting from a given microscopic model is far from trivial.  Because of discontinuities, hyperbolic equations do not in general have a unique solution; in order to fully describe the macroscopic limit of a given microscopic asymmetric system, one needs to rule out the non-physical solutions. The unique relevant one, called \emph{entropy solution}, can be characterized in several different ways, and exhibits physically consistent behavior after the shocks (see the reference book \cite{malek1996}, in particular Chapters 2.6 and 2.7 for the existence and uniqueness of solutions to hyperbolic conservation laws in bounded domains). \mm{The name \emph{entropy} comes from the additional criterion, \textit{aka} the \emph{entropy inequality}, which is imposed in order to ensure the uniqueness of a weak solution, and, from the physical viewpoint, select the `correct' physical
solution among all weak ones \ccl{(see Remark \ref{rem:heur} below, and \cite[Chapter 2.3]{malek1996} for a heuristic derivation of this inequality).}}

The characterization of the macroscopic behavior of asymmetric particle systems  by hyperbolic equations has been initiated by Rezakhanlou in \cite{rezakhanlou91}. In the latter, the author derives the \emph{hydrodynamic limit} -- the law of large \lj{numbers} characterizing a particle system's macroscopic behavior in terms of a given PDE -- in cases where the microscopic dynamics are \emph{attractive}. He proved in particular that  the macroscopic density $\rho(t,u)$ of the Asymmetric Simple Exclusion Process (ASEP), in the hyperbolic time scale is given by the unique entropy solution to the hydrodynamic equation $\partial_t\rho=\partial_u(\rho(1-\rho))$ on the full line $\R$. Twenty years later Bahadoran \cite{bahadoran2012hydrodynamics} considered  a class of asymmetric lattice gases, including the ASEP, with boundary interactions and obtained in a fairly general setting the convergence to
the unique entropy solution on a bounded domain, satisfying the \emph{Bardos-Leroux-N\'ed\'elec boundary condition} \cite{bardos1979} in the sense of Otto \cite{otto96}. Very recently, De Masi \emph{et al.}~\cite{demasi2021}  have carried on with the study of the open ASEP, and considered its \emph{quasi-static} hydrodynamic limit with time dependent boundary rates changing on a slow  time scale (w.r.t.~the hyperbolic one). The quasi-static limit they obtain characterizes the slow transition between non-equilibrium stationary states due to the influence of the boundary. They prove in \cite{demasi2021} that the density profile
converges to the entropy solution of the quasi-static Burgers equation with the corresponding time dependent boundary condition.  Finally, Xu \cite{xu2021hydrodynamic} has further  derived the hydrodynamic limit for the open ASEP with accelerated boundaries perturbed by stirring dynamics.

In the case of asymmetric \emph{zero-range} processes,  the law of large numbers for the particle density on the full line has also been obtained in the attractive case by Rezankhanlou in \cite{rezakhanlou91}. Then, Landim \cite{landim1996hydrodynamical} focused on  the \emph{totally
 asymmetric} case and introduced a boundary effect at the origin by slowing down the jump rate at 0. He  proved the convergence of the density profile towards the unique entropy solution of the hyperbolic conservation law with mass creation/annihilation at the origin. Bahadoran's later work \cite{bahadoran2012hydrodynamics} also encompassed general zero-range processes, and therefore, from his work follows the hydrodynamic limit for the asymmetric zero-range process on the real line and in presence of boundary reservoirs.

In the present article, we build on Bahadoran's work \cite{bahadoran2012hydrodynamics} by considering  general  asymmetric zero-range processes on $\Z$ and we introduce a microscopic mechanism which destroys particles at the origin: let  $g:\N\to\R_+$ be the jump rate function, $\alpha \geqslant 0$ and $\beta \in\R$ be two parameters that adjust the intensity of the destruction dynamics, and $N\in\N$ be the hydrodynamic limit's scaling parameter. Then, the particle system evolves as follows:\begin{itemize} \item (\textit{bulk dynamics}) if there are $\omega_x \in \N$ particles at site $x\in\Z$, then one of these  particles jumps to its right (resp.~left) neighbour at rate $pg(\omega_x)$  (resp.~$(1-p)g(\omega_x)$), \smallskip
\item (\textit{boundary effect})  at the origin  $x=0$, one particle is destroyed at rate $\alpha N^\beta g(\omega_0)$.\end{itemize}   
The purpose of this paper is twofold: first, we  explore the effect of the ``boundary'' dynamics at the origin on the asymmetric zero-range processes on the full line. Because the microscopic destruction mechanism is \emph{inside} the domain, and not at its border as in \cite{bahadoran2012hydrodynamics}, there is a non-trivial interplay between the two resulting half-lines. Note that we consider a  destruction dynamics whose rate $N^\beta$ varies (w.r.t.~the asymmetric jump dynamics), in the spirit of the slow boundaries recently thoroughly investigated for symmetric exclusion processes \cite{BMN17, FGN15}. Our main result is the derivation of a family of hyperbolic equations on the one-dimensional space $\R$, depending on the relative strength of the destruction dynamics. As is the case for the Simple Symmetric Exclusion Process (SSEP)  (see \emph{e.g.}~\cite{BMN17}), we show that the macroscopic behavior of the asymmetric zero-range process at the boundary depends on the value of $\beta$ (see  Theorem \ref{thmAttrac} for a precise statement): \begin{enumerate}[(i)] \item if $\beta <0$ (subcritical case),  the particle destruction has no macroscopic effect and the hydrodynamic equation is given by the hyperbolic conservation law on the full line $\R$ with no boundary condition; 
\item if $\beta= 0$ (critical case),  the hyperbolic conservation law is now supplemented by a boundary condition which prescribes the particle current allowed through the origin (as an explicit function of the density left of the origin), roughly analogous to the Robin boundary condition obtained for the SSEP \cite{BMN17}; \item if $\beta>0$ (supercritical case), this current through the origin vanishes at the macroscopic scale  and the system then enforces some Dirichlet type  boundary condition, whose effect is to remove all mass from a growing segment $[0,(2p-1)t]$. \end{enumerate}

The second purpose of this article is to provide a simple proof for the hydrodynamic limit in the linear case where $g(k)=k$. The solution to the hydrodynamic equation then becomes an explicit function of the initial density profile in the system. In this particular case, we provide explicit formulas and a short and new proof for the hydrodynamic limit based on duality arguments inspired by previous work \cite{Erignoux18, ELX18} (see Section \ref{app}).

All the results mentioned above \cite{bahadoran2012hydrodynamics,demasi2021,landim1996hydrodynamical,rezakhanlou91} strongly rely on the \emph{attractiveness property}  of both the ASEP and zero-range processes. Here, we keep attractiveness as a crucial assumption.  Indeed, attractive zero-range processes have nice  features which allow us to introduce the notion of \emph{second-class particles} and to use coupling techniques. Similarly to \cite{demasi2021} we need to control the microscopic boundary entropy flux and we use the notion of  boundary entropy-entropy flux
pairs introduced by Otto \cite{otto96} and used in \cite{landim1996hydrodynamical}  in order to characterize the boundary conditions in the scalar hyperbolic equations.
 For non-attractive asymmetric processes, very little is known: in fact,  Yau's classical \emph{relative entropy} approach \cite{yau1991relative} is the unique method that derives the hydrodynamic behavior of such purely asymmetric processes in general, but it requires the solution to the hydrodynamic equation to be smooth. For this reason, it can
 usually only be applied to asymmetric systems up to the time of the first shock, and furthermore fails completely in the presence of boundary conditions such as those considered in this work. Another option, which would allow to derive non-attractive hydrodynamic limits for asymmetric systems in the presence of discontinuities (past the first shock, in particular) would be to adapt Fritz's \emph{compensated compactness arguments} \cite{fritz2004,fritztoth}. This technique, however, is not without its own shortcomings, and in particular requires to perturb the underlying microscopic dynamics by some symmetric stirring dynamics. This perturbation needs to be strong enough to ensure that one recovers at the hydrodynamic limit the entropy (or vanishing viscosity) solution to the hyperbolic equation, but weak enough so that the symmetric part of the dynamics does not appear in the limiting equation. This work is currently in progress in order to drop the attractiveness assumption in our model.
 
The article is organized as follows: in Section \ref{sec:model} we introduce the microscopic model and state the main result of the paper, namely the hydrodynamic limit for our attractive zero-range process with destruction of particles at the origin. In Section  \ref{sec:prelim} we prove preliminary results on second-class particles and present the coupling argument, which will be used in Section \ref{sec:proof} to prove the main theorem. In Section \ref{app} we present a short alternative proof in the specific case where the jump rate is linear, using duality arguments, and in Appendix \ref{app:equiv} we prove a technical result on the equivalence of two notions of entropy solutions.

\section{Definitions, notations, and results}\label{sec:model}

\subsection{Zero-range process with source dynamics} 
Let $N$ be a positive integer and $\Omega = {\bb N}^{\bb Z}$  be the state space of the Markov process  $(\omega(t))_{t\geq 0}$ which will be the focus of this article. \lj{Its} dynamics is characterized below by its infinitesimal generator $\mathcal{L}_N$. Given a  configuration of particles $\omega \in \Omega$, $\omega_x \in \N$ represents the number of particles at site $x\in\Z$.  Before defining the generator $\mc L_N$, let us fix: 
\begin{itemize}\item  two parameters  $\alpha\geq 0$ and $\beta \in\R$, which adjust  the intensity of the particle destruction at the origin;
\item the asymmetry parameter $p \neq \frac12$. Without loss of generality, we assume $\frac12 < p \leq 1$;
\item a function $g: \mathbb{N} \rightarrow \mathbb{R}_+$ which represents the rate at which one particle leaves a site. This rate function satisfies $g (0) = 0$, $g (k) > 0$ for any $k \geq 1$, and is assumed to be \emph{Lipschitz}: there is a constant $a_0>0$ such that
\begin{equation}
\label{eq:lipg}
\sup_{k \geq 0} |g (k+1) - g(k)| \leq a_0.
\end{equation} 
We consider in the present work the \emph{attractive case}, which means that we further assume that
\begin{equation}
\label{ass:H1} \tag{H1}
\mbox{the function $g$ is non-decreasing, \textit{i.e.}~$g (k) \leq g (k+1)$ for all $k \geq 0$.} 
\end{equation}
 \end{itemize}
We say that a function $f:\Omega \to \bb R$ is \emph{local} if it depends on $\omega=\{\omega_x\}_{x\in\bb Z} \in \Omega$ only through a finite number of coordinates $(\omega_{x_1},\dots,\omega_{x_k})$, and a local function $f:\Omega \to \bb R$ is  \emph{Lipschitz} if there is a constant $c>0$ and a finite subset $\Lambda\subset \Z$ such that
\[
|f(\eta) - f(\zeta)| \leq c \sum_{x\in\Lambda}|\eta(x) - \zeta(x)|, \quad \text{ for all } \eta,\,\zeta\in \Omega.
\]
The generator $\mc L_N$ of the zero-range process considered here acts on local Lipschitz functions $f: \Omega \to \bb R$ by
\begin{equation}\label{eq2.1}
{\mathcal L}_N f(\omega) =  \Lex f(\omega)+\Lsink f(\omega).
\end{equation}
The generator $\Lex$ of the \emph{asymmetric zero-range dynamics} is given by
\begin{equation}\label{lex}
\Lex f(\omega) =  \sum_{x \in \bb Z}g(\omega_x)  \Big\{ p \nabla_{x,x+1} f(\omega) + (1-p) \nabla_{x,x-1} f(\omega) \Big\}
\end{equation}
where for $\omega\in\Omega$ and $x,y\in\mathbb Z$, we denote $\nabla_{x,y} f(\omega)=f(\omega^{x,y})-f(\omega)$ and $\omega^{x,y} \in \Omega$ is the configuration obtained from $\omega$ after a particle jumps from site $x$ to site $y$,
\[
\omega^{x,y}_z=
\left\{
\begin{array}{rl}
\omega_x-1& \text{ if } z=x,\\
\omega_y+1& \text{ if } z=y,\\
\omega_z& \text{ if } z\neq x,y.\\
\end{array}
\right.
\]
The generator $\Lsink$ of the \emph{source dynamics} is given by
\[
\Lsink f(\omega)\;=\;\alpha N^\beta g(\omega_0)\; \Big\{ f(\omega^{(0)})- f(\omega)\Big\}\,,
\]
where $\omega^{(0)} \in \Omega$ is the configuration obtained from $\Omega$ after the destruction of one particle at the origin
\[
\omega^{(0)}_x=\omega_x-{\bb 1}\{x=0\},
\]
$\beta\in \R$, and $\alpha>0$ is a positive parameter, fixed throughout. We refer the readers to \cite{Andjel82} for a rigorous construction of this infinite-volume dynamics, when $\alpha=0$. The general case $\alpha \neq 0$ is a straightforward adaptation of \cite{Andjel82}. 

\medskip

\subsection{Invariant measures.}\label{sebsec_invariant} We now describe the invariant measures for the zero-range process \emph{without} particle destruction. To that aim, following \cite[Chapter 2]{kipnis1989hydrodynamics}, let us introduce the partition function $Z:\R_+ \to \R_+$ given by
\[
Z (\zeta) = \sum_{k \geq 0} \frac{\zeta^k}{g(k)!},  \qquad \text{for } 0 \leq  \zeta < \zeta^*,
\]
where $\zeta^*$ is the radius of convergence of the above summation and $g(k)!:=\prod_{\ell=1}^k g(\ell)$ with the convention $g(0)!=1$.   For each $0 \leq  \zeta < \zeta^*$, let $\bar{\nu}_\zeta$ be the product measure on $\Omega$ with marginals given by 
\[
\bar{\nu}_{\zeta}(\omega_x = k) = \frac{1}{Z (\zeta)}\; \frac{\zeta^k}{g(k)!}, \quad x\in\Z, \,k \in\N.
\]
It is easy to check that $\bar{\nu}_{\zeta}$, $0 \leq \zeta < \zeta^*$, are invariant measures for the generator $\Lex$. Furthermore, the particle density $R (\zeta) = \int \omega_0 \bar{\nu}_{\zeta} (d \omega)$ is strictly increasing in $\zeta$, hence has an inverse denoted by $\Phi(\cdot)$. To index by density the invariant measures of the generator $\Lex$, we denote $\nu_\rho = \bar{\nu}_{\Phi (\rho)}$ and note that \begin{equation}\Phi (\rho) = \int g (\omega_0)\,\nu_\rho (d \omega).\label{eq:phi}\end{equation}
Under our assumptions, $\Phi: \R_+ \to\R_+$ is smooth and non-decreasing, where $\R_+=[0,+\infty)$. Throughout, when $0$ needs to be excluded from $\R_+$, we will write $(0,+\infty)$.

The invariant measures for the full process generated by $\mathcal{L}_N$ will be given in Section \ref{sec:invariantm}. In the following, for a probability measure $\nu$ on a state space $\Omega$, we denote by $E_{\nu}[f]$ the expectation of a function $f$ defined on $\Omega$ with respect to $\nu$.

\subsection{Entropy solutions}

%Analogously to \cite{rezakhanlou91}, we assume that
%\begin{itemize}
%	\item $\mu_N$ is a product measure;
%	
%	\item there exists a bounded sequence $\rho_{x,N} \leq \rho^\star < \infty$ for all $x,\,n$,  and an initial density profile $\rho_0 \in L^\infty (\bb{R})$ such that
%	\begin{align*}
%		&\mu_N \{\omega: \omega (x) = k\} = \nu_{\rho_{x,N}} \{\omega: \omega (x) = k\},\quad \text{ for any } x \in \bb{Z},\,k \in \bb{N},\\
%		&\text{and } \quad \lim_{N \rightarrow \infty} \int_{|u| \leq A} |\rho_{\lfloor uN \rfloor,N} - \rho_0 (u)|\,du = 0, \quad \text{ for every } A > 0.
%	\end{align*}
%\end{itemize}
% Here, $\lfloor uN \rfloor$ is the integer part of $uN$. 
% 
 %Recall that the rate function $g$ is non-degenerate ($g(1)>0$), Lipschitz and non-decreasing.
% We divide our results according to two additional assumptions made on function $g$:  
%\begin{enumerate}
% \item[(H1)] first, we assume that the function $g$ is non-decreasing, \textit{i.e.}~$g (k) \leq g (k+1)$ for all $k \geq 0$. That is, the process is \emph{attractive}. 
%% \item[(H2)] then, we treat the \emph{non-attractive}  case, assuming that $g$ satisfies the following: there exists $k_0 >0$ and $a_1 >0$ such that $g(k)-g(k')\geqslant a_1$ for any $k,k'$ satisfying $k-k'\geqslant k_0$.
%\end{enumerate} 
% 
%Recall the definition $\Phi (\rho) = \mathrm{E}_{\nu_\rho} [g (\omega_0)]$.

\medskip

To state our main result, we first need to introduce the notion of \emph{entropy solutions} to the hydrodynamic limit in the various cases for the parameter $\beta$. We start with the standard definition when no boundary condition is imposed, and then we define the entropy solution  to  \emph{initial-boundary value problems}, as given for instance in \cite{bahadoran2012hydrodynamics,landim1996hydrodynamical}. In the following we denote by $C_K^{k,\ell}(U\times V)$ the set of functions $H:U\times V\to\R$ which are compactly supported, and $C^k$-regular (resp.~$C^\ell$) in the first (resp.~second) variable.

\begin{definition}
[Hydrodynamic equation  on $\R$ for  $\beta < 0$] 
\label{def:ES1}
Assume that $\rho \mapsto \Phi(\rho)$ is a smooth function.
	We say that $\rho \in L^\infty (\R_+\times\R)$ is an entropy solution to 
\begin{equation}\label{hEneg}
	\begin{cases}
		\partial_t \rho (t,u) + (2p-1) \partial_u \Phi (\rho (t,u)) = 0,&\quad t > 0, u \in \bb{R},\\
		\rho (0,u) = \rho_0 (u),&\quad u \in \mathbb{R},
	\end{cases}
\end{equation}
if 
\begin{itemize}
\item for any non-negative test function $H \in C_K^{1,1}((0,+\infty)\times\R)$, and any constant $c\geqslant 0$,
	\begin{equation}\label{hEnegInteg}
		\begin{aligned}
			\int_0^\infty \int_{\R}  \left\{\partial_t H (t,u) \big|\rho(t,u)-c\big|+  (2p-1)\partial_u H (t,u) \big|\Phi (\rho(t,u)) - \Phi(c)\big|\right\} \,du\,dt \geq 0,
		\end{aligned}
	\end{equation}

\item for every $A > 0$,
\[
	\lim_{t \rightarrow 0} \int_{-A}^A\, \big|\rho (t,u) - \rho_0 (u)\big| \,du = 0.
\]
\end{itemize}
\end{definition}

\begin{remark}\label{rem:regularity} It is known that 
entropy solutions are continuous in $(t,u)$ for all but  at most a countable number of shock lines $(t,u)$ (see \cite[Chapter 2]{malek1996} for example). Therefore there exists a version $\rho$  solution to \eqref{hEneg} which is continuous at $(t,0)$ for
all but a countable number of $t$. Then it is easy to check
\[\int_{\R_+} \big\{\rho(t,u) - \rho_0 (u)\big\}\,du  = \int_0^t (2p-1) \Phi (\rho (s,0))\,ds\]
so that in particular 
\[\partial_t \int_{\R_+} \big\{\rho(t,u) - \rho_0 (u)\big\}\,du =  (2p-1) \Phi (\rho (t,0))\]
and $t\mapsto \Phi(\rho(t,0))$ is almost everywhere continuous on $\R_+$.
\end{remark}

\begin{remark} \label{rem:heur}
\mm{Let us recall here the standard heuristics for the derivation of  \eqref{hEnegInteg}.}
For any $\varepsilon > 0$,	consider the following \ccl{viscous} approximation of the equation \eqref{hEneg},
	\[
		\begin{cases}
			\partial_t \rho^\varepsilon (t,u) + (2p-1) \partial_u \Phi (\rho^\varepsilon (t,u)) = \varepsilon \partial_u^2 \rho^\varepsilon (t,u),&\quad t > 0, u \in \bb{R},\\
			\rho^\varepsilon (0,u) = \rho_0 (u),&\quad u \in \mathbb{R}.
		\end{cases}
	\]
Let $E: \R \rightarrow \R$ be continuously differentiable and convex. Let  $F: \R \rightarrow \R$ be such that $(2p-1) \Phi^\prime E^\prime = F^\prime$.  The pair $(E,F)$ is called an entropy-entropy flux pair in the literature. One could check directly that
\[\partial_t E(\rho^\varepsilon) + \partial_u F(\rho^\varepsilon) = \varepsilon \partial_u^2 E(\rho^\varepsilon) - \varepsilon E^{\prime \prime} (\rho^\varepsilon) [\partial_u \rho^\varepsilon]^2.\]  
Since $E$ is convex, 
\[\partial_t E(\rho^\varepsilon) + \partial_u F(\rho^\varepsilon) \leq \varepsilon \partial_u^2 E(\rho^\varepsilon).\]  
\ccl{Letting $\varepsilon \rightarrow 0$, called in the litterature the \emph{vanishing viscosity limit,} and assuming $\rho^\varepsilon$ converges  to  the solution $\rho$} of \eqref{hEneg},
\[\partial_t E(\rho) + \partial_u F(\rho) \leq 0\]  
in the sense of distributions. \ccl{Although this bound must hold for any entropy-entropy flux pair $(E,F)$ for the physically relevant solution, in order to select the latter, it is enough that it holds  for all pairs $E(\rho) = |\rho-c|$ and $F(\rho) = (2p-1) |\Phi(\rho) - \Phi (c)|$, where $c > 0$, which is precisely the entropy inequality \eqref{hEnegInteg}}. 
\end{remark}

In the cases where $\beta\geq 0$, we will need to separate both parts of the real line because of the boundary condition at the origin.
\begin{definition}
[Hydrodynamic equation on $\R^+$ for $\beta = 0$] 
\label{def:ES2}
Assume that $\rho \mapsto \Phi(\rho)$ is a smooth  {non-decreasing} function. Let $\varrho: \R_+ \rightarrow \R_+$ be an almost-everywhere continuous function. We say that $\rho \in L^\infty (\R_+ \times \mathbb{R}_+)$  is an entropy solution to  

	\begin{equation}\label{hE0}
		\begin{cases}
			\partial_t \rho (t,u) + (2p-1) \partial_u \Phi (\rho (t,u)) = 0,&\quad t > 0, u > 0,\\
		\rho (t,0) = \varrho (t),&\quad t > 0,\\
			\rho (0,u) = \rho_0 (u),&\quad u \in \mathbb{R}_+,
		\end{cases}
	\end{equation}
	if  
	\begin{itemize}
		\item there exists $M > 0$ such that for any non-negative test function $H \in C_K^{1,1} ((0,\infty) \times \R)$ and any constant $c \geq 0$,
		
		\begin{equation}\label{hE0Integ1}
			\begin{aligned}
						&\int_0^\infty \int_0^\infty  \big\{ \partial_t H (t,u) (\rho(t,u)-c )^{\pm}+ (2p-1)\partial_u H(t,u) (\Phi (\rho(t,u)) - \Phi(c) )^{\pm}\big\}  \,du\,dt \\
						&\qquad + M \int_0^\infty H (t,0) (\varrho (t)-c)^{\pm} \,dt \geq 0,
			\end{aligned}
		\end{equation}
	where for $r \in \bb{R}$, $r^+ = \max \{r,0\}$ and $r^- = \max \{-r,0\}$;
			\item for every $A > 0$ 
			
		\begin{align*}
			\lim_{t \rightarrow 0} \int_0^A \big|\rho (t,u) - \rho_0 (u)\big| \,du = 0.
		\end{align*}
	\end{itemize}
\end{definition}

\begin{definition}[Hydrodynamic equation  on $\R^+$ for $\beta > 0$]\label{def:sol}
Assume that $\rho \mapsto \Phi(\rho)$ is a smooth  non-decreasing function. Let $f: \R_+ \rightarrow \R_+$ be an almost-everywhere continuous function. We say that $\rho \in L^\infty (\R_+ \times \mathbb{R}_+)$  is an entropy solution to  

\begin{equation}\label{hEpos}
	\begin{cases}
		\partial_t \rho (t,u) + (2p-1) \partial_u \Phi (\rho (t,u)) = 0,&\quad t > 0, u > 0,\\
			\partial_t \int_{\R_+} \rho (t,u) du = f(t),&\quad t > 0,\\
		\rho (0,u) = \rho_0 (u),&\quad u \in \mathbb{R}_+,
	\end{cases}
\end{equation}
if  
\begin{itemize}
	\item  for any non-negative test function $H \in C_K^{1,1} ((0,\infty) \times (0,\infty))$ and any constant $c \geq 0$,
	
	\begin{equation}\label{hEposInteg1}
\int_0^\infty \int_0^\infty  \big\{ \partial_t H (t,u) \big|\rho(t,u)-c\big| + (2p-1)\partial_u H(t,u) \big|\Phi (\rho(t,u)) - \Phi(c)\big|\big\}  \,du\,dt \geq 0;
	\end{equation}

   \item for every $A > 0$ and every $T > 0$,
   
   \begin{align}
&   	\lim_{t \rightarrow 0} \int_0^A \big|\rho (t,u) - \rho_0 (u)\big| \,du = 0, 
   \\ \label{hEposBoundary}
&\lim_{u \rightarrow 0} \int_0^{T} \big|(2p-1) \Phi (\rho (t,u) )- f (t)\big| \,dt = 0.
\end{align}
\end{itemize}
\end{definition}

\begin{remark}[Uniqueness of the entropy solution] By Kru{\v z}kov's uniqueness Theorem \cite[Theorem A.2.5.3]{klscaling}, the entropy solution to \eqref{hEneg} is unique. We refer the readers to \cite[Theorem 2.1]{bahadoran2012hydrodynamics}  for the uniqueness of the entropy solution to \eqref{hE0},  and to \cite[Theorem A.1]{landim1996hydrodynamical}\,  for \eqref{hEpos}.
\end{remark}

We adress in Appendix \ref{app:equiv} the equivalence of  Definitions \ref{def:ES2} and \ref{def:sol}. We are now ready to state the main result of this paper.

\subsection{Statements of main results.}
\label{subsection:mainresult}

Fix an initial density profile $\rho_0$ on $\R$, we assume that $\rho_0$ is bounded and Riemann-integrable on any finite segment of $\R$, and we define 
\[\mu_N(d\omega)=\bigotimes_{x\in\Z}\mathcal{P}_{\rho_0(x/N)}(d\omega_x)\]
the product measure with marginal at site $x$ given by the Poisson distribution $\mathcal{P}_{\rho_0(x/N)}$ on $\N$, with parameter $\rho_0(x/N)$.

\medskip

We consider the process $(\omega(t))_{t\geq 0}$ in the hyperbolic scaling, \textit{i.e.}~driven by the \emph{accelerated} generator $N\mathcal{L}_N$, and with initial distribution given by $\mu_N$ on $\Omega$. Denote by $\Prob_{\mu_N}$ the distribution of this process $(\omega(t))_{t\geq 0}$ on $\Omega$, and $\E_{\mu_N}$ the corresponding expectation. We are now ready to state our main result.

\begin{theorem}[Hydrodynamic limit]\label{thmAttrac}
For any compactly supported and  continuous function $H:\R\to\R$, for any $t \geq 0$ and for any $\delta > 0$,
\[
	\lim_{N \rightarrow \infty}  \Prob_{\mu_N}  \pa{\bigg| \frac{1}{N}\sum_{x \in \Z} H \left(\tfrac{x}{N}\right) \omega_x(t) - \int_{\R} H(u) \rho (t,u)\,du \bigg| > \delta }= 0,
\]
where
\begin{enumerate}[(i)]
	\item if $\beta < 0$, then $\rho (t,u)$ is the unique entropy solution to \eqref{hEneg};
	\item if $\beta = 0$, then   $\rho (t,u) = \rho_L (t,u) \mathbb{1} \{u < 0\} +  \rho_R (t,u) \mathbb{1} \{u \geq 0\}$, where $\rho_L$ is the unique entropy solution to \eqref{hEneg} and $\rho_R$ is the unique entropy solution to \eqref{hE0} with 
	
	\begin{equation}
	\label{eq:f} 
	\varrho(t) = R \left(\frac{2p-1}{2p-1+\alpha}\; \Phi \big( \rho_L (t,0)\big)\right),
	\end{equation}
	where the function $R$ is the inverse of $\Phi$ (recall \eqref{eq:phi});
	\item  if $\beta > 0$, then  $\rho (t,u) = \rho_L (t,u) \mathbb{1} \{u < 0\} +  \rho_R (t,u) \mathbb{1} \{u \geq 0\}$, where $\rho_L$ is the unique entropy solution to \eqref{hEneg} and $\rho_R$ is the unique entropy solution to \eqref{hEpos} with $f(t) = 0$.
\end{enumerate}
\end{theorem}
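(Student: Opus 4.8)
The plan is to adapt to the presence of the destruction term $\Lsink$ the coupling-based approach for attractive asymmetric systems due to Rezakhanlou \cite{rezakhanlou91}, as extended to boundary-driven models by Landim \cite{landim1996hydrodynamical}, Bahadoran \cite{bahadoran2012hydrodynamics} and De Masi \emph{et al.}~\cite{demasi2021}; the genuinely new input is the analysis of the dynamics near the origin. First, using the attractiveness hypothesis \eqref{ass:H1} and the basic coupling, the process started from $\mu_N$ is stochastically dominated by a stationary process of law $\nu_{\bar\rho}$ with $\bar\rho$ large, which gives uniform moment bounds on the occupation variables and hence tightness of the empirical measures $\pi^N_t(du)=\frac1N\sum_{x\in\Z}\omega_x(t)\,\delta_{x/N}(du)$ in path space. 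Any limit point is supported on trajectories $\rho(t,u)\,du$ with $\rho\in L^\infty(\R_+\times\R)$, and $\rho(0,\cdot)=\rho_0$ by the law of large numbers for the Poisson product measure; it then remains to identify $\rho$ with the entropy solution described in the statement, since by the uniqueness results recalled after Definition \ref{def:sol} this will upgrade convergence in distribution of the empirical measure to convergence in probability, which is what Theorem \ref{thmAttrac} asserts.

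Next, to pin down $\rho$ away from the origin, I would use, for each constant $c\geq 0$, the basic coupling of $\omega(\cdot)$ with a stationary process $\bar\omega(\cdot)$ of law $\nu_c$. Attractiveness guarantees that the coupling preserves the partial order and, crucially, that the destruction dynamics $\Lsink$ can only erase a discrepancy at the origin, so that the number of discrepancies is a monotone functional --- this is precisely where the study of second-class particles of Section \ref{sec:prelim} comes in. Following Rezakhanlou's argument \cite{rezakhanlou91}, which bypasses the usual one-block/two-block estimates, one then controls the evolution of $\frac1N\sum_x H(x/N)\,\abs{\omega_x(t)-\bar\omega_x(t)}$ under the accelerated generator $N\mc L_N$: the microscopic current attached to $\abs{\omega_x-\bar\omega_x}$ equals $\abs{g(\omega_x)-g(\bar\omega_x)}$ since $g$ is non-decreasing, and it is identified in the limit with $(2p-1)\abs{\Phi(\rho(t,u))-\Phi(c)}$ (recall \eqref{eq:phi}); the remaining bulk contributions have the right sign thanks to attractiveness; and $N\Lsink$ produces only a term localised at $u=0$. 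Sending $N\to\infty$ and ranging over $c\geq 0$ then yields, on $\R\setminus\{0\}$, the Kru{\v z}kov-type entropy inequalities appearing in Definitions \ref{def:ES1}--\ref{def:sol}.

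It remains to treat the origin. The total macroscopic mass destroyed on $[0,t]$ is $\frac1N\int_0^{Nt}\alpha N^\beta g(\omega_0(s))\,ds$, which by the moment bounds of the first step is $O(N^\beta)$ in expectation. When $\beta<0$ it vanishes, so coupling the process with its destruction-free counterpart (both started from $\mu_N$) shows that $\rho$ coincides with the entropy solution of \eqref{hEneg} on all of $\R$. When $\beta\geq 0$ the destroyed mass is no longer negligible, but since $p>\tfrac12$ the discrepancies created at the origin drift to the right, so that $\rho$ restricted to $\{u<0\}$ remains the free entropy solution $\rho_L$ of \eqref{hEneg} (consistently with the fact that, $\Phi$ being non-decreasing, all characteristics and shocks of \eqref{hEneg} move rightward). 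For $\beta=0$, I would then prove a local-equilibrium statement at the origin: the empirical density in a mesoscopic box around $0$ self-averages to the density $\varrho(t)$ imposed by the microscopic mass balance ``incoming current $=$ destruction rate $+$ outgoing current'', namely $(2p-1)\Phi(\rho_L(t,0))=(2p-1+\alpha)\Phi(\varrho(t))$, which is exactly \eqref{eq:f}; combined with a microscopic boundary entropy flux estimate based on Otto's boundary entropy--entropy flux pairs \cite{otto96} (as in \cite{landim1996hydrodynamical}), this gives the boundary inequality \eqref{hE0Integ1} for $\rho_R$ on $\R_+$. For $\beta>0$, the destruction rate $\alpha N^{1+\beta}g(\omega_0)$ dominates the incoming current, forcing the outgoing current to vanish in the limit, i.e.~$\Phi(\rho_R(t,0^+))=0$ and $f\equiv0$ in \eqref{hEpos}; these two cases are carried out in Section \ref{sec:proof}.

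The main obstacle is the analysis at the origin in the critical case $\beta=0$: one has to establish a replacement lemma in a region that is genuinely out of equilibrium, where $\omega_0$ is unbounded, to control the fluctuations of $g(\omega_0(s))$ and show that it self-averages to $\Phi(\varrho(t))$ despite the ongoing destruction, and finally to convert this microscopic mass balance into the weak, Otto-type boundary condition. The bulk part of the argument is by now rather standard, but one must still check carefully that the full dynamics, including $\Lsink$, remains attractive, since this property underlies every estimate above.
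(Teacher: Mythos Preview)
Your overall architecture---tightness via stochastic domination, Kru\v{z}kov entropy inequalities via basic coupling, second-class particles to control the left half-line---matches the paper's. But two genuine gaps separate your sketch from a proof.

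\textbf{The coupling measure for $\beta\geq 0$.} You propose to couple $\omega(\cdot)$ with ``a stationary process $\bar\omega(\cdot)$ of law $\nu_c$''. This cannot be made consistent: $\nu_c$ is invariant for $\Lex$ only, not for the full generator $\mc L_N=\Lex+\Lsink$. If both marginals evolve under $N\mc L_N$ (which is what you need in order that ``$\Lsink$ can only erase a discrepancy''), then $\bar\omega(\cdot)$ is \emph{not} stationary, and you no longer know its macroscopic profile near the origin---precisely where you need it. The paper's way out is to identify explicitly the invariant product measures of the \emph{full} process with destruction (Lemma \ref{lem2} and Remark \ref{remark:invariantm}): these are product laws $\bar\nu_{m(\cdot)}$ with a piecewise profile $m(\cdot)$ that jumps at the origin. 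Choosing the constants so that the right density is $c$ forces the left density to be $R\big(\tfrac{2p-1}{2p-1+\alpha}\Phi(c)\big)$ when $\beta=0$, and coupling with \emph{this} stationary process yields directly the boundary entropy inequality \eqref{hE0Integ1} with the correct boundary datum \eqref{eq:f}. Your proposed route---prove a local-equilibrium/replacement lemma for $g(\omega_0(s))$ and deduce the mass balance---is exactly the hard problem the paper avoids; no such replacement at a single site is available here.

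\textbf{The range $\beta\geq 1$.} Your tightness argument and the microscopic entropy inequality both rely on martingale quadratic variation estimates in which the Glauber contribution is $\mc O(N^{\beta-1})$; for $\beta\geq 1$ these do \emph{not} vanish, and the whole Step~2 of the entropy inequality breaks down (cf.\ the hypothesis $\beta<1$ in Lemmas \ref{lem:microEntIneqn} and \ref{lem:tight}). The paper treats $\beta\geq 1$ by a separate sandwich argument: it couples $\omega^{(\beta)}$ between $\omega^{(1/2)}$ (already handled) and the process $\omega^{(\infty)}$ with instantaneous destruction, and shows that these two have the same hydrodynamic limit. You do not address this regime at all.
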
 

\begin{remark}
Note that from Remark \ref{rem:regularity}, $\varrho$ defined in \eqref{eq:f} is almost everywhere continuous, so that $\rho_R$ is well defined, as given in Definition \ref{def:ES2}.
\end{remark}

\begin{remark}[Linear case]\label{rem:lincase} 
If we assume $g (k) = k$ for all $k \geq 0$, namely the \emph{linear case}, then the solution can be explicitly computed. To that aim, let us introduce $\widetilde{\alpha}=\widetilde{\alpha}(\beta)$ as

	\begin{equation}
		\label{eq:Defalphat}
		\widetilde{\alpha}=
		\begin{cases}
			1& \mbox{ if } \beta>0,\\
			\alpha/(\alpha+2p-1)& \mbox{ if } \beta=0,\\
			0& \mbox{ if } \beta<0.
		\end{cases}
	\end{equation}
	Then, $\rho(t,u)$ is a weak solution to the formal equation 
	\[
		\begin{cases}
			\partial_t \rho (t,u) + (2p-1) \partial_u \rho (t,u) = -\widetilde{\alpha} \rho(t,0)\delta_0(u),&\quad t > 0, u\in \R,\\
			\rho (0,\cdot) = \rho_0, & \quad u \in \R,
		\end{cases}
\]
and is given, for any $(t,u) \in \R_+ \times \R$,  by: 

	\begin{equation}
		\label{eq:Defrho}
		\rho (t,u) = \big(1 - \widetilde{\alpha}\; \mathbb{1}\{0\leqslant u <(2p-1)t\}\big)\; \rho_0 (u - (2p-1)t).
	\end{equation}	
We provide in Section \ref{app} an alternative (short) proof of Theorem \ref{thmAttrac} in the linear case, using duality tools.
\end{remark}

The rest of the paper is devoted to the proof of Theorem \ref{thmAttrac}.

\section{Preliminary results} \label{sec:prelim}

In this section we give some preliminary results  which will be crucial to prove Theorem \ref{thmAttrac}. We start in Section \ref{sec:invariantm} by defining the invariant measures for the zero-range process with destruction at the origin. Then, in Section \ref{sec:secondclass} we  introduce the notion of \emph{second-class particles}.  In Section \ref{sebsection:coupling} we prove an entropy inequality at the microscopic level  using  coupling techniques.

\subsection{Invariant measures for the process with destruction}
\label{sec:invariantm} For a function $m: \mathbb{Z} \rightarrow \mathbb{R}_+,$ let $\bar{\nu}_{m (\cdot)}$ be the product probability measure on $\Omega$ with marginals given by
\begin{equation}
	\label{eq:mesinv}
	\bar{\nu}_{m (\cdot)}(\omega_x = k) = \bar{\nu}_{m(x)}(\omega_x = k), \quad  x\in\Z, \,k \in\N.
\end{equation}
In order for $\bar{\nu}_{m (\cdot)}$ to be invariant for the whole process generated by $\mathcal{L}_N$, the function $m$ needs to solve some discrete system, as stated by the following lemma: 

\begin{lemma}\label{lem2}
Assume that $m:\Z\to\R_+$ is solution to 
\begin{equation}\label{a1}
\begin{cases}
p m_{x-1}+ (1-p) m_{x+1} - m_x = 0 &  \mbox{ for } x \neq 0,\\
p m_{-1} + (1-p) m_1 - (1+\alpha N^{\beta}) m_0 = 0.
\end{cases}
\end{equation}
Then the product measure $\bar{\nu}_{m (\cdot)}$  is invariant for the dynamics generated by ${\mathcal L}_N$.
\end{lemma}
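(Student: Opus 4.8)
The plan is to verify directly that $\bar\nu_{m(\cdot)}$ is invariant by checking that $\int \mathcal L_N f \, d\bar\nu_{m(\cdot)} = 0$ for every local Lipschitz function $f$, splitting $\mathcal L_N = \Lex + \Lsink$ and exploiting the product structure of the measure together with the key identity $\bar\nu_\zeta(\omega_x=k)\, g(k) = \zeta\, \bar\nu_\zeta(\omega_x = k-1)$, which is immediate from the definition of the marginals (the $g(k)!$ in the denominator telescopes). This identity means that under $\bar\nu_{m(\cdot)}$, ``removing a particle from site $x$ and weighting by the jump rate $g(\omega_x)$'' is the same as ``multiplying by $m(x)$ and not weighting'', which is exactly what linearizes the computation.

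The main computation is as follows. For the bulk generator, write
\[
\int \Lex f \, d\bar\nu_{m(\cdot)} = \sum_{x\in\Z} \int g(\omega_x)\big\{ p\,\nabla_{x,x+1}f(\omega) + (1-p)\,\nabla_{x,x-1}f(\omega)\big\}\, d\bar\nu_{m(\cdot)}(\omega).
\]
In each term, perform the change of variables $\omega \mapsto \omega^{x,x+1}$ (resp.\ $\omega^{x,x-1}$) in the part involving $f(\omega^{x,x+1})$; using the product structure and the identity above, the Radon--Nikodym factor produced is $m(x)/m(x+1)$ (resp.\ $m(x)/m(x-1)$) times the rate $g$ evaluated at the shifted configuration. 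After relabeling the summation index, one collects for each site $y$ a coefficient multiplying $\int g(\omega_y)\, f(\omega)\, \tfrac{1}{g(\omega_y)}\cdots$; carefully bookkeeping (this is the routine part I will not grind through here) shows that the net contribution at each edge reorganizes into $\big(p\, m_{x-1} + (1-p)\, m_{x+1} - m_x\big)$ times a common expression, with an anomalous extra term $-\alpha N^\beta m_0$ localized at the origin. Similarly, $\int \Lsink f\, d\bar\nu_{m(\cdot)} = \alpha N^\beta \int g(\omega_0)\{f(\omega^{(0)}) - f(\omega)\}\, d\bar\nu_{m(\cdot)}$; changing variables in the $f(\omega^{(0)})$ term produces a factor $m(0)$, and this term exactly matches the $+\alpha N^\beta m_0$ needed to complete the bulk expression at $x=0$ into $p\, m_{-1} + (1-p)\, m_1 - (1+\alpha N^\beta)\, m_0$. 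Under hypothesis \eqref{a1}, every coefficient vanishes, so $\int \mathcal L_N f\, d\bar\nu_{m(\cdot)} = 0$.

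A cleaner way to organize the bookkeeping, which I would actually use to avoid sign errors, is to check the \emph{detailed-balance-like} cancellation edge by edge: for the pair of sites $\{x,x+1\}$, the forward rate weighted by the measure is $\bar\nu_{m(\cdot)}(\omega)\, p\, g(\omega_x)$ and, after the $\zeta$-identity, the contribution of the reverse move $\omega^{x,x+1}\mapsto \omega$ carries weight $\bar\nu_{m(\cdot)}(\omega^{x,x+1})\,(1-p)\, g(\omega_{x+1}+1) = \bar\nu_{m(\cdot)}(\omega)\,\tfrac{m(x)}{m(x+1)}\,(1-p)\, g(\omega_{x+1}+1)$; summing the in-flow and out-flow at each configuration and using that the marginals are stationary for the one-site birth-death chain reduces everything to the linear recursion for $m$. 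The destruction term at the origin plays the role of an extra ``exit'' that must be compensated by the modified equation at $x=0$, which is precisely the second line of \eqref{a1}.

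\textbf{Main obstacle.} No genuinely hard analytic point arises here: existence/uniqueness of $m$ is not claimed (only the implication ``if $m$ solves \eqref{a1} then $\bar\nu_{m(\cdot)}$ is invariant''), and $m$ grows at most geometrically so $\bar\nu_{m(\cdot)}$ is well-defined provided $m(x) < \zeta^*$ for all $x$ (an implicit standing assumption). The one delicate point is \emph{justifying the termwise manipulations on the infinite lattice}: the sum over $x\in\Z$ must be shown to converge absolutely and the change of variables performed term by term. Since $f$ is local, $\nabla_{x,y}f \equiv 0$ except for finitely many edges, so the sum is in fact finite and this issue evaporates; the only care needed is that $f$ Lipschitz plus $\bar\nu_{m(\cdot)}$ having all moments (geometric-type tails) makes every integral finite. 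Hence the proof is essentially the algebraic identity described above, and I expect the write-up to be short.
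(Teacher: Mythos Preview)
Your proposal is correct and follows essentially the same route as the paper: both arguments use the change-of-variables identity $g(\omega_x)\,\bar\nu_{m(\cdot)}(d\omega)=m(x)\,\bar\nu_{m(\cdot)}(d\omega^{x,\cdot})$ coming from the $g(k)!$ structure, reorganize the bulk sum into the coefficient $p\,m_{x-1}+(1-p)\,m_{x+1}-m_x$ multiplying $\int \tfrac{g(\omega_x)}{m_x} f\,d\bar\nu_{m(\cdot)}$, and then match the leftover at $x=0$ against the destruction term. Two small points to tidy up in your write-up: the ``anomalous $-\alpha N^\beta m_0$'' you describe does not come from $\Lex$ (which knows nothing about $\alpha$) but rather from $\Lsink$, so the signs in your narrative are flipped; and the paper handles the residual $\alpha N^\beta m_0 \int f\,d\bar\nu_{m(\cdot)}$ produced by the change of variables in $\Lsink$ by reducing w.l.o.g.\ to mean-zero $f$, a step you should make explicit.
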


\begin{remark}\label{remark:invariantm}
$(i)$ If  $p = 1$, namely the \emph{totally asymmetric case}, the solution to \eqref{a1} is
\[
			m_x =
	\begin{cases}
 m_-,\quad \text{if } x \leq -1,\\
	 m_+,\quad \text{if } x \geq 0,
	\end{cases}
\]
	where $m_-,\,m_+$ are non-negative and satisfy $m_- = (1+\alpha N^{\beta}) m_+$.  \\
$(ii) $ If $\frac12 < p < 1$, then any solution to \eqref{a1} has the following form:
\[
	m_x = 
	\begin{cases}
	\displaystyle c_1\; \tfrac{p^x}{(1-p)^x} + c_2 &\quad \text{if $x \leq 0$}, \vphantom{\bigg(}\\
	\displaystyle c_3\; \tfrac{p^x}{(1-p)^x} + c_4 &\quad \text{if $x \geq 0$},\\
	\end{cases}
\]
	where $\{c_i\}_{1 \leq i \leq 4}$ are  real numbers  such that $m_x \geq 0$ for all $x \in \bb{Z}$, and moreover they satisfy
\[	\begin{cases}
	c_1 + c_2 = c_3 + c_4,\\
	c_1 (1-p) + c_2 p + c_3 p + c_4 (1-p)  = (1 + \alpha N^{\beta}) (c_1 + c_2).
	\end{cases}
\]
Solving the above equations, we have
\[
		\begin{cases}
			c_3 = c_1 + \frac{\alpha N^{\beta}}{(2p-1)} (c_1 + c_2), \vphantom{\bigg(}\\
			c_4 = c_2 - \frac{\alpha N^{\beta}}{(2p-1)} (c_1 + c_2).
		\end{cases}
\]
\end{remark}

\begin{proof}[Proof of Lemma \ref{lem2}]
	We need to show that for any  local function $f$ such that 	$\int f d\bar\nu_{m(\cdot)} = 0$, we have
\[
	\int {\mathcal L}_N f (\omega) \,\bar\nu_{m(\cdot)} (d\omega) = 0.
\]
	Direct calculations show that the left hand side equals
	\begin{multline}
	\int \sum_{x\in\Z} \Big\{p g(\omega_x) \big( f (\omega^{x,x+1}) - f (\omega) \big) + (1-p)g(\omega_{x+1}) \big( f (\omega^{x+1,x}) - f (\omega) \big)\Big\} \,\bar\nu_{m(\cdot)} (d\omega)\\
	+ \int  \alpha N^\beta g(\omega_0) \big( f (\omega^{(0)}) - f (\omega) \big) \,\bar\nu_{m(\cdot)} (d\omega).\label{a3}
	\end{multline}
	Since, by change of variables,
	\begin{align*}
	m_{x+1}g(\omega_x) \,\bar\nu_{m(\cdot)} (d\omega) &= m_x g(\omega_{x+1}+1) \,\bar\nu_{m(\cdot)}  (d\omega^{x,x+1}),\\
	m_x g(\omega_{x+1}) \,\bar\nu_{m(\cdot)}(d\omega) &= m_{x+1} g(\omega_x+1) \,\bar\nu_{m(\cdot)} (d\omega^{x+1,x}),
	\end{align*}
	making the transformations $\omega^{x,x+1} \mapsto \omega$ and $\omega^{x+1,x} \mapsto \omega$ respectively, we  rewrite the first line of \eqref{a3} as
	\begin{align*}
	\int \sum_{x\in\Z} \bigg\{  \frac{(1-p) m_{x+1} - pm_{x}}{m_{x}} g(\omega_x) -&  \frac{(1-p) m_{x+1} - pm_{x}}{m_{x+1}} g(\omega_{x+1}) \bigg\} f (\omega) \,\bar\nu_{m(\cdot)}  (d\omega)\\
	= &\int \sum_{x \in \Z} \frac{(1-p) m_{x+1} + pm_{x-1}-m_{x}}{m_{x}} g(\omega_x) f (\omega) \,\bar\nu_{m(\cdot)} (d\omega)\\
	= &\int \alpha N^\beta g(\omega_0) f (\omega) \,\bar\nu_{m(\cdot)} (d\omega).
	\end{align*}
In the first identity above we have reindexed the second sum, whereas in the second identity, we have used that $m$ is a  solution to \eqref{a1}. Similarly, using the change of variable 

\[
	g(\omega_0)\,\bar\nu_{m(\cdot)}  (d\omega) = m_0  \,\bar\nu_{m(\cdot)}  (d\omega^{(0)}),
\]
and using the fact that $f$ has mean $0$ w.r.t.~$\bar\nu_{m(\cdot)}$, the second line of \eqref{a3} equals
\[
\int \alpha N^\beta \Big(m_0 - g(\omega_0)\Big) f (\omega) \,\bar\nu_{m(\cdot)} (d\omega) = - \int \alpha N^\beta g(\omega_0) f (\omega) \,\bar\nu_{m(\cdot)} (d\omega).
\]
	Therefore \eqref{a3} vanishes. This proves the result.
\end{proof}

\subsection{Second-class particles.} 
\label{sec:secondclass} 
We now consider  an auxiliary process $(\zeta(t))_{t\geqslant 0}$ composed of \emph{second-class particles}. Instead of killing a particle at the origin, we turn the killed particle into a \emph{second-class particle}, as explained below. The process $(\omega (t), \,\zeta (t))_{t \geq 0}$ evolves as follows:
\begin{itemize}
	\item at site $x\neq 0$, an $\omega$-particle jumps at rate $N g (\omega_x)$, and a $\zeta$-particle jumps at rate \[N (g (\omega_x + \zeta_x) - g (\omega_{x}))\; ;\]
	
	\item when a particle jumps, it jumps to the right with probability $p$ and to the left w.p.~$1-p$;
	
	\item at site $x=0$, an $\omega$-particle is transformed into a $\zeta$-particle at rate $\alpha N^{1+\beta} g (\omega_0)$.
\end{itemize}
It is easy to see that the process $(\omega (t) + \zeta (t))_{t \geq 0}$ evolves as the usual zero-range process according to the generator $N \Lex$, and $(\omega (t))_{t \geq 0}$ according to the generator $N \mathcal{L}_N$. Denote by $\overline{ \mathbb{P}}_{\mu_N}$ the law of the process $(\omega (t),\zeta (t))$, with no second-class particle at the initial time, \textit{i.e.}~$(\omega(0),\zeta(0)) \sim \mu_N \otimes \delta_{{\emptyset}}$ where $\delta_{\emptyset}$ is the Dirac measure on the empty configuration. Denote  by $\overline{\bb{E}}_{\mu_N}$ the corresponding expectation.

\medskip

We first state a lemma to bound the number of second-class particles up to time $t$. The following lemma is sufficient to prove the hydrodynamic limit in the case $\beta < 0$.

\begin{lemma}\label{lem:secondclass}
Assume that the initial distribution $\mu_N$ is stochastically dominated by $\nu_{\rho^\star}\,$ for some density $\rho^\star\in(0,+\infty)$.  Let $K_t:=\sum_{x\in\Z}\zeta_x(t)$ be the number of second-class particles up to time $t$.  Then  there exists a finite constant $C>0 $ independent of $N$ such that 
\[\overline{\bb{E}}_{\mu_N} [K_t] \leq C t\min \{\alpha N^{1+\beta}, \,N\}.\]
\end{lemma}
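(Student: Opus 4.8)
The plan is to derive a differential inequality for $\overline{\bb{E}}_{\mu_N}[K_t]$ and integrate it. First I would write down the action of the coupled generator on the observable $K = \sum_{x} \zeta_x$. In the coupled dynamics, the only mechanism that *creates* second-class particles is the source dynamics at the origin, which transforms an $\omega$-particle into a $\zeta$-particle at rate $\alpha N^{1+\beta} g(\omega_0)$; the bulk jumps of $\zeta$-particles conserve $K$. Hence, if $L$ denotes the generator of the coupled process $(\omega(t),\zeta(t))$ accelerated by $N$, then
\[
L K(\omega,\zeta) \;=\; \alpha N^{1+\beta}\, g(\omega_0).
\]
By Dynkin's formula, $\overline{\bb{E}}_{\mu_N}[K_t] = \int_0^t \alpha N^{1+\beta}\, \overline{\bb{E}}_{\mu_N}[g(\omega_0(s))]\, ds$. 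Since $g$ is Lipschitz with $g(0)=0$, we have $g(k) \leq a_0 k$, so $\overline{\bb{E}}_{\mu_N}[g(\omega_0(s))] \leq a_0\, \overline{\bb{E}}_{\mu_N}[\omega_0(s)]$, and it remains to bound the latter uniformly in $s$ and $N$.

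The first bound $C t\,\alpha N^{1+\beta}$ then follows provided one controls $\overline{\bb{E}}_{\mu_N}[\omega_0(s)]$ by a constant. This is where attractiveness and the stochastic domination hypothesis come in: the marginal process $(\omega(t))_{t\geq 0}$ has generator $N\mc L_N$, which only *removes* particles relative to the free zero-range dynamics $N\Lex$. Using the basic coupling for attractive zero-range processes, $(\omega(t))$ is stochastically dominated by the free zero-range process started from $\mu_N$, which in turn (again by attractiveness and monotonicity of the family $\nu_\rho$) is dominated for all times by $\nu_{\rho^\star}$ since $\mu_N \preceq \nu_{\rho^\star}$ and $\nu_{\rho^\star}$ is invariant for $N\Lex$. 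Therefore $\overline{\bb{E}}_{\mu_N}[\omega_0(s)] \leq E_{\nu_{\rho^\star}}[\omega_0] = R(\Phi(\rho^\star)) = \rho^\star < \infty$ for all $s$, giving the bound $C t\,\alpha N^{1+\beta}$ with $C = a_0 \rho^\star$.

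For the second bound $C t N$, I would instead argue directly: at any time, the total number of second-class particles created per unit macroscopic time cannot exceed the total rate of particle destruction in the original process $(\omega(t))$, which is $\alpha N^{1+\beta} g(\omega_0(t))$; but equivalently, since each second-class particle corresponds to a particle that *left* the origin via the source, one can bound $K_t$ by the net particle flux through the origin. A cleaner route is to note that the number of second-class particles created up to time $t$ is at most the number of particles that have ever occupied the origin and been acted on by the source — and since in the coupled picture $(\omega(t)+\zeta(t))$ is just the free zero-range process, the expected occupation $\overline{\bb{E}}_{\mu_N}[(\omega_0+\zeta_0)(s)]$ is still bounded by $\rho^\star$; combined with the fact that a $\zeta$-particle at the origin jumps away at rate at least $N(g(\omega_0+\zeta_0)-g(\omega_0))$, one obtains a self-improving estimate. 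The simplest version: from $\overline{\bb{E}}_{\mu_N}[K_t] = \alpha N^{1+\beta}\int_0^t \overline{\bb{E}}_{\mu_N}[g(\omega_0(s))]\,ds$ one also has, by stationarity-type considerations on the $\zeta$-occupation at the origin, that $\overline{\bb{E}}_{\mu_N}[\zeta_0(s)]$ stays $O(1)$, and the creation rate $\alpha N^{1+\beta} \overline{\bb{E}}_{\mu_N}[g(\omega_0)]$ must be matched (up to the monotone growth of $K$) by the escape rate $N\,\overline{\bb{E}}_{\mu_N}[g(\omega_0+\zeta_0) - g(\omega_0)] = O(N)$; feeding this back bounds $\partial_t \overline{\bb{E}}_{\mu_N}[K_t]$ by $CN$.

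The main obstacle is the second estimate $C t N$ when $\alpha N^{1+\beta}$ is large (i.e. $\beta > -1$), since then the naive Dynkin bound $\alpha N^{1+\beta}\int_0^t \overline{\bb{E}}[g(\omega_0(s))]\,ds$ is too crude — one genuinely needs that when the source fires very fast, the origin is quickly depleted of $\omega$-particles and the $\zeta$-particles pile up only until they can diffuse away at rate $O(N)$, so the effective creation rate saturates at $O(N)$. Making this rigorous requires a careful bookkeeping argument on the occupation variable $\omega_0(s)$ under fast killing, or equivalently a comparison showing $\alpha N^{1+\beta}\,\overline{\bb{E}}_{\mu_N}[g(\omega_0(s))] \leq C N$ uniformly; I expect this to be done by writing a balance equation for $\overline{\bb{E}}_{\mu_N}[\omega_0(s) + \zeta_0(s)]$ (which is the occupation of the free process, hence $O(1)$) and for $\overline{\bb{E}}_{\mu_N}[\omega_0(s)]$ separately, and extracting the killing flux as the difference.
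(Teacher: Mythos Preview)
Your plan for the first bound $\overline{\bb{E}}_{\mu_N}[K_t] \leq a_0 \rho^\star \alpha N^{1+\beta} t$ is exactly the paper's: Dynkin's formula gives $\overline{\bb{E}}_{\mu_N}[K_t] = \alpha N^{1+\beta}\int_0^t \overline{\bb{E}}_{\mu_N}[g(\omega_0(s))]\,ds$, and attractiveness together with the stochastic domination hypothesis bounds the integrand by $E_{\nu_{\rho^\star}}[g(\omega_0)]\leq a_0 \rho^\star$.

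For the second bound $\overline{\bb{E}}_{\mu_N}[K_t] \leq C N t$, your proposed route (a balance equation at the origin) is \emph{different} from the paper's, but it works and is in fact more direct. Concretely, applying Dynkin's formula to $\omega_0$ alone yields
\[
\overline{\bb{E}}_{\mu_N}[\omega_0(t)]-\overline{\bb{E}}_{\mu_N}[\omega_0(0)] = N\!\int_0^t\!\big(p\,\overline{\bb{E}}_{\mu_N}[g(\omega_{-1})]+(1-p)\,\overline{\bb{E}}_{\mu_N}[g(\omega_{1})]-\overline{\bb{E}}_{\mu_N}[g(\omega_{0})]\big)\,ds - \overline{\bb{E}}_{\mu_N}[K_t],
\]
so $\overline{\bb{E}}_{\mu_N}[K_t]\leq \rho^\star + a_0\rho^\star N t$, which suffices. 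The paper instead bounds $K_t$ pathwise by the number of first-class particles that visit the origin before time $t$, and controls this count by dominating the displacement of a single particle by a rate-$a_0 N$ Poisson process and using a large-deviation bound on the contribution of particles initially far from the origin. Your approach avoids the trajectory estimate entirely at the price of a harmless additive constant $\rho^\star$; the paper's argument is more pathwise in spirit and does not rely on the exact Dynkin identity $\overline{\bb{E}}_{\mu_N}[K_t]=\alpha N^{1+\beta}\int_0^t \overline{\bb{E}}_{\mu_N}[g(\omega_0)]\,ds$.

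One caveat on presentation: your write-up for the second bound drifts through several heuristics before landing on the correct balance equation in the last sentence. The intermediate heuristics are not quite right as stated --- for instance, the ``escape rate $N\,\overline{\bb{E}}_{\mu_N}[g(\omega_0+\zeta_0)-g(\omega_0)]$'' is the outgoing jump rate of $\zeta$-particles at the origin, but this by itself does not cap the creation rate without also controlling $\zeta$-arrivals at $0$. The clean argument is the Dynkin balance for $\omega_0$; lead with it and drop the rest.
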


\begin{proof}
Since $K_t - \int_0^t \alpha N^{1+\beta} g (\omega_0 (s))\,ds$ is a mean zero martingale and since the process $(\omega(t))_{t\geqslant 0}$ is attractive and stochastically dominated by $\nu_{\rho^\star}$, 
\[
		\overline{\bb{E}}_{\mu_N} [K_t]  \leq \alpha N^{1+\beta} t {\rm E}_{\nu_{\rho^\star}} [g (\omega_0)] \leq a_0   \rho^\star \alpha N^{1+\beta} t.
\]
where $a_0$ is the Lipschitz constant of $g$, introduced in \eqref{eq:lipg}, which satisfies $g(k)\leq a_0 k$. 
We now show that $\overline{\bb{E}}_{\mu_N} [K_t] \leq C N t$ for some constant $C > 0$. Observe that $K_t$ is bounded by the number of particles visiting the origin up to time $t$. Let $Y_t$ be a homogeneous Poisson point process on $ \R$ with intensity $a_0 N$. Then the displacement of a typical first-class particle up to time $t$ is bounded stochastically by $Y_t$.  Dividing $\bb{Z}$ into the unions of $[j a_0 N t, (j+1) a_0 N t], \, j \in \bb{Z}$, we have 
\[
	\overline{\bb{E}}_{\mu_N}\, [K_t] \leq  \sum_{j = 3}^\infty 2 a_0 \rho^\star N t \overline{\bb{P}}_{\mu_N} [Y_t > j a_0 N t] + 6 a_0 \rho^\star N t.
\]
Above, the first term on the right hand side comes from the total number of particles which are initially at $\Z \backslash [-3a_0Nt,3a_0Nt]$ and visit the origin before time $t$, and the second term from the total initial number of particles in the interval $[-3a_0Nt,3a_0Nt]$.   Since $\overline{\bb{P}}_{\mu_N} [Y_t > j a_0 N t]  \leq \exp \{- a_0 (j-2) N t\}$, the first term on the right hand side vanishes as $N \rightarrow \infty$. This is enough to complete the proof.
\end{proof}

The following lemma will be used to identify the limiting density profile  to the left of the origin in the case $\beta \geq 0$.

\begin{lemma}\label{lem:NumberSec}
For any $t \geq  0$ and for any $\beta\in \R$,
	\begin{equation}
		\lim_{N \rightarrow \infty} \overline{\bb{E}}_{\mu_N}\, \bigg[ \frac{1}{N} \sum_{x \leq 0} \zeta_x (t)\bigg] = 0.
	\end{equation}
\end{lemma}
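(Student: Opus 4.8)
The plan is to track how second-class particles created at the origin disperse, and to show that the ones ending up in the negative half-line $\{x \le 0\}$ at time $t$ form a macroscopically negligible population. First I would recall from the coupling that, under $\overline{\bb P}_{\mu_N}$, the $\zeta$-particles are created exclusively at $x=0$ at rate $\alpha N^{1+\beta} g(\omega_0(s))$, and once created they perform the same asymmetric random walk as the first-class particles (with the attractive jump rate $N(g(\omega+\zeta)-g(\omega))$), which is \emph{biased to the right} since $p>\frac12$. The key point is that a second-class particle contributes to $\frac1N\sum_{x\le 0}\zeta_x(t)$ only if, after being created at the origin at some time $s\le t$, it fails to cross back to (or stay on) the nonnegative axis — but the drift pushes it rightward, so this event is exponentially unlikely for all but the $O(N)$ particles created within $O(Nt)$ of time $t$, which contribute $O(1)$ and hence $o(N)$ after dividing by $N$.

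The main technical step is a domination argument. Using attractiveness and the stochastic domination of $\mu_N$ by $\nu_{\rho^\star}$ (as in the proofs of Lemmas \ref{lem:secondclass}), I would bound the rate of creation of $\zeta$-particles: the expected number of second-class particles created up to time $t$ is at most $a_0\rho^\star \alpha N^{1+\beta} t$ when $\beta\le 0$ — wait, more carefully: in the regime $\beta<0$ Lemma \ref{lem:secondclass} already gives $\overline{\bb E}_{\mu_N}[K_t]\le Ct\,\alpha N^{1+\beta}=o(N)$, so the whole sum $\frac1N\sum_{x\le 0}\zeta_x(t)\le \frac{K_t}{N}\to 0$ trivially. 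The substance of the lemma is therefore the case $\beta\ge 0$, where $K_t$ is of order $N^{1+\beta}\gg N$ and one genuinely needs the rightward transport to kill the contribution on $\{x\le 0\}$. For this I would introduce, for a second-class particle created at time $s$, the displacement variable and bound it below stochastically by a rate-$cN$ Poisson-type random walk with rightward drift of order $N$: the probability that such a walk, run for time $t-s$, is still $\le 0$ is at most $\exp\{-cN(t-s)\}$ for a suitable $c>0$ depending on $p$ and $a_0$. Then
\[
\overline{\bb E}_{\mu_N}\!\Big[\sum_{x\le 0}\zeta_x(t)\Big] \;\le\; \int_0^t \alpha N^{1+\beta}\,\overline{\bb E}_{\mu_N}[g(\omega_0(s))]\; e^{-cN(t-s)}\,ds \;\le\; a_0\rho^\star\,\alpha N^{1+\beta}\int_0^t e^{-cN(t-s)}\,ds \;\le\; \frac{a_0\rho^\star\alpha}{c}\,N^{\beta},
\]
so that after dividing by $N$ the bound is $O(N^{\beta-1})\to 0$. (If $\beta\ge 1$ this crude bound is not enough, so for large $\beta$ I would instead combine with the $O(N)$ bound on the number of particles \emph{ever visiting the origin} from Lemma \ref{lem:secondclass}, restricting attention to $\zeta$-particles created in $[t-\delta,t]$ and crossed-back ones, and let $\delta\to 0$ after $N\to\infty$.)

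The one subtlety — and the step I expect to be the main obstacle — is making the stochastic lower bound on a tagged second-class particle's position rigorous: the jump rate of a $\zeta$-particle, $N(g(\omega_x+\zeta_x)-g(\omega_x))$, depends on the environment and can be small or large, so it is not a priori a nice random walk. The standard resolution is to use the \emph{graphical construction / basic coupling}: a second-class particle moves precisely along the "excess" Poisson clocks, and one can couple the collection of second-class trajectories with a system of independent asymmetric random walks of rate $a_0 N$ each (this is legitimate because $g$ is Lipschitz with constant $a_0$, so $g(\omega_x+\zeta_x)-g(\omega_x)\le a_0\zeta_x$, giving per-particle rate $\le a_0 N$), which dominates the relevant "return to $\le 0$" events in the right direction. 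Once this coupling is in place, the exponential tail estimate $\overline{\bb P}[Y_{t-s}\le 0]\le e^{-cN(t-s)}$ for a rate-$a_0N$ walk with rightward bias $2p-1>0$ is a routine large-deviations computation, and the display above concludes the proof. I would organize the write-up as: (1) reduce to $\beta\ge 0$; (2) set up the graphical coupling and the dominating random walks; (3) derive the exponential return estimate; (4) integrate against the creation rate and divide by $N$.
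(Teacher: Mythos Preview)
Your plan contains a genuine gap at the exponential--tail step. You want a stochastic \emph{lower} bound on the displacement of a tagged second-class particle, so that $\overline{\bb P}\big(X_i(t)\le 0\,\big|\,\text{created at }s\big)\le e^{-cN(t-s)}$. But the only inequality you invoke is the Lipschitz bound $g(\omega_x+\zeta_x)-g(\omega_x)\le a_0\,\zeta_x$, which gives an \emph{upper} bound $a_0 N$ on the per-particle jump rate. A faster comparison walk escapes to the right \emph{more} quickly than the true particle, so this coupling bounds the probability of $\{X_i(t)\le 0\}$ from \emph{below}, not above. To go in the direction you need, a uniform lower bound on the jump rate of a second-class particle would be required, and none is available under hypothesis~(H1): for instance, if $g(k)=\mathbb{1}\{k\ge 1\}$, a $\zeta$-particle at a site with $\omega_x\ge 1$ has jump rate zero. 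Since the set $\{x\le 0\}$ contains the origin itself, a second-class particle that simply fails to move contributes to the sum, and its individual probability of doing so need not decay at all. Your integrated bound $O(N^{\beta})$ therefore does not follow, and the patch you sketch for $\beta\ge 1$ is not developed enough to rescue the argument.

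The paper circumvents the jump-rate issue by two elementary moves. First, split at $-\sqrt N$: since $\overline{\bb E}_{\mu_N}[\zeta_x(t)]\le\rho^\star$ by attractiveness, the contribution of $\{-\sqrt N\le x\le 0\}$ is at most $\rho^\star N^{-1/2}$. Second, for the event $\{X_i(t)\le -\sqrt N\}$, condition on the number $j$ of jumps the tagged particle makes, \emph{without} any control on its rate: if $j<\sqrt N$ the event is impossible, and for $j\ge\sqrt N$ a Chebyshev bound on the biased discrete-time walk $S_j$ gives $\overline{\bb P}(S_j\le-\sqrt N)\le j/((2p-1)j+\sqrt N)^2\le CN^{-1/2}$. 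Combining this with the \emph{uniform} estimate $\overline{\bb E}_{\mu_N}[K_t]\le CtN$ from Lemma~\ref{lem:secondclass}---valid for every $\beta$, which you mention only for $\beta\ge 1$ but is the key to avoiding any case distinction---yields $\frac1N\overline{\bb E}_{\mu_N}[\sum_{x\le-\sqrt N}\zeta_x(t)]\le CN^{-1/2}$. No lower bound on the second-class jump rate, no exponential estimate, and no separation into regimes of $\beta$ are needed.
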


\begin{proof}
We only need to prove 
\[
	\lim_{N \rightarrow \infty} \overline{\bb{E}}_{\mu_N}\, \bigg[ \frac{1}{N} \sum_{x \leq - \sqrt{N}} \zeta_x (t)\bigg] = 0
\]
since by attractiveness of the process, we have 
\[\overline{\bb{E}}_{\mu_N}\, \bigg[ \frac1N \sum_{ -\sqrt{N} \leq x \leq 0} \zeta_x (t)\bigg]  \leq \rho^\star N^{-1/2}.\] Recall that $K_t$ is the number of second-class particles at time $t$.
Denote the positions of the second-class particles by $X_ i(t),$ for $ 1 \leq i \leq K_t$. Then \[\frac1N \sum_{ x \leq -\sqrt{N}} \zeta_x (t) =\frac1N \sum_{i=1}^{K_t} \mathbb{1} \{X_i (t) \leq - \sqrt{N}\}.\] Let $S_k$ be a discrete time simple random walk which jumps to the right with probability $p$ and to the left with probability  $1-p$. Then

\begin{align*}
\overline{\bb{P}}_{\mu_N} \big(X_1 (t) \leq - \sqrt{N}\big)  &= \sum_{j = 0}^\infty \overline{\bb{P}}_{\mu_N}\, (X_1 (t) \text{\, jumps $j$ times up to time $t$}) \times \overline{\bb{P}}_{\mu_N} (S_j \leq - \sqrt{N}) \\
&\leq \sup_{j \geq \sqrt{N}} \overline{\bb{P}}_{\mu_N} (S_j \leq - \sqrt{N}) \leq \sup_{j \geq \sqrt{N}}  \frac{j}{((2p-1) j+\sqrt{N})^2} \leq C N^{-1/2}
\end{align*}
for some finite constant $C$ independent of $N$. Together with Lemma \ref{lem:secondclass}, we have

\begin{align*}
\overline{\bb{E}}_{\mu_N} \bigg[\frac{1}{N} \sum_{i=1}^{K_t} \mathbb{1} \{X_i (t) \leq - \sqrt{N}\} \bigg] \leq  C  N^{-3/2} \overline{\bb{E}}_{\mu_N} [K_t] \leq C  N^{-1/2}.
\end{align*}
This completes the proof.
\end{proof}

 Since the process $(\omega (t) + \zeta (t))_{t\geqslant 0}$ is the usual asymmetric attractive zero-range process, we conclude this paragraph with a well-known result concerning the hydrodynamic limit of  $(\omega (t) + \zeta (t))_{t\geqslant 0}$, which has been proved in \cite{rezakhanlou91}.

\begin{theorem}[{\cite[Theorem 1.3]{rezakhanlou91}}]
\label{thm:HydroZrp}
Suppose the initial density profile $\rho_0$ is bounded. For any compactly supported and  continuous function $H:\R\to\R$, for any $t \geq 0$,  $\delta > 0$, $\beta\in\R$,
\[
	\lim_{N \rightarrow \infty}  \overline{\bb{P}}_{\mu_N} \pa{ \bigg|\frac{1}{N}\sum_{x \in \Z} H \left(\tfrac{x}{N}\right)( \omega_x (t) + \zeta_x (t)) - \int_{\R} H(u) \rho (t,u)\,du \bigg|> \delta }= 0,
\]
where $\rho (t,u)$ is the unique  entropy solution to \eqref{hEneg}.
\end{theorem}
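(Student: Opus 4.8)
The plan is to note first that, although this result is due to Rezakhanlou, its proof rests on precisely the coupling and entropy tools reused throughout the paper, so it is worth outlining. Set $\eta := \omega + \zeta$; by construction $(\eta(t))_{t\geq 0}$ is the usual asymmetric zero-range process generated by $N\Lex$, started from $\eta(0)=\omega(0)\sim\mu_N$. It therefore suffices to prove the hydrodynamic limit for $(\eta(t))_{t\geq 0}$, and the overall scheme is the standard one for attractive systems: (i) a priori bounds and tightness of the empirical measure; (ii) identification of every limit point as a solution of the Kru\v{z}kov entropy inequalities \eqref{hEnegInteg}; (iii) conclusion by uniqueness.

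First I would introduce the empirical measure $\pi^N_t(du)=\tfrac1N\sum_{x\in\Z}\eta_x(t)\,\delta_{x/N}(du)$. Since $\rho_0$ is bounded, $\mu_N$ is stochastically dominated by an equilibrium measure $\nu_{\rho^\star}$ (as in the setting of Lemma \ref{lem:secondclass}), and by attractiveness \eqref{ass:H1} this domination is preserved along the dynamics. This yields control, uniform in $N$ and $t$, of $\overline{\bb E}_{\mu_N}[\eta_x(t)]$ and of its exponential moments, hence relative compactness of the laws of $(\pi^N_\cdot)$ in $D([0,T],\mc M)$, with every limit point of the form $\rho(t,u)\,du$ for some nonnegative $\rho\in L^\infty(\R_+\times\R)$. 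Because the entropy solution to \eqref{hEneg} is unique, once each limit point is shown to be that solution the limit is deterministic, and the convergence in probability stated in the theorem follows immediately.

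The heart of the matter is the derivation of \eqref{hEnegInteg}. Fix $c\geq 0$ and couple $(\eta(t))$ with a stationary copy $(\eta^c(t))$ started from $\nu_c$ through the \emph{basic coupling} of $N\Lex$ (shared Poisson clocks, both marginals attempting the same jumps); attractiveness guarantees this coupling is monotone and that discrepancies of opposite signs annihilate upon meeting. Applying the coupled generator to $\tfrac1N\sum_x H(t,x/N)\,|\eta_x-\eta^c_x|$ for a nonnegative $H\in C_K^{1,1}$, summing by parts, and using that $g$ is non-decreasing so that $\mathrm{sgn}(\eta_x-\eta^c_x)\,(g(\eta_x)-g(\eta^c_x))=|g(\eta_x)-g(\eta^c_x)|$, one obtains a discrete Kru\v{z}kov inequality: in the limit the integrated expression $\int_0^\infty\!\!\int\{\partial_tH\,|\rho-c|+(2p-1)\partial_uH\,|\Phi(\rho)-\Phi(c)|\}$ emerges, up to a non-negative entropy-production term stemming from discrepancy annihilation and a vanishing error. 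The crucial and most delicate step is the \emph{replacement} of the microscopic factor $g(\eta_x)$ by $\Phi$ of the local density, namely
\[
\lim_{\eps\to 0}\limsup_{N\to\infty}\overline{\bb E}_{\mu_N}\Big[\int_0^t\Big|\tfrac{1}{2\eps N+1}\sum_{|y-x|\leq\eps N} g(\eta_y(s))-\Phi\big(\tfrac{1}{2\eps N+1}\sum_{|y-x|\leq\eps N}\eta_y(s)\big)\Big|\,ds\Big]=0.
\]
I expect this to be the main obstacle: in the purely hyperbolic scaling there is no second-order term, so the usual one-block estimate based on the Dirichlet form is unavailable, and the replacement must instead be extracted from attractiveness via a coupling together with a subadditive ergodic argument, as in \cite{rezakhanlou91}. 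This is exactly where the monotonicity assumption \eqref{ass:H1} is indispensable.

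Finally, combining the microscopic entropy inequality with the replacement lemma and letting $\eps\to 0$ after $N\to\infty$, each limit point $\rho$ is shown to satisfy \eqref{hEnegInteg} for every constant $c\geq 0$; the initial condition is recovered from the convergence $\pi^N_0\to\rho_0(u)\,du$, which holds because $\mu_N$ is the product Poisson measure associated with $\rho_0$. Hence every limit point is an entropy solution to \eqref{hEneg}. By Kru\v{z}kov's uniqueness theorem \cite[Theorem A.2.5.3]{klscaling} this solution is unique, so $(\pi^N_\cdot)$ has a single, deterministic limit point, which is precisely the convergence in probability asserted in the statement.
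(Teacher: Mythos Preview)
The paper does not give its own proof of this theorem: it is quoted as a known result and attributed to Rezakhanlou \cite{rezakhanlou91}, with no argument beyond the observation that $(\omega(t)+\zeta(t))_{t\geq 0}$ is the ordinary asymmetric attractive zero-range process driven by $N\Lex$. So there is nothing in the present paper to compare your attempt against.

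That said, your outline is an accurate high-level summary of the strategy in \cite{rezakhanlou91}: tightness from stochastic domination by $\nu_{\rho^\star}$ and attractiveness; a microscopic Kru\v{z}kov inequality obtained by basic-coupling the process with a stationary copy at density $c$; a replacement (one-block) estimate which, in the hyperbolic time scale, cannot rely on a Dirichlet-form bound and is instead extracted from attractiveness; and conclusion by Kru\v{z}kov uniqueness. You correctly flag the replacement step as the crux. Two minor remarks: the paper (and Rezakhanlou) state the one-block estimate with a fixed box of size $\ell$ and take $\ell\to\infty$ after $N\to\infty$, rather than with boxes of size $\eps N$ as you wrote; and the actual implementation of the replacement lemma in \cite{rezakhanlou91} is substantially more delicate than a one-line appeal to a ``subadditive ergodic argument'', but the architecture you describe is the right one.
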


\subsection{ Microscopic entropy inequality.}
\label{sebsection:coupling}  
In this subsection, we shall couple two copies of our Markov process, denoted by $\omega(t)$ and $\varpi (t)$. Recall that the process is supposed to be attractive (Assumption \ref{ass:H1}). 
If both processes have the same number of particles on a given site, a particle at this site will perform at the normal rate a particle jump/destruction, identical in both processes. If the processes do not have the same number of particles at a given site, at the minimal rate $g (\omega_x) \wedge g (\varpi_x)$, a particle will perform the same jump/destruction in both processes. To make up for the loss of jump rate, the process with the highest number of particles will make a particle jump/destruction while the other one will remain unchanged at a compensated rate $|g (\varpi_x) - g (\omega_x)|$. The generator of the coupled process  is given by its action on local Lipschitz functions $f:\Omega\times\Omega \to \R$ by

\begin{equation*}
	\begin{aligned}
		\tilde{\mathcal{L}}_N f (\omega,\varpi)&= \sum_{x \in \Z} \big(g (\omega_x) \wedge g (\varpi_x)\big)\;  \big\{  p f(\omega^{x,x+1},\varpi^{x,x+1}) + (1-p) f(\omega^{x,x-1},\varpi^{x,x-1}) - f (\omega,\varpi)\big\}\\
		&+ \sum_{x \in \Z} \big(g (\omega_x) - g (\omega_x) \wedge g (\varpi_x)\big) \big\{  p f(\omega^{x,x+1},\varpi) + (1-p) f(\omega^{x,x-1},\varpi) - f (\omega,\varpi)\big\}\\
		&+ \sum_{x \in \Z} \big(g (\varpi_x) - g (\omega_x) \wedge g (\varpi_x)\big) \big\{  p f(\omega,\varpi^{x,x+1}) + (1-p) f(\omega,\varpi^{x,x-1}) - f (\omega,\varpi)\big\}\\
		&+ \alpha N^{\beta} \big(g (\omega_0) \wedge g (\varpi_0)\big)\; \big\{ f(\omega^{(0)},\varpi^{(0)}) - f(\omega,\varpi)\big\}\\
		&+ \alpha N^{\beta} \big(g (\omega_0) - g (\omega_0) \wedge g (\varpi_0)\big)\; \big\{ f(\omega^{(0)},\varpi) - f(\omega,\varpi)\big\}\\
		&+ \alpha N^{\beta} \big(g (\varpi_0) - g (\omega_0) \wedge g (\varpi_0)\big)\; \big\{ f(\omega,\varpi^{(0)} )- f(\omega,\varpi)\big\}.
	\end{aligned}
\end{equation*}
One can easily check that under this coupling, if the process $(\omega, \varpi)$ evolves according to $\tilde{\mathcal{L}}_N$, then both marginal processes $\omega(t)$ and $\varpi (t)$ evolve according to $\mathcal{L}_N$. For a probability measure $\tilde{\mu}_N$ on $\Omega \times \Omega$, let $\tilde{\bb{P}}_{\tilde{\mu}_N}$ be the distribution on $D ([0,T], \Omega \times \Omega)$ of the process $(\omega(t),\varpi(t))$ with generator $N \tilde{\mathcal{L}}_N$ and with initial distribution $\tilde{\mu}_N$.

\medskip

For $x\in\Z$ and a positive integer $\ell$,  denote by $\omega^\ell_x= (2 \ell +1)^{-1} \sum_{|y-x| \leq \ell} \omega_y$ the average number of particles in a box of size $2\ell+1$ around $x$.
Given the coupling above, we now state a microscopic version of entropy inequality.

\begin{lemma}[Microscopic entropy inequality]\label{lem:microEntIneqn}
	Fix $\beta < 1$.  Let $\tilde{\mu}_N$ be a probability measure on $\Omega \times \Omega$ with both marginals stochastically dominated by $\nu_{\rho^\star}$ for some $\rho^\star > 0$. Then for any $\varepsilon > 0$ and for any non-negative smooth function $H$ with compact support in $(0,\infty) \times \bb{R}$,
	\begin{equation}\label{MicroEntInePos}
		\begin{aligned}
			\lim_{\ell \rightarrow \infty}\, \liminf_{N \rightarrow \infty}\, \tilde{\mathbb{P}}_{\tilde{\mu}_N}\,  \bigg[ \int_0^\infty  N^{-1} \sum_{x \in \Z} &\Big\{ \partial_t H (t,\tfrac x N) \big|\omega_x^\ell (t)- \varpi_x^\ell (t)\big| \\
			&+ (2p-1) \partial_u H (t,\tfrac x N)\; \big|\Phi (\omega_x^\ell(t))- \Phi(\varpi_x^\ell (t))\big|\Big\} dt \geq - \varepsilon \bigg] = 1.
		\end{aligned}
	\end{equation}
\end{lemma}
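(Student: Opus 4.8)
\textbf{Proof plan for Lemma \ref{lem:microEntIneqn}.}
The plan is to derive the microscopic entropy inequality from a Lax-entropy-type estimate for the coupled process, following the strategy pioneered by Rezakhanlou \cite{rezakhanlou91} and adapted to boundary-driven systems in \cite{demasi2021,landim1996hydrodynamical}. The starting point is that, by Assumption \ref{ass:H1}, the coupled generator $\tilde{\mathcal{L}}_N$ preserves the partial order, so $\big|\omega_x - \varpi_x\big|$ plays the role of a second-class particle density. First I would test the coupled dynamics against the empirical measure associated to $\big|\omega_x^\ell - \varpi_x^\ell\big|$ and write down the Dynkin martingale
\[
M_t^{N,\ell}(H) = \frac{1}{N}\sum_{x\in\Z} H(t,\tfrac xN)\big|\omega_x^\ell(t)-\varpi_x^\ell(t)\big| - \frac 1N\sum_{x\in\Z} H(0,\tfrac xN)\big|\omega_x^\ell(0)-\varpi_x^\ell(0)\big| - \int_0^t N\tilde{\mathcal{L}}_N\Big(\tfrac 1N\sum_x H(s,\tfrac xN)|\omega_x^\ell-\varpi_x^\ell|\Big)\,ds - \int_0^t \tfrac{d}{ds}\Big(\tfrac 1N \sum_x H(s,\tfrac xN)|\omega_x^\ell-\varpi_x^\ell|\Big)ds.
\]
Since $H$ has compact support in $(0,\infty)\times\R$, the boundary terms at $t=0$ and $t=\infty$ vanish, and a standard $L^2$ martingale estimate (using that the rates are bounded on the domain $\omega,\varpi\preceq\nu_{\rho^\star}$, which is propagated by attractiveness) shows $\mathbb E[(M_T^{N,\ell})^2]\to 0$ as $N\to\infty$ for fixed $\ell$. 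It therefore suffices to show that the integrated generator term is, up to $o_N(1)+o_\ell(1)$ and with probability tending to one, bounded below by the left-hand side of \eqref{MicroEntInePos}, minus a nonnegative "boundary flux at the origin" contribution.

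Next I would compute $N\tilde{\mathcal{L}}_N |\omega_x^\ell - \varpi_x^\ell|$ explicitly. The bulk jump part produces, after summation by parts against $H$ and after the block averaging at scale $\ell$ smooths the discrete gradient, the term $(2p-1)\,\partial_u H(t,\tfrac xN)\,\big|\Phi(\omega_x^\ell)-\Phi(\varpi_x^\ell)\big|$ up to an error controlled by a combination of (i) a one-block/two-blocks replacement of the local current $g(\omega_x)-g(\varpi_x)$ (restricted to discrepancies of a single sign, which is where attractiveness enters) by $\mathrm{sgn}(\omega_x^\ell-\varpi_x^\ell)(\Phi(\omega_x^\ell)-\Phi(\varpi_x^\ell))$, and (ii) the exchange of the discrete and continuous gradients, absorbed by the $\ell\to\infty$ then smoothness of $H$. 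The crucial sign input is that, thanks to the coupling, discrepancies are created only at the origin and never change sign in the bulk, so $|g(\omega_x)-g(\varpi_x)| = \mathrm{sgn}(\omega_x-\varpi_x)(g(\omega_x)-g(\varpi_x))$ there; the Lax entropy inequality then comes from convexity/monotonicity of $\Phi$ exactly as in the no-boundary case. The destruction part at the origin contributes a term proportional to $\alpha N^{1+\beta} H(t,0)\cdot(\text{something bounded})$; since $H$ has compact support in $(0,\infty)\times\R$ its spatial support is bounded away from $u=0$ only in the $t$-variable, so here one uses that $H(t,\tfrac xN)$ evaluated at $x=0$ is just $H(t,0)$ and the factor $N^{1+\beta}$ is controlled because $\beta<1$ — wait, that alone is not enough, so the actual mechanism is that the destruction term at $x=0$ has a definite sign: destroying a particle on the larger configuration decreases $|\omega_0-\varpi_0|$, so the generator contribution at the origin is $\leq 0$ and can simply be dropped, giving the inequality (rather than equality) in \eqref{MicroEntInePos}. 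This is precisely why the statement is an inequality and why no boundary term appears on the right-hand side.

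The main obstacle, as usual in this circle of ideas, is the \emph{one-block and two-blocks replacement}: justifying that the time-integrated local function $g(\omega_x)-g(\varpi_x) - \mathrm{sgn}(\omega_x^\ell-\varpi_x^\ell)(\Phi(\omega_x^\ell)-\Phi(\varpi_x^\ell))$ is negligible after averaging over a mesoscopic block and in time. In the attractive asymmetric setting one cannot use the entropy method of Guo--Papanicolaou--Varadhan directly because there is no spectral gap and no useful Dirichlet form bound under the accelerated generator $N\mathcal{L}_N$; instead one invokes the coupling/\,second-class-particle argument of Rezakhanlou, together with the bound on the number of second-class particles crossing any compact region. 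Concretely, I would borrow from \cite{rezakhanlou91} (and its boundary adaptation in \cite{landim1996hydrodynamical}) the fact that the empirical measure of discrepancies is tight and that any limit point is concentrated on Young measures satisfying the Lax entropy inequalities; the block replacement is then a consequence of the fact that $\big|\Phi(\omega_x^\ell)-\Phi(\varpi_x^\ell)\big|$ is continuous in the local density and that the measure $\mu_N$ is stochastically dominated by $\nu_{\rho^\star}$, so that high-density corrections are uniformly small. The contribution of the origin must be handled with care in this replacement step too: one needs that the fraction of time the block around a site near the origin is "anomalous" because of the destruction dynamics is negligible, which follows from Lemma \ref{lem:NumberSec} (the number of second-class particles to the left of the origin, suitably rescaled, vanishes) together with the restriction $\beta<1$ that keeps $\alpha N^{1+\beta}g(\omega_0)$, integrated in time and divided by $N$, under control on compact time intervals. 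Once these replacements are in place, letting $N\to\infty$ and then $\ell\to\infty$ and using the (deterministic) convexity inequality for $\Phi$ yields \eqref{MicroEntInePos}.
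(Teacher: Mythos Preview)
Your overall scaffolding---Dynkin martingale for the discrepancy field, drop the Glauber contribution at the origin by its sign, summation by parts to produce the $(2p-1)\partial_u H$ term, then a replacement to pass from $|g(\omega_x)-g(\varpi_x)|$ to $|\Phi(\omega_x^\ell)-\Phi(\varpi_x^\ell)|$---matches the paper's route. But there is a genuine gap in the replacement step.

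The identity $|g(\omega_x)-g(\varpi_x)| = \mathrm{sgn}(\omega_x-\varpi_x)\big(g(\omega_x)-g(\varpi_x)\big)$ is trivially true pointwise by monotonicity of $g$, so it does no work. What you actually need is that, after block averaging,
\[
\frac{1}{2\ell+1}\sum_{|y-x|\leq \ell} \big|g(\omega_y)-g(\varpi_y)\big| \;\approx\; \Big|\frac{1}{2\ell+1}\sum_{|y-x|\leq \ell}\big(g(\omega_y)-g(\varpi_y)\big)\Big|,
\]
and likewise for $|\omega_y-\varpi_y|$. This requires that $\mathrm{sgn}(\omega_y-\varpi_y)$ is (asymptotically) constant over the block, i.e.\ that the two configurations are \emph{locally ordered}. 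That is not automatic: the lemma allows an arbitrary coupled initial law $\tilde{\mu}_N$, so discrepancies of both signs can coexist at time zero, and your sentence ``discrepancies are created only at the origin and never change sign in the bulk'' does not address this (in fact no discrepancies are created anywhere under $\tilde{\mathcal L}_N$; the issue is the initial ones). The paper devotes a separate Step~1 to proving \eqref{ordered}: applying a martingale argument to $N^{-1}\sum_{|x|\leq AN}|\omega_x-\varpi_x|$, one sees that $\tilde{\mathcal L}_N$ of this sum is bounded above by boundary terms of order $N^{-1}$ minus the annihilation rate of opposite-sign neighboring pairs, so the time-integrated density of such pairs is $O(N^{-1})$. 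This local-ordering result is the engine that turns the pointwise absolute value into the absolute value of the block average; without it the argument does not close.

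Two secondary points. First, for the final passage from the block average of $g$ to $\Phi$ of the block density, the paper invokes the standard one-block estimate (Lemma~\ref{lemOneBlock}); neither Young-measure tightness nor Lemma~\ref{lem:NumberSec} (which concerns the \emph{other} coupling of Section~\ref{sec:secondclass}) plays any role here. Second, building the $\ell$-average into the martingale from the outset, i.e.\ applying $\tilde{\mathcal L}_N$ to $|\omega_x^\ell-\varpi_x^\ell|$, makes the generator computation unnecessarily awkward (you have to differentiate an absolute value of a sum). The paper first runs the martingale estimate with the pointwise $|\omega_x-\varpi_x|$ to reach \eqref{eqn1}, and only then introduces the block average---that ordering is cleaner and is precisely where the local-ordering input \eqref{ordered} gets consumed.
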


\begin{remark}
	Using the same argument, we could prove
	\begin{equation}\label{MicroEntIne0}
		\begin{aligned}
			\lim_{\ell \rightarrow \infty}\, \liminf_{N \rightarrow \infty}\, \tilde{\mathbb{P}}_{\tilde{\mu}_N}\,  \bigg[ \int_0^\infty  N^{-1} \sum_{x \in \Z} &\Big\{ \partial_t H (t,\tfrac x N) \big(\omega_x^\ell (t)- \varpi_x^\ell (t)\big)^{\pm} \\
			&+ (2p-1) \partial_u H (t,\tfrac x N)\; \big( \Phi (\omega_x^\ell(t))- \Phi(\varpi_x^\ell (t))\big)^{\pm} \Big\} dt \geq - \varepsilon \bigg] = 1.
		\end{aligned}
	\end{equation}
We will use \eqref{MicroEntInePos} in the case $\beta > 0$, and \eqref{MicroEntIne0} in the case $\beta = 0$. 
Note that the case $\beta<0$ is a direct consequence of the hydrodynamic limit stated in  Theorem \ref{thm:HydroZrp}, because then the particle destruction at the origin is too weak to affect the macroscopic density.
\end{remark}

\begin{proof}[Proof of Lemma \ref{lem:microEntIneqn}]
	We follow the strategy presented in \cite[Chapter 8]{klscaling} and \cite[Proposition 5.1]{landim1996hydrodynamical}.  For this reason we only sketch the proof here.  The main idea is as follows: we first investigate the total number of discrepancies over large boxes (of size of order $N$) to show that the two processes $\omega$ and $\varpi$ are ultimately locally ordered in such large boxes. This then permits to replace the average of the absolute value of relevant microscopic observables by the absolute value of their average. The result then follows from the one-block estimate which is stated in Lemma \ref{lemOneBlock} below.
	
	\medskip
	
	{\it Step 1.} The first step is to prove that in the limit $N \uparrow \infty$ the configurations $\omega$ and $\varpi$ are ultimately locally ordered. To be precise, for any $A > 0$, any $t > 0$, and any fixed $y \in \mathbb{Z}$, we have
	\begin{equation}\label{ordered}
		\lim_{N \rightarrow \infty} \, \tilde{\bb{E}}_{\tilde{\mu}_N} \,\bigg[ \int_0^t N^{-1} \sum_{|x| \leq A N} G_{x,x+y} (\omega(s),\varpi(s))\, ds\bigg] = 0,
	\end{equation}
	where $G_{x,x'} (\omega,\varpi) = \mathbb{1} \{\omega_x < \varpi_x, \omega_{x'} > \varpi_{x'}\}+ \mathbb{1} \{\omega_x > \varpi_x, \omega_{x'} < \varpi_{x'}\}$.  	 The idea is to investigate the mean zero martingale defined as
\[
	M^N_t := \frac 1N \sum_{|x| \leq A N } |\omega_x (t) - \varpi_x (t)| - \frac 1N\sum_{|x| \leq A N} |\omega_x (0) - \varpi_x (0)|  - 
	\int_0^t\, \sum_{|x| \leq A N}  \tilde{\mathcal{L}}_N \big(|\omega_x (s) - \varpi_x (s)|\big)\,ds,
	\]
	whose quadratic variation is bounded by a constant multiple of
	\[\int_0^t \Big\{ \frac{1}{N} \sum_{|x| \leq A N +1} \big[g(\omega_x (s)) + g (\varpi_x (s))\big]  + \alpha N^{\beta-1} [g(\omega_0 (s)) + g (\varpi_0 (s))] \Big\}\,ds.  \]
	By the attractiveness \mm{assumption \eqref{ass:H1}}, the martingale and the first two terms appearing in the expression of the martingale $M^N_t$ divided by $N$ vanish in the limit $N \rightarrow \infty$. Since
	\begin{multline*}
	\tilde{\mathcal{L}}_N |\omega_x  - \varpi_x | \leq	p G_{x,x-1} (\omega,\varpi ) \big|g (\omega_{x-1} ) - g (\varpi_{x-1} ) \big| \\
		+ (1-p)G_{x,x+1} (\omega,\varpi ) \big|g (\omega_{x+1} ) - g (\varpi_{x+1} ) \big| \Big\} + \alpha N^{\beta} \big|g (\omega_{0} ) - g (\varpi_{0} )\big|,
	\end{multline*}
we have
	\begin{multline}
\lim_{N \rightarrow \infty}\,\tilde{\bb{E}}_{\tilde{\mu}_N} \, \bigg[ \int_0^t  N^{-1}  \sum_{|x| \leq A N} \Big\{p G_{x,x-1} (\omega(s),\varpi (s)) \big|g (\omega_{x-1} (s)) - g (\varpi_{x-1} (s)) \big| \\
		\qquad + (1-p)G_{x,x+1} (\omega(s),\varpi (s)) \big|g (\omega_{x+1} (s)) - g (\varpi_{x+1} (s)) \big| \Big\} + \alpha N^{\beta-1} \big|g (\omega_{0} (s)) - g (\varpi_{0} (s)) \big| \,ds \bigg]=0. \label{eq:step1ine}
	\end{multline}
 In order to prove \eqref{ordered}, one can then directly implement the same induction argument  as in the  proof of \cite[Lemma 8.2.2]{klscaling}: more precisely, if we introduce
\begin{align*}I_m(\omega,\varpi):=\frac{1}{N}\sum_{|x|\leqslant AN} \Big[ & p\mathbb{1}\{m=\omega(x-1)<\varpi(x-1),\; \omega(x)>\varpi(x)\}\\ & + (1-p)\mathbb{1}\{m=\omega(x+1)<\varpi(x+1),\; \omega(x)>\varpi(x)\}\Big]\end{align*} then from \eqref{eq:step1ine} one deduces that, for any $m \in \N$, 
\[ \lim_{N\to\infty} \tilde{\bb E}_{\tilde\mu_N} \bigg[\int_0^t I_m(\omega(s),\varpi(s))ds\bigg]=0\]
and finally one is able to prove \eqref{ordered} by summing over $m$.
	
	\medskip
	
	{\it Step 2.}    For any non-negative smooth function $H$ with compact support in $(0,\infty) \times \bb{R}$, consider the mean zero martingale 
	\begin{equation}\label{martin1}
		\begin{aligned}
			\mathfrak{M}^{N}_t :=  \frac 1N \sum_{x \in \mathbb{Z}} H(t,\tfrac x N) |\omega_x (t) - \varpi_x (t)|
			-  \int_0^t (\partial_s + N \tilde{\mathcal{L}}_N) \bigg(\frac 1N \sum_{x \in \bb{Z}}    H (s,\tfrac x N) |\omega_x (s) - \varpi_x (s)|\bigg)\,ds.
		\end{aligned}
	\end{equation}
First, by Doob's inequality and direct calculations, 
	\begin{align*}
	 &\tilde{\bb{E}}_{\tilde{\mu}_N}\, [\sup_{0 \leq s \leq t} (\mathfrak{M}^N_s)^2] \leq 4 \tilde{\bb{E}}_{\tilde{\mu}_N}\, [ (\mathfrak{M}^N_t)^2]\\
		&\leq \int_0^t  \frac{4}{N} \sum_{x \in \Z} \tilde{\bb{E}}_{\tilde{\mu}_N} \left[\big|g(\omega_x (s)) - g(\varpi_x (s)) \big| G_{x,x\pm 1}(\omega_x (s),\varpi_x (s)) \right] \big[H(s,\tfrac{x}{N}) + H(s,\tfrac{x \pm 1}{N})\big]^2\,ds\\
		&\quad + \int_0^t \frac{4}{N} \sum_{x \in \Z}  \tilde{\bb{E}}_{\tilde{\mu}_N} \left[ \big|g(\omega_x (s)) - g(\varpi_x (s)) \big| (1-G_{x,x\pm 1}(\omega_x (s),\varpi_x (s))) \right] \big[H(s,\tfrac{x}{N}) - H(s,\tfrac{x \pm 1}{N})\big]^2\,ds\\
		&\quad + \int_0^t  4 \alpha N^{\beta-1} \tilde{\bb{E}}_{\tilde{\mu}_N} \left[ \big|g(\omega_0 (s)) - g(\varpi_0 (s)) \big|  \right] H(s,0)^2 ds.
	\end{align*} From Cauchy-Schwarz inequality, and using \eqref{ordered} and attractiveness \mm{assumption \eqref{ass:H1}}, the first term on the right side  converges to zero as $N \rightarrow \infty$. Also, the second term is bounded by $C N^{-2}$ and the third one by $C N^{\beta-1}$ for some constant $C=C(\alpha,H,\rho^\star)$. Since $\beta < 1$, we obtain that $\tilde{\bb{E}}_{\tilde{\mu}_N}\, [\sup_{0 \leq s \leq t} (\mathfrak{M}^N_s)^2] \to 0$ as $N\to\infty$.
	Moreover, since $H$ has compact support in $(0,\infty) \times \bb{R}$, we can take $t$ large enough for the first term on the  right hand side \mm{of \eqref{martin1}} to vanish. By direct calculations, we  bound the term $\sum_{x \in \bb{Z}}   H (s,\frac x N) \tilde{\mathcal{L}}_N  |\omega_x  - \varpi_x|$ from above by 	
\[
		\sum_{x \in \bb{Z}}   H(s,\tfrac x N) \big( p |g(\omega_{x-1}) - g (\varpi_{x-1}) | + (1-p) |g(\omega_{x+1}) - g (\varpi_{x+1}) | - |g(\omega_{x}) - g (\varpi_{x})|   \big).
\]
	An integration by parts allows us to write the first term above as
	\[
	\frac 1N	\sum_{x \in \bb{Z}}  (2p-1)  \partial_uH(s,\tfrac x N)  |g(\omega_{x}) - g (\varpi_{x})| + o_N (1).
	\]
	Therefore \mm{we conclude that}, for any $\epsilon > 0$,
	\begin{equation}\label{eqn1}
		\begin{aligned}
			\lim_{N \rightarrow \infty}\,\tilde{\bb{P}}_{\tilde{\mu}_N}\,  \bigg[ \int_0^\infty  N^{-1} \sum_{x \in \Z} &\Big\{ \partial_t H (t,\tfrac x N) \big|\omega_x (t)-\varpi_x (t)\big| \\
			&+ (2p-1) \partial_u H (t,\tfrac x N)\; \big|g(\omega_x (t))- g (\varpi_x (t))\big|\Big\} dt \geq - \varepsilon \bigg] = 1.
		\end{aligned}
	\end{equation}
	
	\medskip
	
	{\it Step 3.}  Due to the smoothness and compactness of $H$, in \eqref{eqn1} we can replace $|\omega_{x} - \varpi_{x}|$ and $|g(\omega_{x}) - g(\varpi_x)|$ with their spatial averages over sites in a box of size $2\ell +1$ around $x$.  Moreover, by \eqref{ordered} and repeating the proof of  \cite[Lemma 8.2.3]{klscaling}, we can replace  
	\[ (2\ell+1)^{-1} \sum_{|y-x| \leq \ell} |g(\omega_y (t))-g (\varpi_y (t))|  \quad \text{ resp. }\,(2\ell+1)^{-1} \sum_{|y-x| \leq \ell} |\omega_y (t)-\varpi_y (t)| \] 
	\textit{i.e.}~the average of the absolute value, by 
	\[\bigg|(2\ell+1)^{-1} \sum_{|y-x| \leq \ell} g(\omega_y (t))-g (\varpi_y (t))\bigg| \quad  \text{ resp. }\,|\omega^\ell_x (t) - \varpi^\ell_x (t)|\] 
	\textit{i.e.}~the absolute value of the average.  Therefore, we have
	\begin{equation}\label{aa}
		\begin{aligned}
			\lim_{N \rightarrow \infty}\,\tilde{\bb{P}}_{\tilde{\mu}_N}\,  \bigg(&\int_0^\infty \frac 1N \sum_{x \in \Z} \bigg\{ \partial_t H (t,\tfrac x N) \big|\omega_x^\ell (t)-\varpi_x^\ell (t)\big| 
			+ (2p-1) \partial_u H (t,\tfrac x N)\\
			&\qquad	\times \Big|(2\ell+1)^{-1} \sum_{|y-x| \leq \ell}g(\omega_y (t))- (2\ell+1)^{-1} \sum_{|y-x| \leq \ell} g (\varpi_y (t))\Big|\bigg\} dt \geq - \varepsilon \bigg) = 1.
		\end{aligned}
	\end{equation}
	Note that the probability in \eqref{aa} is bounded from above by
\begin{align*}
&\tilde{\bb{P}}_{\tilde{\mu}_N} \bigg( \int_0^t \frac 1N \sum_{|x| \leq A N} \tau_x V_\ell (\omega (s)) \,ds \geq \varepsilon \bigg) 
+  \tilde{\bb{P}}_{\tilde{\mu}_N} \bigg( \int_0^t \frac 1N \sum_{|x| \leq A N} \tau_x V_\ell (\varpi (s)) \,ds \geq \varepsilon \bigg)\\
&+ \tilde{\bb{P}}_{\tilde{\mu}_N}\,  \bigg( \int_0^\infty \frac 1N \sum_{x \in \Z} \bigg\{ \partial_t H (t,\tfrac x N) \big|\omega_x^\ell (t)-\varpi_x^\ell (t)\big| 
+ (2p-1) \partial_u H (t,\tfrac x N)\; \big|\Phi (\omega_x^\ell(t))- \Phi(\varpi_x^\ell (t))\big|\Big\} dt \geq - 3 \varepsilon \bigg),
\end{align*}
where \[
	V_\ell (\omega) = \Big| (2\ell+1)^{-1} \sum_{|y| \leq \ell} g (\omega_y) - \Phi (\omega^\ell_0) \Big|.
	\] Since $\varepsilon$ is arbitrary, we complete the proof of \eqref{MicroEntInePos} by using the  one-block estimate as stated in  Lemma \ref{lemOneBlock} below and \eqref{aa}.
 \end{proof}

For any $x \in \Z$, define the shift operator $\tau_x$, which acts on configurations $\omega \in \Omega$, as $(\tau_x \omega)_y = \omega_{x+y},\, y \in \Z$. Naturally, for local functions $f$ on $\Omega$, define $(\tau_x f) (\omega) = f(\tau_x \omega)$.

\begin{lemma}[One-block estimate]\label{lemOneBlock}
	If $\beta < 1$, then for any $\varepsilon > 0$, any $A > 0$ and any $t > 0$,
	\begin{equation*}
		\limsup_{\ell \rightarrow \infty}\, \limsup_{N \rightarrow \infty}\, \tilde{\bb{P}}_{\tilde{\mu}_N} \bigg( \int_0^t \frac 1N \sum_{|x| \leq A N} \tau_x V_\ell (\omega (s)) \,ds \geq \varepsilon \bigg) = 0,
	\end{equation*}
	where
	\[
	V_\ell (\omega) = \Big| (2\ell+1)^{-1} \sum_{|y| \leq \ell} g (\omega_y) - \Phi (\omega^\ell_0) \Big|.
	\]
	The same result obviously holds with $\omega (t)$ replaced by $\varpi (t)$.
\end{lemma}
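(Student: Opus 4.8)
The plan is to run the classical one-block scheme of \cite[Chapter~5]{klscaling} and \cite[Proposition~5.1]{landim1996hydrodynamical}, adjusted to the hyperbolic time scale and to the fact that the reference product measure is not invariant for the full dynamics. I will treat the $\omega$-marginal; the statement for $\varpi$ is identical since $\varpi(t)$ also evolves with generator $N\mathcal{L}_N$. Throughout I use that, by attractiveness and the hypothesis on $\tilde\mu_N$, the law of $\omega(s)$ is stochastically dominated by $\nu_{\rho^\star}$ for every $s\ge 0$. First I would make three routine reductions. (a) Since $g(k)\le a_0 k$ and $\Phi(\rho)=E_{\nu_\rho}[g(\omega_0)]\le a_0\rho$, the domination lets me replace $g$ by $g\wedge b$ and cut off $\omega^\ell_x$, at the price of an error vanishing when $b\to\infty$ after $N\to\infty$; this makes $V_\ell$ bounded and reduces, on each box, to a finite configuration space. (b) The $O(\ell)$ terms in $\frac1N\sum_{|x|\le AN}\tau_x V_\ell$ whose support meets the origin contribute at most $O(\ell/N)$ and can be discarded, so on the support of each surviving $\tau_x V_\ell$ only the generator $N\Lex$ acts. (c) Because a particle travels a macroscopic distance larger than $Bt$ before time $t$ with probability super-exponentially small in $N$ (as in the proof of Lemma~\ref{lem:secondclass}), I may condition $\tilde\mu_N$ on a fixed configuration outside a large box $[-BN,BN]\supset[-AN,AN]$, so that the law of $\omega(0)$ has relative entropy at most $C_0 N$ with respect to $\nu_{\rho^\star}$.

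The core of the argument is an entropy/energy estimate with $\nu_{\rho^\star}$ as reference. Writing $f_s$ for the density of the law of $\omega(s)$ with respect to $\nu_{\rho^\star}$ and $D(\cdot)$ for the nonnegative Dirichlet form of the symmetric part of $\Lex$ in $L^2(\nu_{\rho^\star})$, the new feature is that $\nu_{\rho^\star}$ is invariant for $\Lex$ but \emph{not} for $\Lsink$, so the entropy production inequality reads
\[
\tfrac{d}{ds}H\big(\mathrm{law}(\omega(s))\,|\,\nu_{\rho^\star}\big)\;\le\;-c\,N\,D(\sqrt{f_s})+\alpha N^{1+\beta}\Phi(\rho^\star),
\]
where the last term bounds $N\langle f_s,\Lsink\log f_s\rangle_{\nu_{\rho^\star}}$, using $a\log(b/a)\le b-a$ and the change of variables $\int h(\omega^{(0)})g(\omega_0)\,d\nu_{\rho^\star}=\Phi(\rho^\star)\int h\,d\nu_{\rho^\star}$. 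Integrating in time gives the energy bound $cN\int_0^t D(\sqrt{f_s})\,ds\le C_0 N+\alpha N^{1+\beta}\Phi(\rho^\star)t$. Then for any $\lambda>0$ and $F=\frac1N\sum_{|x|\le AN}\tau_x V_\ell$, combining $\langle F,f_s\rangle\le\sup_h\{\langle F,h\rangle-\lambda D(\sqrt h)\}+\lambda D(\sqrt{f_s})$ (supremum over densities $h$) with the energy bound yields
\[
\tilde{\bb E}_{\tilde\mu_N}\Big[\int_0^t\tfrac1N\!\!\sum_{|x|\le AN}\!\!\tau_x V_\ell(\omega(s))\,ds\Big]\;\le\;t\sup_h\big\{\langle F,h\rangle_{\nu_{\rho^\star}}-\lambda D(\sqrt h)\big\}+\tfrac{\lambda}{c}\big(C_0+\alpha\Phi(\rho^\star)t\,N^{\beta}\big).
\]
Since $\beta<1$, I would choose $\lambda=N^{-\theta}$ with $\beta<\theta<1$, which sends the second term to $0$.

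For the supremum I would bound $D(\sqrt h)$ below by $\frac1{2\ell+1}\sum_{|x|\le AN}D_{B_x}(\sqrt h)$, with $B_x=\{y:|y-x|\le\ell\}$ and $D_B$ the Dirichlet form restricted to bonds inside $B$; projecting onto $B_x$ and invoking the translation invariance of $V_\ell$ and $\nu_{\rho^\star}$ reduces matters to one box,
\[
\sup_h\big\{\langle F,h\rangle-\lambda D(\sqrt h)\big\}\;\le\;(2A+1)\sup_{h'}\big\{\langle V_\ell,h'\rangle_{\nu_{\rho^\star}}-\kappa_N D_{B_0}(\sqrt{h'})\big\},\qquad \kappa_N:=\tfrac{\lambda N}{2\ell+1}=\tfrac{N^{1-\theta}}{2\ell+1}\to\infty,
\]
the supremum on the right over densities $h'$ on $B_0=\{|y|\le\ell\}$. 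Since $\kappa_N\to\infty$, a lower-semicontinuity/compactness argument (legitimate after the truncations above) shows that $\limsup_{N}\sup_{h'}\{\cdots\}=\max_k E_{\nu_{B_0,k}}[V_\ell]$, the maximum over the finitely many remaining values of $k=\sum_{|y|\le\ell}\omega_y$ and $\nu_{B_0,k}=\nu_{\rho^\star}(\,\cdot\mid\sum_{|y|\le\ell}\omega_y=k)$ the canonical measure, because the densities annihilating $D_{B_0}$ are precisely the convex combinations of the $\nu_{B_0,k}$. Finally, under $\nu_{B_0,k}$ one has $\omega^\ell_0=k/(2\ell+1)$ deterministically, so by the equivalence of ensembles for zero-range processes and the weak law of large numbers, $E_{\nu_{B_0,k}}[V_\ell]\to 0$ as $\ell\to\infty$, uniformly over $k$ with $k/(2\ell+1)$ in a fixed bounded interval. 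Collecting the reductions and applying Markov's inequality concludes the proof.

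I expect the genuine obstacle to be the interplay between the hyperbolic time scale and the non-invariance of $\nu_{\rho^\star}$: in the diffusive scaling the extra $\Lsink$-contribution would be negligible, whereas here it forces the window $N^{\beta}\ll\lambda^{-1}\ll N$ for the free parameter, which is nonempty exactly because $\beta<1$ — this is the origin of that hypothesis. I would also emphasize that only the \emph{one}-block estimate survives on the hyperbolic scale; the two-block estimate does not, which is precisely why attractiveness and coupling must be used elsewhere, and why here I deliberately keep $\omega^\ell_x$ rather than averages over macroscopic boxes.
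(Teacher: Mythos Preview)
Your sketch is correct and is precisely the standard entropy-method proof that the paper invokes (the paper itself omits the argument and only cites \cite[Lemma~5.4]{landim1996hydrodynamical}). In particular, you have correctly isolated the one new feature: the reference measure $\nu_{\rho^\star}$ is invariant for $\Lex$ but not for $\Lsink$, so the entropy production acquires the extra term $\alpha N^{1+\beta}\Phi(\rho^\star)$, and the choice $\lambda=N^{-\theta}$ with $\beta<\theta<1$ is exactly what makes both error terms vanish while still sending the localized Dirichlet-form coefficient $\kappa_N=N^{1-\theta}/(2\ell+1)$ to infinity --- this is indeed where the hypothesis $\beta<1$ enters.

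One small comment on reduction~(c): merely assuming stochastic domination by $\nu_{\rho^\star}$ does not by itself give that the restriction of $\tilde\mu_{N,1}$ to $[-BN,BN]$ has relative entropy $O(N)$ with respect to $\nu_{\rho^\star}$; conditioning on the outside does not manufacture an entropy bound inside. In all the applications in the paper (proof of Lemma~\ref{lem:microEntIneqn} and Section~\ref{sec:proof}), however, the marginals of $\tilde\mu_N$ are explicit product measures (either $\mu_N$ or one of the invariant product measures of Remark~\ref{remark:invariantm}) with uniformly bounded site marginals, so the specific-entropy bound $H_0\le C_0 N$ holds trivially after restriction to the box. It would be cleanest to state this as an additional hypothesis rather than claim it follows from stochastic domination alone; with that caveat the argument is complete.
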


The proof of the above lemma is quite standard, therefore we only sketch the proof here. We refer the readers to \cite[Chapter 5, Section 4]{klscaling} for its detailed implementation.

\begin{proof}
For two probability measures $\mu, \nu$ such that $\mu$ is absolutely continuous with respect to $\nu$, recall that the relative entropy of $\mu$ with respect to $\nu$ is defined as
\[H(\mu| \nu) = \int \log \frac{d\mu}{d \nu} d \mu.\]
Denote by $\mu_{N,t}$ the probability distribution of the accelerated process $\omega(t)$ at time $t$. Fix some $\rho > 0$ and denote $f^N_t = d \mu_{N,t} / d \nu_\rho$.

\medskip

{\it Step 1: Estimate of the Dirichlet form}. Following the proof in \cite[Chapter 5, Section 2]{klscaling}, one knows that the time evolution of the relative entropy is related to the Dirichlet form, as follows
\[H(\mu_{N,t} | \nu_\rho) + 2 N \int_0^t \E_{\nu_\rho} \big[ \sqrt{f^N_s} (\Lex + \Lsink) \sqrt{f^N_s} \big] d s \leq H(\mu_{N} | \nu_\rho).\]
Direct calculations show that
\[	\E_{\nu_\rho} \big[ \sqrt{f^N_s} \Lex  \sqrt{f^N_s} \big] = \frac{1}{4} \sum_{x \in \bb{Z}} 	\E_{\nu_\rho} \big[g(\omega_x) \big( \nabla_{x,x\pm 1} f^N_s (\omega)\big)^2\big] =: D^{\rm ZR}_N (f^N_s;\nu_\rho),\]
\ccl{where $\nabla_{x,x+1}f$ was defined after \eqref{lex}}. Moreover, \mm{there is a finite constant $C>0$ such that}\[\Big|\E_{\nu_\rho} \big[ \sqrt{f^N_s}  \Lsink \sqrt{f^N_s} \big]\Big| \leq C N^{\beta},\qquad \, H(\mu_{N} | \nu_\rho) \leq CN.\] Let \mm{us consider the time average} $\bar{f}^N_t = t^{-1} \int_0^t f^N_s ds$. By convexity of the Dirichlet form $D^{\rm ZR}_N$,
\[D^{\rm ZR}_N (\bar{f}^N_t;\nu_\rho) \leq C N^{\beta}.\]

\medskip

\red{{\it Step 2: Cutoff of large densities.}} By Markov's inequality and with the above notations, we  only need to prove  
\begin{equation*}
	\limsup_{\ell \rightarrow \infty}\, \limsup_{N \rightarrow \infty}\, \bb{E}_{\nu_\rho} \bigg[ \frac 1N \sum_{|x| \leq A N}  \tau_x V_\ell (\omega)  \bar{f}^N_t \bigg] = 0.
\end{equation*}
Then, by attractiveness \mm{assumption \eqref{ass:H1}}, we cut off large densities, which reduces the above equation to
\begin{equation}
\label{eq:Dir0}
	\limsup_{\ell \rightarrow \infty}\, \limsup_{N \rightarrow \infty}\, \sup_{D^{\rm ZR}_N (f;\nu_\rho) \leq C N^\beta}\,\bb{E}_{\nu_\rho} \bigg[ \frac 1N \sum_{|x| \leq A N}  \tau_x V_\ell (\omega)  f \mathbb{1} \{\omega^\ell_x \leq C\} \bigg] = 0.
\end{equation}

\medskip

\red{{\it Step 3: Estimate of the Dirichlet form over large microscopic boxes}. Given a probability density $f$ in the supremum above, consider its spatial average over the system, namely $\bar f:=\frac 1N \sum_{|x| \leq A N}\tau_x f$. Taking its conditional expectation $\bar f^{\ell}$ w.r.t.~the configuration outside of  $\Lambda_\ell:=[-\ell, \ell]$ yields that the expectation one wants to estimate in the left-hand side of \eqref{eq:Dir0} is simply 
\[\bb{E}_{\nu_\rho} \bigg[ V_\ell (\omega)  \mathbb{1} \{\omega^\ell_0 \leq C\} \bar f^\ell  \bigg]\] 
One can show that the Dirichlet form of $\bar f^\ell$ is small: roughly speaking, since the Dirichlet form of $f$ has total order $N^\beta$, it has order $N^{\beta-1}$ over each edge, thus order $\ell N^{\beta-1}$ over each box of length $\ell$. Letting $N \rightarrow \infty$, by convexity, the Dirichlet form over $\Lambda_\ell$ of the average $\bar f^\ell$ is less than that of $f$ over the same box, which vanishes, and we are left to prove}
\begin{equation*}
	\limsup_{\ell \rightarrow \infty}\sup_{D^{\rm ZR}_\ell (f;\nu_\rho)=0}\,\bb{E}_{\nu_\rho^\ell} \bigg[  V_\ell (\omega)  f \mathbb{1} \{\omega^\ell_0 \leq C\} \bigg] = 0.
\end{equation*}

\medskip

\red{{\it Step 4: Conclusion.}}
Above, $D^{\rm ZR}_\ell (f;\nu_\rho)$ is the corresponding Dirichlet form associated to the box $\Lambda_\ell:=[-\ell, \ell]$, and $\nu_\rho^\ell$ is the restriction of $\nu_\rho$ to $\N^{\Lambda_\ell}$. Last, we only need to decompose the above equation along hyperplanes with a fixed number of particles: \ccl{over those, since the Dirichlet form of $f$ vanishes, $f$ is not impacted by particle jumps ($\nabla_{x,x+1}f=0$ for $x\in \Lambda_\ell$), so that $f$ is constant over every hyperplane with a fixed number of particles. The strong law of large numbers then concludes the proof.}
\end{proof}

\section{Proof of Theorem  \ref{thmAttrac}}
\label{sec:proof}

In this section we give the proof of Theorem \ref{thmAttrac}.  We distinguish four cases, namely $\beta <0$, $\beta =0$, $0 < \beta < 1$ and $\beta \geqslant 1$.

\subsection{The case $\beta < 0$.} By Lemma \ref{lem:secondclass}, if $\beta < 0$, then for any compactly supported smooth function $H$,
\[
	\overline{\bb{E}}_N \Big[\frac{1}{N}\sum_{x \in \Z} H \left(\tfrac{x}{N}\right)\zeta_x (t) \Big] \leq  \frac{||H||_\infty}{N} \overline{\bb{E}}_N [K_t] \leq C  ||H||_\infty  \alpha t N^\beta \xrightarrow[N\to\infty]{}0,
\]
where $\zeta$ is the process of second-class particles introduced in Section \ref{sec:secondclass}. In particular, both processes $\omega (t)$ and $\omega (t) + \zeta (t)$ have the same macroscopic limit.  Theorem  \ref{thmAttrac} therefore follows from Theorem \ref{thm:HydroZrp}.

\subsection{The case $\beta = 0$} We now prove Theorem \ref{thmAttrac} in the case $\beta = 0$.  Several results proved in this subsection will also be used in the case  $\beta < 1$, we will make it explicit in the statements. We first introduce some notation.  Denote by $\mathcal{M}_+ (\bb{R})$ the space of positive Radon measures on $\bb{R}$ endowed with the vague topology. For each configuration $\omega\in\Omega$, define the empirical measure $\alpha^N \in \mathcal{M}_+ (\bb{R})$ as
\[\alpha^N (\omega) = \frac{1}{N} \sum_{x \in \bb{Z}} \omega_x \delta_{x/N},\]
where $\delta_{u}$ is the Dirac measure at $u\in\R$, and denote $\alpha^N_t = \alpha^N (\omega (t))$. Fix a time $T > 0$. Let $\Q_N$ be the distribution on the path space $D \big([0,T],\, \mathcal{M}_+ (\bb{R})\big)$ of the path $(\alpha^N_t)$, where $\alpha^N_0$ is distributed according to the pushforward distribution $\mu_N\circ (\alpha^N)^{-1}$. 

\begin{lemma}[Tightness]
\label{lem:tight}
	Suppose the initial measure $\mu_N$ is stochastically dominated by $\nu_{\rho^\star}$ for some $\rho^\star>0$. If $\beta < 1$, then the sequence $(\Q_N)_{N\in\N}$ is tight.  Moreover, all limit points are concentrated on weakly continuous paths which are absolutely continuous with respect to the Lebesgue measure and with density bounded by $\rho^\star$.
\end{lemma}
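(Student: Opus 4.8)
The plan is to prove tightness in $D([0,T],\mathcal{M}_+(\mathbb{R}))$ via the standard criterion for measure-valued processes: since $\mathcal{M}_+(\mathbb{R})$ with the vague topology is metrizable by pairing against a countable dense family of test functions $H \in C_K^\infty(\mathbb{R})$, it suffices to show that for each such $H$ the real-valued processes $t \mapsto \langle \alpha^N_t, H\rangle = N^{-1}\sum_x H(x/N)\omega_x(t)$ are tight in $D([0,T],\mathbb{R})$ (plus a uniform compact-containment condition at a single time, which follows from the initial stochastic domination by $\nu_{\rho^\star}$). For the one-dimensional tightness I would invoke Aldous' criterion: first control the supremum of $|\langle\alpha^N_t,H\rangle|$ uniformly in $N$, then control the oscillation across stopping times.

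The key computation is the Dynkin martingale decomposition
\[
\langle \alpha^N_t, H\rangle = \langle\alpha^N_0,H\rangle + \int_0^t N\mathcal{L}_N\big(\langle\alpha^N_s,H\rangle\big)\,ds + M^{N,H}_t,
\]
and the point is that, because the dynamics conserves particles away from the origin and the destruction at the origin only removes mass, the drift term $N\mathcal{L}_N(\langle\alpha^N,H\rangle)$ is bounded by a constant (depending on $H$ through $\|H'\|_\infty$, $\|H''\|_\infty$ and, for the $\beta<1$ boundary term, $\|H\|_\infty$) times a sum of $g(\omega_x)$'s over a finite window, which under attractiveness and the domination by $\nu_{\rho^\star}$ has bounded expectation uniformly in $N$; crucially the boundary contribution carries a factor $\alpha N^{1+\beta}\cdot N^{-1}\cdot g(\omega_0) = \alpha N^\beta g(\omega_0)$, which stays $O(1)$ in expectation precisely when $\beta < 1$. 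Similarly the quadratic variation of $M^{N,H}_t$ is $O(N^{-1})$ times a bounded quantity, so $\mathbb{E}[\sup_{s\le T}(M^{N,H}_s)^2]\to 0$. These two bounds give both the uniform $L^1$ (hence stochastic) control of $\sup_t|\langle\alpha^N_t,H\rangle|$ and the Aldous oscillation estimate $\mathbb{E}[|\langle\alpha^N_{\tau+\theta},H\rangle - \langle\alpha^N_\tau,H\rangle|] \le C(H)\theta + \sqrt{C(H)\theta}$ for stopping times $\tau\le T$ and $\theta$ small, yielding tightness of $(\mathbb{Q}_N)$.

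For the second assertion, I would argue that any limit point $\mathbb{Q}^*$ is supported on paths $t\mapsto \pi_t$ that are (i) weakly continuous, (ii) absolutely continuous w.r.t.\ Lebesgue measure with density $\le \rho^\star$. Weak continuity follows because the jumps of $\langle\alpha^N_t,H\rangle$ are of size $O(N^{-1}\|H\|_\infty)$, so in the limit the paths have no jumps when paired against any fixed $H$, hence are weakly continuous. The absolute continuity and density bound come from attractiveness and stochastic domination: by the basic coupling, if $\mu_N \preccurlyeq \nu_{\rho^\star}$ then $\omega(t) \preccurlyeq \omega^{\star}(t)$ where $\omega^\star$ is the process started from $\nu_{\rho^\star}$ (the destruction at the origin only helps this domination), and $\nu_{\rho^\star}$ is invariant for $\Lex$ which is the relevant generator for the upper process — so $\mathbb{E}[\omega_x(t)] \le R(\Phi(\rho^\star))=\rho^\star$ uniformly; passing this bound to the limit for averages over macroscopic boxes $[Nu, N(u+\varepsilon)]$ and using a standard argument (as in \cite[Chapter 4]{klscaling}) shows $\pi_t(du) \le \rho^\star\,du$.

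The main obstacle, and the only place where $\beta<1$ is genuinely used, is ensuring that the boundary term in the drift — namely $N^{-1}\cdot \alpha N^{1+\beta} g(\omega_0(s)) H(0)$ — does not blow up; for $\beta \ge 1$ this term is unbounded and tightness would fail at the origin, which is consistent with the expectation that the hydrodynamic behavior for $\beta\ge 1$ must be handled through the second-class particle representation rather than through a direct empirical-measure argument. Everything else is a routine adaptation of the arguments in \cite[Chapter 4]{klscaling} and \cite{rezakhanlou91}, so I would present the martingale estimates in detail and refer to these sources for the compact-containment and characterization-of-limit-points boilerplate.
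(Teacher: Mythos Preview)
Your approach has a genuine gap in the treatment of the boundary drift term. You write that the boundary contribution to the drift is $\alpha N^{1+\beta}\cdot N^{-1}\cdot g(\omega_0) = \alpha N^\beta g(\omega_0)$ and claim this ``stays $O(1)$ in expectation precisely when $\beta<1$.'' This is false: since $\bb{E}[g(\omega_0(s))]\leq a_0\rho^\star$, the expected drift contribution is of order $N^\beta$, which is $O(1)$ only for $\beta\leq 0$. In the Aldous criterion the relevant quantity is $\limsup_{N\to\infty}\sup_\tau \bb{E}\big[\alpha N^\beta \int_\tau^{\tau+\theta} g(\omega_0(s))\,ds\big]$, and for $0<\beta<1$ this blows up before you can send $\theta\to 0$. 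Your argument therefore only covers $\beta\leq 0$. (Your quadratic-variation bound is correct: the Glauber contribution there is $O(N^{\beta-1})$, and this is presumably what you were thinking of.)

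The paper resolves this with an additional trick: rather than bounding $\int_0^t N\Lsink f(\omega(s))\,ds = H(0)\int_0^t \Lsink \omega_0(s)\,ds$ directly, one applies Dynkin's formula a second time to the single-site observable $\omega_0$. Since $\omega_0(t)-\omega_0(0)=N\int_0^t(\Lex+\Lsink)\omega_0(s)\,ds+\text{mart.}$, the problematic Glauber drift can be rewritten as
\[
H(0)\int_0^t\Lsink\omega_0(s)\,ds=\frac{H(0)}{N}\big(\omega_0(t)-\omega_0(0)\big)-H(0)\int_0^t\Lex\omega_0(s)\,ds+\mathfrak{N}_{0,t},
\]
where $\mathfrak{N}_{0,t}$ is a martingale with quadratic variation of order $\max\{N^{\beta-1},N^{-1}\}$. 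The first term is $O(N^{-1})$, the second is an integral of a bounded (by attractiveness) local function with no $N^\beta$ factor, and the martingale vanishes exactly when $\beta<1$. Only after this substitution does the oscillation estimate go through. The rest of your outline (martingale variance, weak continuity of limit paths, absolute continuity with density $\leq\rho^\star$ via attractiveness) matches the paper.
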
 
 \begin{proof}[Proof of Lemma \ref{lem:tight}]
The proof of tightness is quite standard, see \cite[Lemma  4.1]{rezakhanlou91} for example. To prove Lemma \ref{lem:tight}, it is enough to show that for any $H \in C_c (\bb{R})$,
\[
	\lim_{\delta \rightarrow 0} \limsup_{N \rightarrow \infty} \bb{E}_{\mu_N} \bigg[ \sup_{|t-s| \leq \delta} \Big| N^{-1} \sum_{x \in \bb{Z}} H\big(\tfrac x N\big) \big(\omega_x (t) - \omega_x (s)\big)  \Big|\bigg] = 0.
\]
The only difference with \cite{rezakhanlou91} is that Dynkin's formula has an extra term coming from the destruction part of the dynamics, so that we only detail how the Glauber contribution is controlled, and refer to \cite{rezakhanlou91} for the other estimates.  For any function $f$, 
	\begin{equation}\label{eq:dynk}
		f(\omega(t))=f(\omega(0)) + \int_0^t N{\mathcal L}_N f(\omega(s))ds + \mathfrak{M}_t,
	\end{equation} where $\mathfrak{M}_t$ is a martingale whose quadratic variation is given by
	\begin{equation}
		\bb{E}_{\mu_N} \big[ \mathfrak{M}_t^2\big] = \bb{E}_{\mu_N}\bigg[\int_0^t N \Big({\mathcal L}_N f^2-2f  {\mathcal L}_N f\Big)(\omega(s))ds  \bigg].  \label{eq:QV}
	\end{equation}
	Choosing $f(\omega)=N^{-1}\sum H(\frac x N)\omega_x$, in addition to the proof performed in \cite{rezakhanlou91} we have two extra terms to control: 
	\begin{enumerate}
		\item[$(i)$] 
%		first, in \eqref{eq:dynk} 
%		\[ \int_0^t N\Lsink f(\omega(s))ds= -H(0) \alpha N^{\beta-1} \int_0^t g(\omega_0(s))ds\] which vanishes in $L^2(\bb{P}_N)$ since $\beta<1$.
%		
		first, in \eqref{eq:dynk}, the contribution of the Glauber dynamics is given by
\begin{align*}
\int_0^t N\Lsink f(\omega(s))ds&=  \int_0^t H(0) \Lsink  \omega_{0} (s)\,ds \\
&= \frac{H(0)}{N}( \omega_{0} (t) - \omega_{0} (0)) -  H(0) \int_0^t  \Lex \omega_{0} (s) \,ds  + \mathfrak{N}_{0,t},
\end{align*}
where $\mathfrak{N}_{0,t}$ is a martingale whose quadratic variation \lj{is given by the right side of \eqref{eq:QV} with $f (\omega) = N^{-1} H(0) \omega_0$. Direct calculations show that
\begin{align*}
\E_{\mu_N} \big[ \mathfrak{N}_{0,t}^2 \big]&	\leq  H(0)^2 \Big\{ \alpha N^{\beta-1} \int_0^t \E_{\mu_N} \big[g(\omega_0 (s)\big]\,ds \\
&\qquad\qquad \; + N^{-1} \int_0^t \E_{\mu_N} \big[pg(\omega_1 (s)) + (1-p) g (\omega_{-1} (s)) + g(\omega_0 (s))\big]\,ds\Big\} \\&\leq C \Big(N^{\beta - 1} + N^{-1}\Big)
\end{align*}
for some constant $C = C(\alpha,H,\rho^\star)$.} All terms but the third term on the right hand side vanish in $L^2(\bb{P}_N)$. To control the third term, since 
			\[\Lex \omega_0 = p g (\omega_{-1}) + (1-p) g (\omega_1) - g(\omega_0),\]
			and the function $g$ is Lipschitz,  it remains to prove
			\begin{equation}\label{tight1}
				\lim_{\delta \rightarrow 0} \limsup_{N \rightarrow \infty} \bb{E}_{\mu_N} \bigg[ \sup_{|t-s| \leq \delta} \bigg| \int_s^t \omega_x (\tau) d \tau  \bigg|\bigg] = 0.
			\end{equation}
			for any $x \in \bb{Z}$.  Using the  Cauchy-Schwarz inequality twice, the expectation above is bounded by
\[
				\delta^{1/2} \times \left(\bb{E}_{\mu_N} \bigg[  \int_0^T \omega_x^2 (\tau) d \tau  \bigg] \right)^{1/2}.
\]
			Since the initial measure is bounded by $\nu_{\rho^\star}$ and the process is attractive, the quantity above is bounded by
			\lj{\[ \delta^{1/2} T^{1/2} \left(\bb{E}_{\nu_{\rho^\star}}[ \omega_x^2 ] \right)^{1/2}.\]}
			This concludes the proof of \eqref{tight1}.
		
		\item[$(ii)$] second, in the integral in \eqref{eq:QV} appears the Glauber contribution 
		\lj{\[ 
		\bb{E}_{\mu_N}\bigg[\int_0^t \Big(N \Lsink f^2-2Nf \Lsink f\Big)(\omega(s))ds   \bigg]  = H(0)^2  \alpha N^{\beta-1} \int_0^t \E_{\mu_N} [g(\omega_0 (s))]\,ds \leq C N^{\beta - 1} 
		\] 
		for some constant $C = C(\alpha,H,\rho^\star)$, which vanishes in the limit as $N \rightarrow \infty$ since $\beta<1$.}
	\end{enumerate}
\end{proof}

We now characterize the macroscopic density profile to the left of the origin.

\begin{lemma}[Hydrodynamic limit to the left of the origin]\label{lem:0left}
	Assume $\beta < 1$. For any continuous function $H$ with compact support included in $(-\infty,0]$, and for any $t \geq 0$, $\delta > 0$, 
\[
		\lim_{N \rightarrow \infty}  \bb{P}_{\mu_N} \pa{\bigg| \frac{1}{N}\sum_{x \in \Z} H \left(\tfrac{x}{N}\right)\omega_x (t)  - \int_{\R} H(u) \rho (t,u)\,du \bigg| > \delta }= 0,
\]
	where $\rho(t,u)=\rho_L(t,u)$ is defined in Theorem \ref{thmAttrac} as the unique entropy solution to \eqref{hEneg}.
\end{lemma}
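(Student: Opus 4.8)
The plan is to reduce the hydrodynamic limit to the left of the origin to the known result for the process without destruction, i.e. Theorem~\ref{thm:HydroZrp}, by controlling the number of second-class particles that sit to the left of $0$. Recall the coupling introduced in Section~\ref{sec:secondclass}: the process $(\omega(t)+\zeta(t))_{t\geq 0}$ evolves as the usual asymmetric zero-range process with generator $N\Lex$, whose hydrodynamic limit is the unique entropy solution $\rho_L$ of \eqref{hEneg} on the full line, while $(\omega(t))_{t\geq 0}$ has the correct law. For a test function $H$ supported in $(-\infty,0]$, we write
\[
\frac1N\sum_{x\in\Z} H\!\left(\tfrac xN\right)\omega_x(t) = \frac1N\sum_{x\in\Z} H\!\left(\tfrac xN\right)\bigl(\omega_x(t)+\zeta_x(t)\bigr) - \frac1N\sum_{x\in\Z} H\!\left(\tfrac xN\right)\zeta_x(t),
\]
and since $\mathrm{supp}(H)\subset(-\infty,0]$ the second sum is bounded in absolute value by $\|H\|_\infty\, N^{-1}\sum_{x\leq 0}\zeta_x(t)$.

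The key input is Lemma~\ref{lem:NumberSec}, which gives $\overline{\bb E}_{\mu_N}\bigl[N^{-1}\sum_{x\leq 0}\zeta_x(t)\bigr]\to 0$ for every $\beta\in\R$; by Markov's inequality the error term above converges to $0$ in $\overline{\bb P}_{\mu_N}$-probability. Consequently the empirical average of $\omega$ against $H$ has the same limit in probability as the empirical average of $\omega+\zeta$ against $H$. Applying Theorem~\ref{thm:HydroZrp} to the latter — valid since the initial measure $\mu_N$ is (a product of Poisson laws, hence) stochastically dominated by some $\nu_{\rho^\star}$ — identifies that limit as $\int_{\R} H(u)\rho_L(t,u)\,du$, where $\rho_L$ is the unique entropy solution to \eqref{hEneg}. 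This proves the claim for compactly supported $H$; the statement as phrased already restricts to such $H$, so no further approximation is needed.

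One technical point worth spelling out is the passage from $\overline{\bb P}_{\mu_N}$ (the law of the coupled pair $(\omega,\zeta)$ with $\zeta(0)=\emptyset$) back to $\bb P_{\mu_N}$ (the law of $\omega$ alone): by construction the $\omega$-marginal of $\overline{\bb P}_{\mu_N}$ is exactly $\bb P_{\mu_N}$, so any statement about $\omega(t)$ that holds $\overline{\bb P}_{\mu_N}$-a.s.\ up to small probability transfers verbatim. The only genuine obstacle is Lemma~\ref{lem:NumberSec} itself, but that has already been established earlier in the excerpt; given it, the argument here is essentially a two-line comparison. The reason the restriction $\mathrm{supp}(H)\subset(-\infty,0]$ is essential is that to the \emph{right} of the origin the second-class particles are macroscopically visible (they carry the defect in the density that becomes the boundary condition), and there $N^{-1}\sum_{x>0}\zeta_x(t)$ does not vanish; controlling that region is precisely the content of the subsequent subsections and requires the microscopic entropy inequality of Lemma~\ref{lem:microEntIneqn}.
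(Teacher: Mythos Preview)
Your proof is correct and follows the same line as the paper's: control the $\zeta$-particles on $(-\infty,0]$ via Lemma~\ref{lem:NumberSec}, then invoke Theorem~\ref{thm:HydroZrp} for $\omega+\zeta$.

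One subtlety the paper addresses that you dismiss in your last paragraph: when the lemma is actually applied (in the proof of Theorem~\ref{thmAttrac}, case $\beta=0$), the test function is $H(u)\mathbb{1}\{u<0\}$ for a smooth $H$ on $\R$, which is \emph{not} continuous on $\R$ if $H(0)\neq 0$. The paper therefore reads the hypothesis as ``$H$ continuous on $(-\infty,0]$'' and explicitly notes that such an $H$, extended by zero, need not lie in $C_K(\R)$; hence Theorem~\ref{thm:HydroZrp} does not apply directly, and an $L^1$-approximation by smooth $H^\varepsilon\in C_K(\R)$ is performed, justified by the uniform $L^\infty$ bounds coming from attractiveness (domination by $\nu_{\rho^\star}$) and the maximum principle for $\rho_L$. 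Under your reading of the hypothesis ($H\in C_K(\R)$ with support in $(-\infty,0]$, forcing $H(0)=0$) your argument is complete as written, but then the lemma is a hair weaker than what is used downstream. Either way the fix is the one-line approximation the paper gives.
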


\begin{proof}
Recall the second-class particle's process introduced in Section \ref{sec:secondclass}.
We start by proving 
	\begin{equation}
	\label{eq:Hvanishes}
	\lim_{N \rightarrow \infty}  \overline{\bb{E}}_{\mu_N} \bigg[\frac{1}{N}\sum_{x \in \Z} |H \left(\tfrac{x}{N}\right)| \zeta_x (t) \bigg] =0.
	\end{equation}
	Since $H$ has its support included in $(-\infty,0]$, we bound 
	\[ \overline{\bb{E}}_{\mu_N} \bigg[\frac{1}{N}\sum_{x \in \Z} |H \left(\tfrac{x}{N}\right)| \zeta_x (t) \bigg]  \leqslant \|H\|_\infty \; \frac1N \sum_{x\leqslant 0} \overline{\bb{E}}_{\mu_N}  [\zeta_x(t) ]
	\] and the right hand side vanishes  from Lemma \ref{lem:NumberSec}. Note that the function $H$ is not necessarily continuous on $\R$, however by maximum principle (applied to the vanishing viscosity limit, see e.g. \cite[Lemma 4.46, p. 73]{malek1996}), and since $\omega$ is stochastically dominated by an equilibrium measure $\nu_{\rho^\star}$, $H$ can be approximated in $L^1(\R)$ by a sequence of smooth, compactly supported functions $H^\varepsilon$. We can now use straightforwardly Theorem \ref{thm:HydroZrp} and \eqref{eq:Hvanishes} to complete the proof.
\end{proof}

We now characterize the macroscopic density profile to the right of the origin.    Let $\mathcal{M}_+ (\bb{R}_+^2)$ be the space of positive Radon measures on $\bb{R}_+^2$ endowed with the vague topology.  For any integer $\ell$ and any configuration $\omega \in \Omega$, the \emph{Young measure} $\pi^{N,\ell} (du,d\lambda) \in \mathcal{M}_+ (\bb{R}_+^2)$  is defined by its action on continuous function $G: \bb{R}_+^2 \rightarrow \bb{R}$  with compact support as
\[\langle \pi^{N,\ell},G\rangle: = N^{-1} \sum_{x > \ell} G \big(\tfrac{x}{N},\omega^\ell_x\big).\]
Denote $\pi^{N,\ell}_t = \pi^{N,\ell} (\omega_t)$.  The mapping $\omega_t \mapsto (\alpha^N_t,\pi^{N,\ell}_t)$ induces a probability measure  $\Q_{N,\ell}$   on  the space $D ([0,T],\mathcal{M}_+ (\bb{R} )\times \mathcal{M}_+ (\bb{R}_+^2) )$. It is not hard to show that if $\beta < 1$, then the sequence of measures $\Q_{N,\ell}$ is tight (cf.~\textit{e.g.}~\cite[Lemma 8.1.2]{klscaling}). Denote by $\Q^*$ any limit point of the sequence $\Q_{N,\ell}$ as $N \rightarrow \infty$ then $\ell \rightarrow \infty$. To prove the hydrodynamic limit in our setting, the main step is to prove the following microscopic version of \eqref{hE0Integ1}. 

\begin{lemma}\label{lem:YoungMeasure0} Let $\beta = 0$. Assume the initial measure $\mu_N$ is stochastically  bounded by $\nu_{\rho^\star}$ for some $\rho^\star>0$.   There exists $M > 0$ such that for any non-negative test function $H \in C_K^{1,1} ((0,\infty) \times \R)$ and any constant $c \geq 0$,   $\Q^*$-almost surely,
		\begin{multline*}
			\int_0^\infty \int_0^\infty  \int_{0}^\infty  \Bigg\{ \partial_t H (t,u) (\lambda-c )^{\pm}  + (2p-1)  \partial_u H(t,u) (\Phi (\lambda) - \Phi(c) )^{\pm}\Bigg\}\pi_{t} (du,d \lambda) \,dt \\
+ M \int_0^\infty H(t,0) \big(\varrho(t)-c\big)^{\pm} \,dt \geq 0.
		\end{multline*}
\end{lemma}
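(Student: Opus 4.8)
The plan is to obtain this microscopic entropy inequality by comparing $\omega(t)$ with a carefully chosen \emph{stationary} reference process, following the scheme of \cite{landim1996hydrodynamical, bahadoran2012hydrodynamics}. Fix $c \geq 0$ and one of the two signs. Let $m^c : \Z \to \R_+$ be the solution of the balance system \eqref{a1} normalized so that $m^c_x = \Phi(c)$ for all $x \geq 0$; since $\beta = 0$, Remark \ref{remark:invariantm}(ii) shows $m^c$ is explicit and bounded, identically $\Phi(c)$ on $\Z_{\geq 0}$, tending to $\Phi(c)\,\tfrac{2p-1+\alpha}{2p-1}$ as $x \to -\infty$, the transition occurring over an $O(1)$-wide boundary layer at the origin. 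I would couple $\omega$ with a second copy $\varpi$ through the generator $N\tilde{\mathcal L}_N$, with $\varpi(0)$ distributed as the product measure $\bar\nu_{m^c(\cdot)}$, which by Lemma \ref{lem2} is invariant for $\mathcal L_N$; then $\varpi(t) \sim \bar\nu_{m^c(\cdot)}$ for every $t$, so $\varpi$ is stationary with empirical density identically $c$ on $\R_+$. Enlarging $\rho^\star$ if necessary (it may then depend on $c$, harmlessly, as the inequality is proved for each fixed $c$), both marginals of $\tilde\mu_N := \mu_N \otimes \bar\nu_{m^c(\cdot)}$ are dominated by $\nu_{\rho^\star}$.

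Next I would rerun the argument of Lemma \ref{lem:microEntIneqn} on the functional $t \mapsto N^{-1}\sum_{x \geq 0} H(t,\tfrac xN)(\omega_x(t)-\varpi_x(t))^\pm$ for $H \geq 0$ in $C_K^{1,1}((0,\infty)\times\R)$, but this time keeping the particle-destruction contribution at the origin, which for $\beta = 0$ is of macroscopic order $\alpha N^{1+\beta}\cdot N^{-1} = \alpha$ and hence survives in the limit (this is the critical feature of $\beta = 0$). The Dynkin martingale of this functional under $N\tilde{\mathcal L}_N$ has quadratic variation $O(N^{\beta-1}) \to 0$, and since $H$ has compact time-support away from $0$, the boundary terms in time vanish. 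A direct computation of $N\tilde{\mathcal L}_N$ followed by a discrete summation by parts splits the generator term into: (i) a \emph{bulk} part on $x \geq 1$ which, after replacing local functions by box-averages of size $2\ell+1$ and invoking the one-block estimate (Lemma \ref{lemOneBlock}) together with the local-ordering estimate \eqref{ordered}, becomes $N^{-1}\sum_{x\geq 1}\{\partial_t H\,(\omega_x^\ell-\varpi_x^\ell)^\pm + (2p-1)\partial_u H\,(\Phi(\omega_x^\ell)-\Phi(\varpi_x^\ell))^\pm\}$ up to $o(1)$ and errors vanishing as $\ell\to\infty$; and (ii) a \emph{boundary} part localized near $u = 0$, consisting of the edge flux across the origin and the destruction term $-\alpha N^\beta(g(\omega_0(t))-g(\varpi_0(t)))^\pm$, both of order one.

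To convert part (ii) into the boundary term $M\int_0^\infty H(t,0)(\varrho(t)-c)^\pm dt$, I would identify the local traces at the origin. The stationarity of $\varpi$ and the equivalence of ensembles give $g(\varpi_0(t))\to\Phi(c)$; for $\omega$, combining the one-block estimate with the fact that its local equilibrium near the origin is of the form $\bar\nu_{m(\cdot)}$ with $m$ solving \eqref{a1}, and using Lemma \ref{lem:0left} to identify the left trace of $\omega$ with $\rho_L(t,0)$ (so the far-left fugacity of that local $m$ equals $\Phi(\rho_L(t,0))$), one gets $g(\omega_0(t))\to\Phi(\varrho(t))$ with $\varrho$ as in \eqref{eq:f}: this is precisely where relation \eqref{a1}, tying the left and right fugacities through the factor $\tfrac{2p-1}{2p-1+\alpha}$, yields formula \eqref{eq:f}. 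Since $\Phi$ is smooth, non-decreasing and all densities are bounded, there is a finite $M$ with $(\Phi(a)-\Phi(b))^\pm \leq M(a-b)^\pm$ on the relevant range, so (ii) is bounded below by $-M H(t,0)(\varrho(t)-c)^\pm$ (after the one-block and local-ordering replacements). On the other hand, since $\varpi$ is stationary with density $c$ on $\R_+$, for $x\geq 1$ the box-average $\varpi_x^\ell(t)$ is a mean of i.i.d.\ $\bar\nu_{\Phi(c)}$-marginals, hence converges to $c$ in $L^1$ uniformly in $N,t$ as $\ell\to\infty$; replacing $\varpi_x^\ell(t)$ by $c$ in (i) turns the bulk sum into $\langle \pi^{N,\ell}_t,\, \partial_t H\,(\cdot-c)^\pm + (2p-1)\partial_u H\,(\Phi(\cdot)-\Phi(c))^\pm\rangle$. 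Collecting everything, taking $N\to\infty$ then $\ell\to\infty$, passing to a limit point $\Q^*$ of $(\Q_{N,\ell})$ (using continuity of the functional for the vague topology on the a.s.\ absolutely continuous, density-bounded limiting Young measures, and uniform integrability from the bound $\rho^\star$), and finally letting $\varepsilon\to 0$, gives the claimed inequality $\Q^*$-almost surely.

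The main obstacle is part (ii), the analysis of the dynamics at the defect: separating and controlling the edge flux and the order-one destruction term, and in particular justifying $g(\omega_0(t))\to\Phi(\varrho(t))$ uniformly near the origin by combining the one-block estimate, the local-equilibrium balance relation \eqref{a1}, and the left trace $\rho_L(t,0)$ from Lemma \ref{lem:0left}. This step is where the criticality $\beta = 0$ — destruction of the same macroscopic order as the transport current — and the attractiveness of the dynamics are essential.
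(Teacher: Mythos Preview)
Your setup --- coupling $\omega$ with a stationary $\varpi$ distributed according to the product invariant measure $\bar\nu_{m^c(\cdot)}$ of Remark~\ref{remark:invariantm}, normalized so that $m^c_x=\Phi(c)$ on $x\geq 0$ --- is exactly the one the paper uses. The divergence, and the gap, is in your part~(ii).

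By restricting the Dynkin martingale to $\sum_{x\geq 0}$, you force yourself to control the microscopic edge flux and destruction term at the \emph{single sites} $x=-1,0$. You then assert $g(\omega_0(t))\to\Phi(\varrho(t))$ in time average, invoking ``one-block estimate $+$ local equilibrium of the form $\bar\nu_{m(\cdot)}$ near the origin $+$ Lemma~\ref{lem:0left}''. None of these tools delivers pointwise information at a fixed site: Lemma~\ref{lemOneBlock} controls $N^{-1}\sum_{|x|\leq AN}\tau_x V_\ell$, a macroscopic spatial average, not $V_\ell$ at one point, and Lemma~\ref{lem:0left} identifies the macroscopic density strictly to the left of the origin, not the occupation at $x=0$. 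The heuristic that $\omega$ sits near the origin in a local equilibrium $\bar\nu_{m(\cdot)}$ with $m$ solving \eqref{a1} and left fugacity $\Phi(\rho_L(t,0))$ is precisely the statement one would like to be true, but it is not proved anywhere in the paper and establishing it directly would be considerably harder than the lemma itself.

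The paper sidesteps this completely. It applies the \emph{full-line} inequality \eqref{MicroEntIne0} --- in whose proof the destruction contribution is simply discarded, since it has the favorable sign --- to the test function $H_{\gamma,\delta}(t,u)=\psi_\gamma*\mathbb 1\{\cdot>-\delta\}(u)\,H(t,u)$, and lets $\gamma\to 0$. This produces a boundary term not at the origin but at the \emph{macroscopic} abscissa $u=-\delta$, namely $\int_0^\infty(2p-1)H(t,-\delta)\big(\Phi(\rho_L(t,-\delta))-\tfrac{\alpha+2p-1}{2p-1}\Phi(c)\big)^\pm dt$, in which both entries are already identified: $\rho_L(t,-\delta)$ by Lemma~\ref{lem:0left}, and the constant $\tfrac{\alpha+2p-1}{2p-1}\Phi(c)$ as the left-side fugacity of the stationary $\varpi$. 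The Lipschitz bound on $\Phi$ then converts this into $M\int_0^\infty H(t,-\delta)\big(R(\tfrac{2p-1}{\alpha+2p-1}\Phi(\rho_L(t,-\delta)))-c\big)^\pm dt$, and $\delta\to 0$ yields $(\varrho(t)-c)^\pm$. Thus formula \eqref{eq:f} emerges from the \emph{left} structure of $\varpi$'s invariant measure together with the macroscopic left trace of $\omega$; no microscopic analysis of $\omega$ at the defect is ever needed.
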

\begin{proof}
	Fix $c \geq 0$.    We couple two processes $(\omega (t),\,\varpi (t))$ according to the generator $N \tilde{\mathcal{L}}_N$ as described in Subsection \ref{sebsection:coupling}.  Denote by $\tilde{\mu}_{N,i},\,i=1,2$, the $i$-th marginal of $\tilde{\mu}_{N}$. Take the first marginal $\tilde{\mu}_{N,1}$ of $\tilde{\mu}_{N}$ to be the initial measure $\mu_N$ defined in Subsection \ref{subsection:mainresult}, and the second marginal $\tilde{\mu}_{N,2}$ to be the invariant measure of the process  with generator $\mathcal{L}_N$,  with the four parameters $\{c_i\}_{1 \leq i \leq 4}$ as stated in Remark \ref{remark:invariantm} given by
	\[c_1 = -\frac{\alpha N^{\beta}}{2p-1} \Phi (c),\quad c_2 = \frac{\alpha N^{\beta}+2p-1}{2p-1} \Phi (c),\quad c_3 = 0,\quad c_4 = \Phi (c).\]
	It is easy to check 
	\[
	\lim_{N \rightarrow \infty} {\E}_{\tilde{\mu}_{N,2}} [\omega_{[Nu]}] = \begin{cases}
		R \big(\tfrac{2p-1}{\alpha + 2p-1} \Phi (c)\big) \quad&\text{if $u < 0$},\\
		c \quad&\text{if $u  > 0$},
	\end{cases}
	\]
where $R$  is the inverse function of  $\Phi$.
	Since the process $\varpi (t)$ is time invariant, by law of large numbers, for any $\varepsilon > 0$ and for any non-negative test function $H$ with compact support in $(0,+\infty) \times \bb{R}$, we have
	
	\begin{align*}
		&\lim_{\ell \rightarrow \infty}\, \limsup_{N \rightarrow \infty}\, \tilde{\mathbb{P}}_{\tilde{\mu}_N}\,  \bigg[ \int_0^\infty  \frac1N \sum_{x \leq 0}   H (t,\tfrac x N) \big| \varpi_x^\ell (t) - R \big(\tfrac{2p-1}{\alpha + 2p-1} \Phi (c) \big) \big| 
		dt \geq  \varepsilon \bigg]  = 0,\\
		&\lim_{\ell \rightarrow \infty}\, \limsup_{N \rightarrow \infty}\, \tilde{\mathbb{P}}_{\tilde{\mu}_N}\,  \bigg[ \int_0^\infty  \frac1N \sum_{x \geq 0}   H (t,\tfrac x N) | \varpi_x^\ell (t) -c|
		dt \geq  \varepsilon \bigg] = 0.
	\end{align*}
For any $\delta,\; \gamma > 0$, define  $H_{\gamma,\delta} (t,u) =  \psi_\gamma * \mathbb{1}\{\cdot > - \delta\}(u) H (t,u)  $, where $\psi_\gamma$ is an \emph{approximation of the identity}, so that $H_{\gamma,\delta} (t,u)$ converges as $\gamma \to 0$ to $H(t,u) \mathbb{1}\{u > - \delta\}$. Taking $H_{\gamma,\delta}$ as test function in \eqref{MicroEntIne0}, and letting $\gamma \rightarrow 0$, we have $\mathbb{Q}^*$-almost surely,
	\begin{align*}
		&\int_{0}^\infty \int_{0}^\infty  \int_0^\infty \bigg\{\partial_t H (t,u) \big(\lambda - c\big)^{\pm}   + (2p-1)\partial_u H (t,u)  \big(\Phi (\lambda) - \Phi (c)\big)^{\pm} \bigg\} \pi_{t} (du,d \lambda)  \,dt \\
		&+ 	\int_{0}^\infty \int_{-\delta}^0 \bigg\{\partial_t H (t,u) \Big( \rho_L (t,u) - R \big(\tfrac{2p-1}{\alpha + 2p-1} \Phi (c) \big)\Big)^{\pm}  + (2p-1)\partial_u H (t,u) \Big(\Phi (\rho_L (t,u)) - \tfrac{2p-1}{\alpha + 2p-1}\Phi (c) \Big)^{\pm} \bigg\} \,du\,dt \\
		&+ \int_{0}^\infty (2p-1) H (t,-\delta) \Big( \Phi (\rho_L (t,-\delta)) - \tfrac{\alpha+2p-1}{ 2p-1}\Phi (c)\Big)^{\pm} \,dt \geq 0.
	\end{align*}
	Since $\rho_L (t,u)$ is bounded, the second line above is bounded from above by $C \delta$ for some finite constant $C$. Furthermore, because $\Phi$ is increasing and Lipschitz continuous, $|\Phi (u) - \Phi (v)| \leq a_0 |u-v|$, we have
	\[\Big( \Phi (\rho_L (t,-\delta)) - \tfrac{\alpha + 2p-1}{ 2p-1}\Phi (c)\Big)^{\pm} \leq \frac{a_0 (\alpha+2p-1)}{2p-1} \Big(R \big(\tfrac{2p-1}{\alpha + 2p-1} \Phi (\rho_L (t,-\delta))\big) - c \Big)^{\pm}. \]
	We conclude the proof by letting $\delta \rightarrow 0$.
\end{proof}

\medskip

The rest of the proof is quite standard \cite[Section 5]{rezakhanlou91}, and we only sketch the proof here.  We first introduce the concept of \emph{measure-valued entropy solutions}. Let $\pi_{t,u} (d \lambda)$ be a measurable map from $\bb{R}_+^2$ into the space $\mc{M}_+ (\bb{R}_+)$. We say $\pi_{t,u} (d \lambda)$ is \emph{bounded} if there exists a finite constant $C$ such that $\pi_{t,u} (d \lambda)$ is supported in $[0,C]$ for a.e.~$(t,u) \in \R_+^2$.  We say $\pi_{t,u} (d \lambda)$ is a \emph{measure-valued entropy solution} to \eqref{hE0} if 	

\begin{itemize}
	\item there exists $M > 0$ such that for any non-negative test function $H \in C_K^{1,1} ((0,\infty) \times \R)$ and any constant $c \geq 0$,
\begin{multline}\label{mve_integral}
		\int_0^\infty \int_0^\infty  \bigg\{ \partial_t H (t,u) \int_0^\infty (\lambda-c )^{\pm} \pi_{t,u} (d \lambda) + (2p-1)\partial_u H(t,u) \int_{0}^\infty (\Phi (\lambda) - \Phi(c) )^{\pm}  \pi_{t,u} (d \lambda) \bigg\}  \,du\,dt \\
		+ M \int_0^\infty H (t,0) (\varrho (t)-c)^{\pm} \,dt \geq 0;
\end{multline}
	\item for every $A > 0$ 
	\begin{align}\label{mve_initial}
		\lim_{t \rightarrow 0} \int_0^A \int_{0}^\infty |\lambda - \rho_0 (u)| \pi_{t,u} (d \lambda) \,du = 0.
	\end{align}
\end{itemize}
It is obvious that if $\rho(t,u)$ is an entropy solution to \eqref{hE0}, then $\delta_{\rho(t,u)} (d\lambda)$ is a bounded measure-valued entropy solution. By \cite[Theorem 2.1]{bahadoran2012hydrodynamics}, the converse is also true: there exists a unique bounded measure-valued entropy solution $\pi_{t,u} (d \lambda)$ to \eqref{hE0}, which is of the form $\pi_{t,u} (d \lambda) =\delta_{\rho(t,u)} (d\lambda)$, $\rho(t,u)$ being the unique entropy solution.

\medskip

 We now state a lemma concerning the properties of the limiting measure $\Q^*$.
 
 \begin{lemma}\label{lem:Qstar}
 Assume the initial measure $\mu_N$ is stochastically  bounded by $\nu_{\rho^\star}$ for some $\rho^\star$.  For all $\beta \in \R$, we have $\Q^*$-almost surely,
 \begin{enumerate}[(i)]
 	\item $\alpha_t (du)$ and $\pi_t (du,d\lambda)$ are both  absolutely  continuous with respect to the Lebesgue measure $du$, whose densities are denoted by $\rho (t,u) $ and $\pi_{t,u} (d \lambda)$  respectively. Moreover, 
 	\[\int_0^\infty \lambda \pi_{t,u} (d \lambda) = \rho (t,u);\]
 	
 	\item $\pi_{t,u} (d \lambda)$ is bounded, with support on $[0,\rho^\star]$ for almost every $(t,u) \in \R_+^2$.
 \end{enumerate}
 \end{lemma}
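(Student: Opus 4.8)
The plan is to deduce both parts from two facts already at hand: the regularity of limit points provided by Lemma~\ref{lem:tight}, and the stochastic domination of $\omega(t)$ by $\nu_{\rho^\star}$ at every time $t\geq0$ under $\bb{P}_{\mu_N}$ --- which holds because the asymmetric zero-range part is attractive and the destruction dynamics only removes particles, and is valid for every $\beta\in\R$. For the absolute continuity in~(i), the statement for $\alpha_t$ is precisely Lemma~\ref{lem:tight}, giving $\Q^*$-a.s.\ $\alpha_t(du)=\rho(t,u)\,du$ with $0\leq\rho(t,u)\leq\rho^\star$. For $\pi_t$ I would identify its $u$-marginal: for $G\in C_c((0,\infty))$ the functional $\langle\pi^{N,\ell}_t,G(u)\rangle=N^{-1}\sum_{x>\ell}G(x/N)$ is deterministic, and since $\operatorname{supp}G$ is a compact subset of $(0,\infty)$ the constraint $x>\ell$ is inactive for $N$ large, so this is a Riemann sum converging to $\int_0^\infty G(u)\,du$ as $N\to\infty$ then $\ell\to\infty$. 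Hence the $u$-marginal of $\pi_t$ is Lebesgue measure on $\R_+$, and a disintegration yields $\Q^*$-a.s.\ $\pi_t(du,d\lambda)=du\,\pi_{t,u}(d\lambda)$ with $\pi_{t,u}$ a probability measure for a.e.\ $(t,u)$.

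Next I would prove the boundedness statement~(ii). Since $G(u)\mathbb{1}\{\lambda>a\}$ is neither continuous nor compactly supported in $\lambda$, it cannot be tested directly against $\Q_{N,\ell}\Rightarrow\Q^*$; instead, for nonnegative $G\in C_c((0,\infty))$ and $\rho^\star<a'<a$, I would pick a continuous $h:\R_+\to[0,1]$ supported in $(a',\infty)$ with $h\equiv1$ on $[a,\infty)$, together with a cutoff $\phi_K\in C_c(\R_+)$ equal to $1$ on $[0,K]$, so that $G(u)h(\lambda)\phi_K(\lambda)\in C_c(\R_+^2)$, this function is bounded above by $G(u)\mathbb{1}\{\lambda>a'\}$, and it increases to $G(u)h(\lambda)\geq G(u)\mathbb{1}\{\lambda\geq a\}$ as $K\to\infty$. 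Passing to the limit along $\Q_{N,\ell}$,
\[
\E_{\Q^*}\Big[\int_0^T\langle\pi_t,G(u)h(\lambda)\phi_K(\lambda)\rangle\,dt\Big]\leq\limsup_{\ell\to\infty}\limsup_{N\to\infty}\E_{\bb{P}_{\mu_N}}\Big[\int_0^T N^{-1}\sum_{x>\ell}G(\tfrac xN)\,\mathbb{1}\{\omega_x^\ell(t)>a'\}\,dt\Big],
\]
and by stochastic domination the right-hand side is at most $T\|G\|_\infty\,|\operatorname{supp}G|\,\sup_\ell\bb{P}_{\nu_{\rho^\star}}(\omega_0^\ell>a')$, which tends to $0$ as $\ell\to\infty$ since $\omega_0^\ell\to\rho^\star<a'$ in $\nu_{\rho^\star}$-probability. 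Letting $K\to\infty$ by monotone convergence, then $a\downarrow\rho^\star$, and finally ranging $G$ over $C_c((0,\infty))$, we obtain $\Q^*$-a.s.\ that $\pi_{t,u}$ is supported in $[0,\rho^\star]$ for a.e.\ $(t,u)$.

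Finally, for the moment identity in~(i): once $\pi_{t,u}$ is known to be supported in $[0,\rho^\star]$, the same truncation applied to $G(u)\lambda$ --- using that $\sup_\ell\E_{\nu_{\rho^\star}}[\omega_0^\ell\,\mathbb{1}\{\omega_0^\ell>K\}]\to0$ as $K\to\infty$, which holds by uniform integrability of the block averages since $\nu_{\rho^\star}$ has finite second moment --- shows that $\langle\pi^{N,\ell}_t,G(u)\lambda\rangle=N^{-1}\sum_{x>\ell}G(x/N)\,\omega_x^\ell(t)$ converges to $\langle\pi_t,G(u)\lambda\rangle$. A summation by parts rewrites the microscopic sum as $N^{-1}\sum_y\widehat{G}^{N,\ell}(y)\,\omega_y(t)$, where $\widehat{G}^{N,\ell}(y)$ is the average of $G(\cdot/N)$ over $\{x:\,|x-y|\leq\ell,\;x>\ell\}$; since $\operatorname{supp}G\subset(0,\infty)$, for $N$ large this block is the full symmetric window around $y$, so $|\widehat{G}^{N,\ell}(y)-G(y/N)|$ is at most the modulus of continuity of $G$ at scale $\ell/N$, and stochastic domination makes $\E_{\bb{P}_{\mu_N}}[N^{-1}\sum_y|\widehat{G}^{N,\ell}(y)-G(y/N)|\,\omega_y(t)]$ vanish as $N\to\infty$. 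Combining with $\langle\alpha^N_t,G\rangle\to\langle\alpha_t,G\rangle$ gives $\int_0^\infty G(u)\big(\int_0^\infty\lambda\,\pi_{t,u}(d\lambda)\big)\,du=\int_0^\infty G(u)\,\rho(t,u)\,du$ for every $G\in C_c((0,\infty))$ and a.e.\ $t$, which is the claimed identity $\int_0^\infty\lambda\,\pi_{t,u}(d\lambda)=\rho(t,u)$.

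I expect the main difficulty to be purely technical: the two observables that actually carry the content, $G(u)\lambda$ and $G(u)\mathbb{1}\{\lambda>a\}$, both lie outside $C_c(\R_+^2)$, so every passage to the limit must go through a truncation in $\lambda$, and the two parts have to be established in the right order --- boundedness~(ii) first, then the moment identity in~(i), which relies on it. Beyond that, everything is a routine adaptation of the Young-measure arguments in \cite[Chapter 8]{klscaling} and \cite[Section 5]{rezakhanlou91}.
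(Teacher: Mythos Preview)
Your proposal is correct and follows essentially the same route as the paper, which omits the proof and merely points to \cite[Lemma 5.5]{rezakhanlou91} together with the key identity $\langle \alpha^N, H\rangle=\langle \pi^{N,\ell}, H\lambda\rangle+o_N(1)$; you recover exactly this identity via your summation-by-parts in the last paragraph, and your use of attractiveness for the support bound is precisely the ``main ingredient'' the paper names. One small point of presentation: the step where you identify the $u$-marginal of $\pi_t$ with Lebesgue measure by testing against $G(u)$ (constant in $\lambda$) already needs the no-mass-escape conclusion of~(ii), not just the moment identity --- so (ii) should come before \emph{both} the disintegration and the moment relation, not only before the latter as your closing remark suggests; otherwise the argument is sound.
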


\begin{proof}
Since the proof is the same to \cite[Lemma 5.5]{rezakhanlou91}, we only sketch it here for completeness.  
\begin{enumerate}\item[$(i)$] For non-negative continuous functions $H,G: \R_+ \rightarrow \mm{\R_+}$ with compact support,
\[\<\pi^{N,\ell}_t,HG\> = \frac{1}{N} \sum_{x > \ell} H(\tfrac{x}{N}) G (\omega_x^\ell (t)) \leq \|G\|_\infty  \frac{1}{N} \sum_{x > \ell} H(\tfrac{x}{N}).\]
Letting $N,\,\ell \rightarrow \infty$, one sees that the measure $\pi_t (du,d\lambda)$ is absolutely  continuous with respect to the Lebesgue measure $du$. Since for any continuous function  with compact support $H: \bb{R} \rightarrow \mm{\bb{R}_+}$, 
\[\langle \alpha^N_t, H \rangle    = \langle \pi^{N,\ell}_t, H \lambda \rangle\]
plus error terms that vanish uniformly as $N\to\infty$, one has
\[\int_0^\infty H(u) \alpha_t (du) = \int_0^\infty \int_0^\infty H(u) \lambda \pi_{t,u} (d \lambda) du.\]
Therefore,
\[\alpha_t (du) = \int_0^\infty \lambda \pi_{t,u} (d \lambda) du,\]
which implies the absolute continuity of $\alpha_t (du)$, and also the relation between $\rho(t,u)$ and $\pi_{t,u}$ in the statement of $(i)$.

\item[$(ii)$] For any $\varepsilon > 0$ and any non-negative continuous function $H: \R_+ \rightarrow \mm{\R_+}$ with compact support, by attractiveness, 
\[	\mathbb{E}_{\mu_{N}} \Big[ \<\pi^{N,\ell}_t,H \mathbb{1} \{\lambda > \rho^\star + \varepsilon \}\>\Big] \leq  \frac{1}{N} \sum_{x > \ell} H(\tfrac{x}{N}) \mathbb{P}_{\nu_{\rho^\star}} \Big[\omega_x^\ell (t) > \rho^\star + \varepsilon\Big].\]
We conclude the proof by letting $N,\ell \rightarrow \infty$ and using the law of large numbers. \end{enumerate} \end{proof}

%The above lemma is an analogue of \cite[Lemma 5.5]{rezakhanlou91}.  We omit the proof here. The main ingredients are attractiveness of the process and the following observation: for any continuous function  with compact support $H: \bb{R}_+ \rightarrow \bb{R}$,
%\[
%	\langle \alpha^N, H \rangle    = \langle \pi^{N,\ell}, H \lambda \rangle
%\]
%plus error terms that vanish uniformly as $N\to\infty$. 

\medskip

Now we are ready to prove Theorem \ref{thmAttrac} for $\beta = 0$.

\begin{proof}[Proof of Theorem \ref{thmAttrac} in the case $\beta = 0$.] 
We lay out the main arguments, but omit some technical details, for which we refer the reader to \cite[Lemma 5.2]{rezakhanlou91}. Fix a smooth function $H$ with compact support, that we decompose as $H(u) = H(u) \mathbb{1}\{u \geq  0\} + H(u) \mathbb{1}\{u <0\}$. The part corresponding to $H(u) \mathbb{1}\{u <0\}$ is treated in Lemma \ref{lem:0left}, so that we only need to prove that for any function $H$ which is continuous and compactly supported in $\R_+$, 
\[
		\lim_{N \rightarrow \infty}  \bb{P}_{\mu_N} \pa{\bigg| \frac{1}{N}\sum_{x \geq 0} H \left(\tfrac{x}{N}\right)\omega_x (t)  - \int_{0}^{+\infty} H(u) \rho (t,u)\,du \bigg| > \delta }= 0,
\]
where $\rho=\rho_R$ is the unique entropy solution to  \eqref{hE0} with  boundary condition given by  \eqref{eq:f}.
The proof is divided into several steps. 
	
	\medskip
	
\emph{Step 1.} We first assume the initial density profile $\rho_0$ restricted to $\R_+$ is a Lipschitz continuous function with compact support, so that we can use the proof of \cite[Lemma 5.6]{rezakhanlou91}. From the latter, it follows straightforwardly that the initial condition  \eqref{mve_initial}  holds $\Q^*$-almost surely. By Lemmas \ref{lem:YoungMeasure0} and \ref{lem:Qstar}, $\pi_{t,u} (d \lambda)$ is bounded and the inequality \eqref{mve_integral} holds $\Q^*$-almost surely. This implies the limit $\Q^*$ is concentrated on trajectories such that $\pi_{t,u} (d\lambda)$ is a bounded measure-valued entropy solution to \eqref{hE0}. Since there exists a unique bounded measure-valued entropy solution, which is given by a Dirac measure $\delta_{\rho(t,u)} (d\lambda)$, and by Lemma \ref{lem:Qstar}, $\Q^*$ is concentrated on trajectories such that $\rho(t,u)$ is entropy solution to \eqref{hE0}. By the uniqueness of the entropy solution, we conclude the proof for Lipschitz continuous  initial density profile $\rho_0$ with compact support.
	
	\medskip
	
\emph{Step 2.}  Now we assume the initial density profile $\rho_0$ is integrable on $\R_+$ and bounded.  On $\R_+$, we approximate $\rho_0$ by $\rho_{0,\epsilon} \in C_K^\infty (\R_+)$ such that
\[\lim_{\epsilon \rightarrow 0} \int_{\R} \big|(\rho_{0,\epsilon} - \rho_0 )(u)\big| du = 0,\]
and leave $\rho_{0,\epsilon}=\rho_0$ unchanged on $(-\infty,0)$.
We shall use the coupling introduced in Subsection \ref{sebsection:coupling}. We take the initial measure $\tilde{\mu}_N^\epsilon$ of the coupled process to be such that
\[\tilde{\mu}_{N,1}^\epsilon = \mu_N, \quad \tilde{\mu}_{N,2}^\epsilon (d \omega )= \bigotimes_{x\in\Z}\mathcal{P}_{\rho_{0,\epsilon}(x/N)}(d \omega_x)\] 
and that
\[\omega_x \leq \varpi_x \quad \text{if} \quad \rho_0 (\tfrac x N) \leq \rho_{0,\epsilon} (\tfrac x N), \quad \text{and} \quad  \omega_x \geq \varpi_x \quad \text{if} \quad \rho_0 (\tfrac x N) \geq \rho_{0,\epsilon} (\tfrac x N)\]
for all $x \in \Z$ and with probability one with respect to $\tilde{\mu}_N^\epsilon$.  Since under the coupling, the number of discrepancies only decreases in time, for any $t > 0$,
\[\lim_{N \rightarrow \infty} \tilde{\bb{E}}_{\tilde{\mu}_N^\epsilon} \bigg[ \frac{1}{N} \sum_{x \geq 0} \big|\omega_t (x) - \varpi_t (x)\big| \bigg] \leq 
\lim_{N \rightarrow \infty} \tilde{\bb{E}}_{\tilde{\mu}_N^\epsilon} \bigg[ \frac{1}{N} \sum_{x \in \Z} \big|\omega_0 (x) - \varpi_0 (x)\big| \bigg] \leq \int_0^{+\infty} \big|\rho_{0,\epsilon}(u) - \rho_0(u)\big|du,\]
which vanishes as $\varepsilon \to 0$.
Let $\tilde{\Q}_{N,\epsilon}$ be the law of the pair $(\alpha^N_t,\alpha^{N,\epsilon}_t)$ when $(\omega(t),\varpi (t))$ starts from $\tilde{\mu}_N^\epsilon$, where
\[\alpha^{N,\epsilon}_t = \frac{1}{N} \sum_{x \in \Z} \varpi_x (t) \delta_{x/N} (du). \] 
Let $\tilde{\Q}_{\epsilon}$  be any limit point of $\tilde{\Q}_{N,\epsilon}$ as $N \rightarrow \infty$, whose support is concentrated on absolutely continuous measures 
$(\alpha_t,\alpha^{\varepsilon}_t)=(\delta_{\rho(t,u)}du,\delta_{\rho_{\varepsilon}(t,u)}du)$. Passing to the limit in the last inequality, we have

\begin{equation}
\label{eq:rhorhoep}
\lim_{\varepsilon \to 0}\E_{\tilde{\Q}_{\epsilon}} \bigg[\int_{0}^\infty \big|\rho(t,u) - \rho_{\epsilon} (t,u)\big| du\bigg] \leq \lim_{\varepsilon \to 0}\int_{\R} \big|\rho_{0,\epsilon}(u) - \rho_0 (u)\big|du=0,
\end{equation}
where,  as a consequence of the first step  $\rho_{\epsilon} (t,u)$ is the unique entropy solution to \eqref{hE0} with initial datum $\rho_{0,\epsilon}$ and with boundary condition \eqref{eq:f} independent of $\varepsilon$. To justify the last statement, note that the boundary condition  \eqref{eq:f} only depends on the density left of the origin, which is initially independent from $\varepsilon$.  By the $L^1$-contraction principle (see \cite[Theorem 7.28]{malek1996} for instance) $\rho_{\epsilon}$ converges in $L^1$ to the solution of \eqref{hE0} with  boundary condition given by  \eqref{eq:f} and initial condition $\rho_0$. We conclude the proof by using \eqref{eq:rhorhoep} which proves that $\rho(t,u)$ must be the solution of \eqref{hE0} with  b.c. \eqref{eq:f}.

\medskip

\emph{Step 3.} \lj{ To extend the result to bounded  initial datum $\rho_0$, we follow \cite[Theorem 5.1]{rezakhanlou91} and use the approximated functions $\rho_{0,k} (u) = \rho_0 (u) \mathbb{1}\{u \leq k \}$ for $k > 0$.   We take the initial measure $\tilde{\mu}_{N,k}$ of the coupled process to be such that
\[\tilde{\mu}_{N,k,1} = \mu_N, \quad \tilde{\mu}_{N,k,2} (d \omega )= \bigotimes_{x\in\Z}\mathcal{P}_{\rho_{0,k}(x/N)}(d \omega_x)\] 
and such that $\omega_x = \varpi_x$ for $x\leq k N$. By \cite[Lemma 5.7]{rezakhanlou91}, for $k$ large enough, there exists a constant $c_0$ such that
\[\lim_{N \rightarrow \infty} \tilde{\bb{P}}_{\tilde{\mu}_{N,k}} \Big[ \omega_x (t) = \varpi_x (t)\; \text{ for any}\; x \leq (k - c_0 t) N\Big] = 1.\]
Similar to \emph{Step 2}, denote by $\tilde{\Q}_{N,k}$  the law of the pair $(\alpha^N_t,\alpha^{N,k}_t)$ when $(\omega(t),\varpi (t))$ starts from $\tilde{\mu}_{N,k}$, and by $\tilde{\Q}_{k}$  any limit point of $\tilde{\Q}_{N,k}$ as $N \rightarrow \infty$. Passing to the limit in the last expression,
\[\E_{\tilde{\Q}_{k}} \big[\rho(t,u) = \rho_k (t,u) \; \text{ for any}\; u \leq k - c_0 t\big] = 1.\]
In particular,
\[\E_{\tilde{\Q}_{k}} [\rho_R (t,u) = \rho_{k,R} (t,u) \; \text{ for any}\; 0 \leq u \leq k - c_0 t] = 1.\]
Since $\rho_{k,R} (t,u)$ converges to the entropy solution to \eqref{hE0} with initial value $\rho_0$ as $k \rightarrow \infty$, we conclude the proof by letting $k \rightarrow \infty$ in the case $\beta = 0$. }
\end{proof}

\subsection{The case $0 < \beta < 1$.}  In this subsection we prove Theorem \ref{thmAttrac} for the case $0 < \beta < 1$. Following the same steps presented in the last subsection, we only need to prove the following lemma.

\begin{lemma}
\label{lem:betaleq1}
Assume $0 < \beta <1$. Every limit point $\Q^*$  of the sequence $\Q_{N,\ell}$ is concentrated on paths $\pi_{t,u} (d\lambda)$ such that
\[
	\lim_{u \rightarrow 0} \int_0^{T} \int_{0}^{\infty}  \Phi (\lambda )\,\pi_{t,u} (d\lambda) \,dt= 0.
\]
\end{lemma}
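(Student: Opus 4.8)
The statement says that for $0<\beta<1$ the macroscopic current through the origin vanishes, i.e. $f(t)=0$ in \eqref{hEpos}. The key microscopic fact is that the origin acts as a perfect absorber on the hyperbolic time scale: the destruction rate $\alpha N^{1+\beta}g(\omega_0)$ in the accelerated generator is of order $N^{1+\beta}\gg N$, so the occupation at the origin is forced down to a level where $g(\omega_0)$ is negligible. The plan is to make this precise by combining the second-class-particle coupling with the one-block estimate, exactly along the lines already set up in Sections \ref{sec:secondclass} and \ref{sebsection:coupling}.

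\textbf{Step 1: bound on $g(\omega_0)$ integrated in time.} First I would show that
\[
\lim_{N\to\infty}\E_{\mu_N}\Big[\int_0^T g(\omega_0(s))\,ds\Big]=0.
\]
This follows from the martingale decomposition already used in the proof of Lemma \ref{lem:secondclass}: $K_t-\int_0^t\alpha N^{1+\beta}g(\omega_0(s))\,ds$ is a mean-zero martingale, so $\alpha N^{1+\beta}\E_{\mu_N}[\int_0^T g(\omega_0(s))\,ds]=\overline{\bb E}_{\mu_N}[K_T]\leq C\min\{\alpha N^{1+\beta},N\}T=CNT$ by Lemma \ref{lem:secondclass}. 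Dividing by $\alpha N^{1+\beta}$ gives a bound of order $N^{-\beta}\to 0$. Since $g(k)\geq g(1)>0$ for $k\geq1$, this also controls the time spent with at least one particle at the origin; more importantly it controls $\int_0^T g(\omega_0(s))\,ds$ directly, which is what feeds into the current.

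\textbf{Step 2: from a microscopic current identity to the boundary term.} Next I would write the conservation law for the mass to the right of a microscopic box near the origin. Applying Dynkin's formula to $f(\omega)=N^{-1}\sum_{x>0}\varphi(x/N)\omega_x$ for a suitable cutoff $\varphi$, or more directly using the Young-measure machinery already in place, the flux across the site $\lfloor \varepsilon N\rfloor$ is expressed via $\Phi$ of the empirical density there, while the flux created at the origin is $\int_0^t(2p-1)N g(\omega_{-1}(s))$-type terms balanced against the destruction $\alpha N^{1+\beta}\int_0^t g(\omega_0(s))\,ds$. Using Step 1, the destruction term vanishes in the limit, and by the one-block estimate (Lemma \ref{lemOneBlock}) and Lemma \ref{lem:Qstar} the empirical flux at $\lfloor \varepsilon N\rfloor$ converges, $\Q^*$-a.s., to $\int_0^T\int_0^\infty\Phi(\lambda)\,\pi_{t,\varepsilon}(d\lambda)\,dt$ up to $o_\varepsilon(1)$. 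Comparing with the mass evolution in the shrinking strip $(0,\varepsilon)$, which is $O(\varepsilon)$ because densities are bounded by $\rho^\star$ (Lemma \ref{lem:tight}, Lemma \ref{lem:Qstar}), forces $\int_0^T\int_0^\infty\Phi(\lambda)\,\pi_{t,\varepsilon}(d\lambda)\,dt\to 0$ as $\varepsilon\to0$, which is the claim after identifying $\lim_{u\to0}$ with the $\varepsilon\to0$ limit of the averaged Young measure (standard, using weak continuity of the limiting trajectories and that $\pi_{t,u}$ has a version with the requisite boundary regularity, cf.~\cite[Chapter 8]{klscaling}).

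\textbf{Main obstacle.} The delicate point is Step 2: matching the \emph{microscopic} flux at a mesoscopic site with the \emph{boundary trace} of the limiting Young measure, and controlling the replacement of $N g(\omega_x)$ by $\Phi$ near the origin uniformly in the position of the mesoscopic box. The one-block estimate of Lemma \ref{lemOneBlock} is stated for $\beta<1$ and for sums over $|x|\leq AN$, so it applies; but one must be careful that the contribution of sites in the strip $0<x<\varepsilon N$ — where the relative-entropy / local-equilibrium arguments are weakest because of the nearby source — is genuinely $o_\varepsilon(1)$ and not merely $o_N(1)$ for fixed $\varepsilon$. This is handled by taking $N\to\infty$ first (so the one-block and Young-measure machinery kicks in) and $\varepsilon\to0$ afterwards, together with the a priori bound by $\nu_{\rho^\star}$ which makes the strip contribution $O(\varepsilon)$ deterministically; the structure mirrors \cite[Section 5]{rezakhanlou91} and \cite[Proposition 5.1]{landim1996hydrodynamical}, so I would present it as a short adaptation rather than redo it in full.
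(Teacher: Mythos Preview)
Your approach is genuinely different from the paper's. The paper couples $\omega(t)$ with a stationary process $\varpi(t)$ started from an invariant measure of the type in Remark~\ref{remark:invariantm}, chosen so that its density profile converges to $c$ on $(-\infty,0)$ and to $0$ on $(0,\infty)$; it then feeds this into the microscopic entropy inequality (Lemma~\ref{lem:microEntIneqn}) and, after a DiPerna-type cancellation of the left half-line, obtains the bound $\int_0^T\!\int_0^\infty\Phi(\lambda)\,\pi_{t,\delta}(d\lambda)\,dt\leq C\delta$. Your route via Step~1 and a bare mass balance is more direct, since it bypasses both the coupling with the invariant measure and the entropy-inequality machinery.

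Your Step~1 is correct and is the right microscopic input: $\E_{\mu_N}\big[\int_0^T g(\omega_0(s))\,ds\big]\leq C T N^{-\beta}\to 0$, hence $\int_0^T g(\omega_0(s))\,ds\to 0$ in probability.

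Step~2, however, contains a real gap. Your description involves the flux ``of type $(2p-1)N g(\omega_{-1})$'' and the destruction $\alpha N^{1+\beta}\int_0^t g(\omega_0)$, so you are writing the mass balance for $\{x\geq 0\}$, including the origin. After the $N^{-1}$ rescaling, the destruction contribution becomes $\alpha N^\beta\int_0^T g(\omega_0(s))\,ds$, which by Step~1 is bounded by a constant but does \emph{not} vanish; and the incoming flux $p\int_0^T g(\omega_{-1}(s))\,ds$ across the bond $(-1,0)$ has no reason to be small (it reflects $\rho_L(t,0^-)$, which is generically positive). So the balance for $\{x\geq 0\}$ gives no useful upper bound on the flux at $\varepsilon N$.

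The fix is to exclude the origin and write the balance for $\{1\leq x\leq \varepsilon N\}$. Then no destruction term appears, and the incoming flux at the left edge is
\[
p\,g(\omega_0)-(1-p)\,g(\omega_1)\;\leq\; p\,g(\omega_0),
\]
whose time integral vanishes by Step~1. Dropping the non-negative reverse flux $(1-p)g(\omega_1)$ is precisely the sign argument that is missing from your outline. With this correction the rest of Step~2 goes through: the mass in the strip is $O(\varepsilon)$ by stochastic domination by $\nu_{\rho^\star}$, hence the outgoing flux at $\varepsilon N$ satisfies $\int_0^T\big(p\,g(\omega_{\varepsilon N})-(1-p)\,g(\omega_{\varepsilon N+1})\big)\,ds\leq C\varepsilon+o_N(1)$ in probability; smoothing the indicator of $[0,\varepsilon]$ and applying the one-block estimate (Lemma~\ref{lemOneBlock}) then turns this into $(2p-1)\int_0^T\!\int_0^\infty\Phi(\lambda)\,\pi_{t,\varepsilon}(d\lambda)\,dt\leq C\varepsilon$ at the level of $\Q^*$, which is the claim.
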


\begin{proof}
To prove this result, we adapt the arguments laid out in \cite[Lemma 6.4]{landim1996hydrodynamical}. Fix $c \geq 0$.    We couple two processes $(\omega (t),\,\varpi (t))$ according to the generator $N \tilde{\mathcal{L}}_N$ as stated in Subsection \ref{sebsection:coupling}. Let $\tilde{\mu}_{N,1} = \mu_N$.  We take the second marginal $\tilde{\mu}_{N,2}$ of $\tilde{\mu}_N$ to be the invariant measure as stated in Remark \ref{remark:invariantm}, with the four parameters given by  
\[c_1 = - \frac{\alpha N^{\beta}}{2p-1+\alpha N^{\beta}} \Phi (c),\quad c_2 = \Phi (c),\quad c_3 = 0, \quad c_4 = \frac{2p-1}{2p-1 + \alpha N^{\beta}} \Phi (c).\] 
With the above choice,
\[
\lim_{N \rightarrow \infty} \E_{\tilde{\mu}_{N,2}} [\omega_{[Nu]}] = \begin{cases}
	c \quad&\text{if $u < 0$},\\
	0 \quad&\text{if $u  > 0$}.
\end{cases}
\]
Since the process $\varpi (t)$ is time invariant, by law of large numbers, for any $\varepsilon > 0$ and for any non-negative test function $H$ with compact support in $(0,\infty) \times \bb{R}$, we have

\begin{align*}
	&\lim_{\ell \rightarrow \infty}\, \limsup_{N \rightarrow \infty}\, \tilde{\mathbb{P}}_{\tilde{\mu}_N}\,  \bigg[ \int_0^\infty  N^{-1} \sum_{x \leq 0}   H (t,\tfrac x N) \big| \varpi_x^\ell (t) - c \big| 
dt \geq  \varepsilon \bigg]  = 0,\\
 &\lim_{\ell \rightarrow \infty}\, \limsup_{N \rightarrow \infty}\, \tilde{\mathbb{P}}_{\tilde{\mu}_N}\,  \bigg[ \int_0^\infty  N^{-1} \sum_{x \geq 0}   H (t,\tfrac x N) \varpi_x^\ell (t) 
dt \geq  \varepsilon \bigg] = 0.
\end{align*}
 By Lemma \ref{lem:microEntIneqn}, 
 
\begin{equation}\label{boundary2}
	\begin{aligned}
		&\int_0 ^\infty \, \int_0^\infty \, \Big\{ \partial_tH (t,u) \int_{0}^{\infty} \lambda  \pi_{t,u} (d\lambda) +  (2p-1) \partial_u H (t,u) \int_{0}^{\infty} \Phi(\lambda) \pi_{t,u} (d\lambda)  \Big\} \, du \,dt\\
		&+ \int_0^\infty \, \int_{- \infty}^0 \, \Big\{ \partial_t H (t,u)  |\rho_L (t,u) - c|  +  (2p-1) \partial_u H (t,u)  |\Phi(\rho_L (t,u)) - \Phi (c)|  \Big\} \, du \,dt \geq 0
	\end{aligned}
\end{equation}
with $\Q^*$-probability one. The rest of the proof of Lemma \ref{lem:betaleq1} is adapted from \cite[Theorem 4.1]{diperna1985measure}. In order not to burden with technical details, we now present a formal argument, and refer to the latter for additional technical details. From \eqref{boundary2}, we first obtain formally, for any $c$, $t$, $u$,

\begin{multline}
\label{eq:formalpi}
\<\partial_t \pi_{t,u} \mathbb{1}\{u \geq 0\}, \lambda\> + (2p-1)\<\partial_u \pi_{t,u} \mathbb{1}\{u \geq 0\}, \Phi (\lambda)\>  \\
+ \<\partial_t \pi_{t,u} \mathbb{1}\{u \leq 0\}, |\lambda - c|\> + (2p-1)\<\partial_u \pi_{t,u} \mathbb{1}\{u \leq 0\}, |\Phi (\lambda)-\Phi (c)|\>  \leq 0
\end{multline}
in the sense of distributions, where we define $\pi_{t,u} \mathbb{1}\{u \leq 0\} = \delta_{\rho_L (t,u)} \mathbb{1}\{u \leq 0\}$. Observe that
\begin{align*}
	0 &=  \partial_t \< \pi_{t,u} \mathbb{1}\{u \leq 0\}, |\lambda - \rho_L (t,u)|\> + (2p-1) \partial_u \< \pi_{t,u} \mathbb{1}\{u \leq 0\}, |\Phi (\lambda)-\Phi (\rho_L (t,u))|\> \\
	&=  \< \partial_t  \pi_{t,u} \mathbb{1}\{u \leq 0\}, |\lambda - \rho_L (t,u)|\> + (2p-1) \<\partial_u  \pi_{t,u} \mathbb{1}\{u \leq 0\}, |\Phi (\lambda)-\Phi (\rho_L (t,u))|\> \\
	&+  \< \pi_{t,u} \mathbb{1}\{u \leq 0\}, \partial_t |\lambda - \rho_L (t,u)|+ (2p-1) \partial_u   |\Phi (\lambda)-\Phi (\rho_L (t,u))| \> \\
	&\leq \< \partial_t  \pi_{t,u} \mathbb{1}\{u \leq 0\}, |\lambda - \rho_L (t,u)|\> + (2p-1) \<\partial_u  \pi_{t,u} \mathbb{1}\{u \leq 0\}, |\Phi (\lambda)-\Phi (\rho_L (t,u))|\>
\end{align*}
in the sense of distributions. In the last inequality, we use the fact that $\rho_L (t,u)$ satisfies the entropy inequality and thus the penultimate line is non-positive in the sense of distributions.  Therefore, choosing $c=\rho_L(t,u)$ in \eqref{eq:formalpi} yields
\[\<\partial_t \pi_{t,u} \mathbb{1}\{u \geq 0\}, \lambda\> + (2p-1)\<\partial_u \pi_{t,u} \mathbb{1}\{u \geq 0\}, \Phi (\lambda)\> \leq 0\]
in the sense of distributions.  More explicitly, this identity yields for any non-negative test function $H$ that
\begin{equation}\label{boundary1}
\begin{aligned}
	\int_0 ^\infty \, \int_0^\infty \, \bigg\{ \partial_t H (t,u) \int_{0}^{\infty} \lambda  \pi_{t,u} (d\lambda)
	 +  (2p-1) \partial_u H (t,u) \int_{0}^{\infty} \Phi(\lambda)  \pi_{t,u} (d\lambda)  \bigg\} \, du \,dt \geq 0
\end{aligned}
\end{equation}
with $\Q^*$-probability one.   \lj{We remark that  in the above formal argument we need to reformulate those inequalities involving distributions into inequalities in terms of integrals over test functions. Moreover, the constant $c$ does not depend on $t$ or $u$. This could be overcome by appropriately regularizing $\pi_{t,u}$ and $\rho_L (t,u)$.}

Fix $\delta > 0$. Using the same strategies presented in the proof of Lemma \ref{lem:YoungMeasure0},  we approximate $H (t,u) = \mathbb{1} \{ t \leq T\} \times \mathbb{1} \{ u \leq \delta\}$ by smooth functions with compact support, and  together with \eqref{boundary1}, we obtain that there exists $C > 0$ such that
\begin{equation}
	\int_0^{T} \, \int_{0}^{\infty} \,  \Phi(\lambda)  \pi_{t,\delta} (d \lambda)\,dt \leq C \delta
\end{equation}
with $\Q^*$-probability one. This completes the proof by letting $\delta \rightarrow 0$.
\end{proof}

\subsection{The case $\beta \geq 1$.}  

To indicate the dependence of the process on the parameter $\beta$, let $\omega^{(\beta)} (t)$ be the process with generator $N (\Lex + \Lsink)$. Without confusions and to make notations simple, in this subsection we denote  $\omega (t) := \omega^{(1/2)} (t)$ and $\eta(t) = \omega^{(\infty)} (t)$. In other words, in  $\eta$ any particle that reaches the origin is destroyed. We still denote by $\Prob_{\mu_N}$ the distribution of the coupled processes, and by $\E_{\mu_N}$ the corresponding expectation. We shall couple $\eta (t)$ and $\omega (t)$ together such that if $\omega (0) = \eta (0)$ at the initial time, then
\begin{equation}\label{vanishCoup}
\limsup_{N \rightarrow \infty} \frac{1}{N} \sum_{x \in \bb{Z}}  \bb{E}_{\mu_N} \Big[\big| \eta_x (t) - \omega_x (t) \big| \Big]= 0.
\end{equation}
This is enough to prove Theorem \ref{thmAttrac} for the case $\beta \geq 1$, since the case $\beta=1/2$ has already been proved. Indeed, since the process is attractive,  for any compactly supported and  continuous function $H:\R\to\R$ and for any $\beta \geq 1$,
\begin{align*}
\bb{E}_{\mu_N} \bigg[ \Big| \frac{1}{N} \sum_{x \in \bb{Z}}  H \big(\tfrac x N\big) (\omega^{(\beta)}_x (t) - \omega_x (t)) \Big| \bigg] &\leq \frac{||H||_\infty}{N} \sum_{x \in \bb{Z}}  \bb{E}_{\mu_N} \Big[\big| \omega^{(\beta)}_x (t) - \omega_x (t) \big| \Big]\\
 &\leq \frac{||H||_\infty}{N} \sum_{x \in \bb{Z}}  \bb{E}_{\mu_N} \Big[\big| \eta_x (t) - \omega_x (t) \big| \Big] \rightarrow 0 \quad \text{as $N \rightarrow \infty$.}
\end{align*}
We now describe the coupling.  We label the $\eta$-particles by $Y^{k,i} (t)$,  and the $\omega$-particles by  $Z^{k,i} (t)$, where $k \in \bb{Z},\, i = 1,2,\ldots,\eta_k (0)$. Let
\[
\eta_x (t) = \sum_{k \in \bb{Z}} \sum_{i=1}^{\eta_k (0)} \mathbb{1} \{Y^{k,i} (t) = x\}, \qquad
\omega_x (t) = \sum_{k \in \bb{Z}} \sum_{i=1}^{\eta_k (0)} \mathbb{1} \{Z^{k,i} (t) = x\}.
\]
If $Y^{k,i}$ and $Z^{k,i}$ are at the same site, then we say these two particles are coupled; otherwise they are uncoupled. We shall see that $\eta (t) \leq \omega (t)$ for all $t \geq 0$ and that $Y$-particles are always coupled with $Z$-particles. The dynamics is as follows:
\begin{itemize}
	\item at rate $N g (\eta_x (t))$, a pair of coupled particles is chosen uniformly from the coupled particles at site $x$, and jumps to site $x+1$ with probability  $p$, or to site $x-1$ with probability $1-p$;
	
	\item at rate $N (g (\omega_x (t)) -g (\eta_x (t)))$, an uncoupled $Z$-particle is chosen uniformly from the uncoupled $Z$-particles at site $x$, and jumps to site $x+1$ with probability  $p$, or to site $x-1$ with probability $1-p$;
	
	\item at the origin,  $Y$-particles die instantaneously and at rate $(N+ \alpha N^{3/2}) g (\omega_0 (t))$, a $Z$-particle is chosen uniformly from those particles at the origin, then it jumps to site $x+1$ with probability $p / (1+\alpha N^{1/2})$, or  to site  $x-1$ with probability $(1-p) / (1+\alpha N^{1/2})$, or dies with probability $\alpha N^{1/2} / (1+\alpha N^{1/2})$.
\end{itemize}
With the above coupling, we have
\[
\sum_{x \in \bb{Z}} \big| \eta_x (t) - \omega_x (t) \big|  = \sum_{k \in \bb{Z}} \sum_{i=1}^{\eta_k (0)} \mathbb{1}  \{ \text{$Z^{k,i}$ visits the origin before time $t$ and survives by time $t$} \}.
\]
Using the same argument as in the proof of Lemma \ref{lem:secondclass}, we can restrict the above summation to a large box $\{k: k \in [-AN,AN]\}$ for some $A > 0$. Depending on whether $Z^{k,i}$ is at the origin or not at time $t$, we can bound the expectation of the last expression by
\[
\bb{E}_{\mu_N} [\omega_0 (t)] + \sum_{k \in [-An,An]} \bb{E}_{\mu_N} [\eta_k (0)] \frac{1}{1+\alpha N^{1/2}},
\]
which divided by $N$ converges to zero as $N \rightarrow \infty$. This proves $\eqref{vanishCoup}$.

%
%
%
%
%
%\appendix

\section{The linear case}
\label{app}

In this section, we provide a simpler, alternative proof of the hydrodynamic limit (Theorem \ref{thmAttrac}) in the linear case where the jump rate is given by $g(k)=k$. Note that in this case, (see Remark \ref{rem:lincase}), the hydrodynamic limit follows a linear advection equation, and thus does not develop shock. Proving existence and uniqueness of weak solutions is then straightforward.
The alternative proof of Theorem \ref{thmAttrac} we propose in the linear case follows the \emph{duality approach} already used in \cite{Erignoux18, ELX18} for symmetric exclusion processes. However, it is significantly easier for the linear asymmetric zero-range process, because the estimation of the two-points correlations of the system is straightforward.

\medskip

For any sites $x,y\in\Z$, any $t\geq 0$,  we define the following discrete \emph{density field}
\begin{equation}
\label{eq:Defrhon}
\rho^N_x(t):=\E_{\mu_N}\big[\omega_x(t) \big], 
\end{equation}
as well as the \emph{two-points correlation field}
\begin{equation}
\label{eq:Defphi}
\varphi^N_{x,y}(t):=\E_{\mu_N}\big[(\omega_x(t)-\rho^N_x(t))(\omega_y(t)-\rho^N_y(t))\big].
\end{equation}
As detailed in \cite{Erignoux18}, the hydrodynamic limit in the linear case is a straightforward consequence of Lemmas \ref{lem:DensityField} and \ref{lem:CorrelationField} below.  The first lemma states that the discrete density field, once properly rescaled, converges to the hydrodynamic limit.

\begin{lemma}
\label{lem:DensityField}
The density field $\rho^N$ defined in \eqref{eq:Defrhon} converges as $N\to\infty$ towards the function $\rho$  defined in \eqref{eq:Defrho}, in the sense that: for any $t\geqslant 0$, any $u\in \R$,
	\begin{equation}\label{es2}
	\lim_{N \rightarrow \infty} \rho^N_{\lfloor N u\rfloor}(t) = \rho(t,u), 
	\end{equation}
where the convergence is  uniform in $u$ over compact subsets of $\R\setminus\{0,(2p-1)t\}$. In the identity above, the function $\rho$ is the linear hydrodynamic limit defined by \eqref{eq:Defrho}. In particular, for any  $H \in  C_K (\R)$, 

	\begin{equation*}
	\lim_{N \rightarrow \infty} \frac{1}{N} \sum_{x \in \mathbb{Z}} H \left(\tfrac{x}{N}\right) \rho^N_x(t) = \int_\R H(u) \rho (t,u) \;du.
	\end{equation*}
\end{lemma}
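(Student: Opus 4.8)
The plan is to first derive a closed linear evolution equation for the discrete density field $\rho^N_x(t)$, exploiting the fact that in the linear case $g(k)=k$ the expectation of the generator closes on the one-point function. Applying Dynkin's formula to $f(\omega)=\omega_x$ and taking expectations, the martingale term disappears and one obtains $\frac{d}{dt}\rho^N_x(t) = N (\Lex + \Lsink)$ acting on $\omega_x$ in expectation. Since $\Lex \omega_x = p\,\omega_{x-1} + (1-p)\,\omega_{x+1} - \omega_x$ and $\Lsink \omega_x = -\alpha N^\beta \omega_0 \mathbb{1}\{x=0\}$, this yields the linear system
\begin{equation}\label{eq:discreteDensityEq}
\frac{d}{dt}\rho^N_x(t) = N\big(p\,\rho^N_{x-1}(t) + (1-p)\,\rho^N_{x+1}(t) - \rho^N_x(t)\big) - \alpha N^{1+\beta}\,\rho^N_0(t)\,\mathbb{1}\{x=0\},
\end{equation}
with initial datum $\rho^N_x(0) = \rho_0(x/N)$. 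This is an explicitly solvable system: the homogeneous part is a rescaled biased random walk semigroup, and the sink term at the origin can be handled by Duhamel's formula.

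Next I would analyze \eqref{eq:discreteDensityEq} via its probabilistic representation. Let $(S^N_t)_{t\geq 0}$ be the continuous-time biased random walk on $\Z$ with generator $N(p\,\nabla^+ + (1-p)\,\nabla^-)$; it satisfies a law of large numbers $S^N_t/N \to (2p-1)t$ and, after centering, a diffusive CLT, so that $\mathbb{E}[H(S^N_t/N)] \to H((2p-1)t + u)$ uniformly on compacts when started from $\lfloor Nu\rfloor$. By Duhamel, $\rho^N_x(t) = \mathbb{E}_x[\rho_0(S^N_t/N)] - \alpha N^{1+\beta}\int_0^t \rho^N_0(s)\, \mathbb{P}_x(S^N_{t-s}=0)\, ds$. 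The heart of the matter is then to control the term $\rho^N_0(s)$ and the local time factor $\int_0^t N^{1+\beta}\mathbb{P}_x(S^N_{t-s}=0)\,ds$: for a biased walk the local time at the origin is $O(1)$ (the walk is transient), so $\mathbb{P}_x(S^N_{t-s}=0) \approx N^{-1}$ by the local CLT, and the effective weight of the sink term scales like $N^\beta$. When $\beta<0$ this vanishes and $\rho^N_{\lfloor Nu\rfloor}(t) \to \rho_0(u-(2p-1)t)$; when $\beta=0$ it contributes a finite correction producing the factor $(1-\widetilde\alpha)$; when $\beta>0$ one must argue, self-consistently, that $\rho^N_0(s)$ itself vanishes fast enough (as $N^{-\beta}$ up to logarithms) for the product to stay bounded and in fact to enforce $\widetilde\alpha=1$, i.e. total reflection/absorption of the characteristic at the origin. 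The cleanest route for $\beta\geq 0$ is to first solve the system on $\{x\leq 0\}$ (where the sink does not directly enter, and the incoming flux $\rho^N_0(s)$ is slaved to $\rho_0$ on the left via the same random-walk representation) to pin down $\rho^N_0(s) \to \Phi(\rho_L(s,0))$-type boundary data, then feed this into the Duhamel term and evaluate the resulting convolution. For $g(k)=k$ one has $\Phi(\rho)=\rho$ and $R=\mathrm{id}$, which is why \eqref{eq:f} degenerates to the simple prefactor in \eqref{eq:Defrho}.

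The main obstacle I expect is the uniform control of $\rho^N_0(s)$ for $s$ near $0$, together with the interchange of the $N\to\infty$ limit and the time integral in Duhamel's formula near the singular set $\{0,(2p-1)t\}$: the characteristic emanating from the origin carries the discontinuity, and one must show the convergence is genuinely uniform on compact subsets of $\R\setminus\{0,(2p-1)t\}$ while allowing a boundary layer of width $O(N^{-1/2})$ around the shock line. This is handled by the local central limit theorem with explicit Gaussian tail bounds for the biased walk, giving exponential decay away from the mean displacement, so the contribution of the boundary layer to any test-function integral is $O(N^{-1/2})$ and vanishes. Once the pointwise uniform convergence \eqref{es2} is established together with the uniform bound $0 \leq \rho^N_x(t) \leq \|\rho_0\|_\infty$ (itself immediate from the maximum principle for \eqref{eq:discreteDensityEq}, or from stochastic domination), the final displayed assertion follows by dominated convergence, splitting the sum over $x$ into the compact support of $H$ and using a Riemann-sum approximation away from the two exceptional points, whose contribution is negligible since $H$ is bounded and the exceptional set has measure zero.
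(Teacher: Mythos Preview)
Your Duhamel approach is workable in principle but takes a more circuitous route than the paper and has one genuine gap. The paper absorbs the sink directly into the dual walk: it defines a random walk $X_t$ with generator $\mathscr{L}_N^\dagger$ (jumping left at rate $Np$, right at rate $N(1-p)$, and killed at the origin at rate $\alpha N^{1+\beta}$), so that Feynman--Kac gives the \emph{explicit} representation $\rho^N_x(t) = \mathbb{E}_x[\rho_0(X_t/N)\,\mathbb{1}\{\tau > t\}]$, with $\tau$ the killing time. The problem then reduces to computing the survival probability, and a gambler's-ruin computation yields the closed form $\mathbb{P}_0(\tau < \infty) = \alpha N^\beta/(\alpha N^\beta + 2p - 1) \to \widetilde\alpha$, handling all three regimes of $\beta$ simultaneously with no self-consistency bootstrap.

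By contrast, your Duhamel representation is implicit: $\rho^N_0(s)$ appears on the right-hand side, and you must determine it before evaluating the correction at any other $x$. Your proposed fix---``first solve the system on $\{x\leq 0\}$ where the sink does not directly enter''---does not work as stated, because the sink is \emph{at} $x=0$; solving on $\{x<0\}$ alone leaves $\rho^N_0$ undetermined. What you actually need is to write the Duhamel identity at $x=0$, obtaining a Volterra renewal equation for $s\mapsto\rho^N_0(s)$ with kernel $\alpha N^{1+\beta}\,\mathbb{P}_0(S^N_{t-s}=0)$, and solve or bound it (e.g.\ via Laplace transform). Doing so recovers precisely the killed-walk survival probability, so you arrive at the paper's formula after extra work. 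Two smaller points: the dual walk drifts \emph{left} (your claim $S^N_t/N \to (2p-1)t$ has the wrong sign, though your target limit $\rho_0(u-(2p-1)t)$ is consistent with leftward drift); and the pointwise estimate $\mathbb{P}_x(S^N_{t-s}=0)\approx N^{-1}$ is not right---by the local CLT this is either exponentially small or of order $N^{-1/2}$, and it is the time integral $\int_0^t \mathbb{P}_x(S^N_{s}=0)\,ds$ (the Green's function of the transient walk) that is $O(N^{-1})$.
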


The second lemma states that the two-points correlation field vanishes away from the diagonal.
\begin{lemma}\label{lem:CorrelationField}
For any $\varepsilon>0$, and any $t\geqslant 0$, 
	\begin{equation}\label{escor2}
	\lim_{N \rightarrow \infty}\sup_{\substack{x,y\in \Z\\ |x-y|\geq \varepsilon N}}| \varphi^N_{x,y}(t)|=0.
	\end{equation}
\end{lemma}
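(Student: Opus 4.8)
The plan is to derive a closed evolution equation for the two-points correlation field $\varphi^N_{x,y}(t)$ and to control it by comparison with a random walk argument, exploiting the fact that in the linear case $g(k)=k$ the generator acts on the moments of $\omega$ in a closed form. First I would write Dynkin's formula for $\omega_x(t)$ and $\omega_x(t)\omega_y(t)$: since $g(k)=k$, one has $N\mathcal{L}_N\omega_x = N\bigl(p\omega_{x-1}+(1-p)\omega_{x+1}-\omega_x\bigr) - \alpha N^{1+\beta}\omega_0\mathbb{1}\{x=0\}$, so $\rho^N_x(t)$ solves the discrete linear equation driving Lemma \ref{lem:DensityField}. Applying $N\mathcal{L}_N$ to the product $\omega_x\omega_y$ for $x\neq y$ produces, after subtracting the contribution of $\rho^N_x\rho^N_y$, a discrete heat-type equation for $\varphi^N_{x,y}(t)$ of the form $\partial_t\varphi^N_{x,y} = N(\mathbb{A}_x+\mathbb{A}_y)\varphi^N_{x,y} + (\text{boundary terms at }0) + (\text{diagonal source terms})$, where $\mathbb{A}$ denotes the discrete asymmetric Laplacian $p\nabla_{-} + (1-p)\nabla_{+} - \mathrm{Id}$. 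The crucial structural point, specific to the zero-range dynamics with linear rate, is that the only genuinely new term (compared to independent walks) is supported on the diagonal $x=y$ and on the site $0$; off-diagonal, the correlation field simply diffuses/advects according to two independent copies of the accelerated walk.

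The key steps, in order: (1) establish the discrete evolution equation for $\varphi^N_{x,y}(t)$ for $x\neq y$, carefully isolating the source terms, which come from (a) the diagonal $\{x=y\}$ — where jumps move one particle and thus affect $\omega_x^2$ — and (b) the destruction at the origin, which contributes a term proportional to $\alpha N^{1+\beta}$ times correlations involving $\omega_0$; (2) bound these source terms uniformly: by attractiveness and the stochastic domination $\mu_N \preccurlyeq \nu_{\rho^\star}$ (here $\nu_{\rho^\star}$ is a product of Poissons, so all moments are uniformly bounded), one gets $|\varphi^N_{x,y}(t)| \leq C$ and $|\rho^N_x(t)|\leq C$ uniformly in $x,y,N,t\leq T$; (3) represent the solution via Duhamel's formula against the semigroup of two independent accelerated random walks $(X_t,Y_t)$ started at $(x,y)$, so that $\varphi^N_{x,y}(t)$ is expressed as an integral over $s\in[0,t]$ of $\mathbb{E}[(\text{source at }(X_s,Y_s))]$ plus the contribution of the initial data; (4) observe that the initial data $\varphi^N_{x,y}(0)=0$ for $x\neq y$ since $\mu_N$ is a product measure — this kills one term entirely; (5) for the diagonal source, bound its contribution by $C\int_0^t \mathbb{P}(X_s=Y_s)\,ds$; a local central limit theorem for the accelerated walk gives $\mathbb{P}(X_s=Y_s)=\mathbb{P}(X_s-Y_s=0)\leq C/\sqrt{Ns}$ (the difference walk runs at speed $\sim N$), which is integrable in $s$ but, more importantly, when $|x-y|\geq \varepsilon N$ the walk $X_s-Y_s$ needs time of order $\varepsilon N / $ (drift, or $(\varepsilon N)^2/N = \varepsilon^2 N$ by diffusive scaling) to reach $0$, giving a large-deviations bound $\mathbb{P}(X_s=Y_s)\leq e^{-c\varepsilon^2 N}$ for $s\leq T$, which vanishes uniformly; (6) similarly, the origin source term is supported on $\{X_s=0\}\cup\{Y_s=0\}$, and since $|x-y|\geq\varepsilon N$ at least one of $x,y$ is at distance $\geq \varepsilon N/2$ from $0$, the same large-deviations estimate makes $\mathbb{P}(X_s=0)$ or $\mathbb{P}(Y_s=0)$ exponentially small — this handily absorbs the dangerous $\alpha N^{1+\beta}$ prefactor as long as the exponential decay $e^{-c\varepsilon^2 N}$ beats the polynomial $N^{1+\beta}$, which it does for every fixed $\beta\in\R$.

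The main obstacle I anticipate is bookkeeping in step (1)–(2): correctly deriving the discrete equation for $\varphi^N_{x,y}$, including the subtraction of the deterministic product $\rho^N_x\rho^N_y$ and keeping track of which cross-terms survive. One must be careful that the "source" at the origin is of the form $\alpha N^{1+\beta}$ times a quantity like $\mathbb{E}[\omega_0(\omega_y-\rho^N_y)]$, which is itself a correlation $\varphi^N_{0,y}$ (plus a lower-order piece): this means the evolution equation is not purely a forced linear equation but a genuinely coupled system, and one closes it by Gr\"onwall-type reasoning combined with the hitting-time estimates — the exponential smallness of reaching $0$ from distance $\geq \varepsilon N/2$ dominates the $N^{1+\beta}$ growth. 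A clean way to organize this is to define $\Psi^N(t):=\sup_{|x-y|\geq \varepsilon N}|\varphi^N_{x,y}(t)|$ and also an auxiliary $\sup_{|y|\geq \varepsilon N/2}|\varphi^N_{0,y}(t)|$, write a Duhamel/Gr\"onwall inequality coupling the two, and conclude that both tend to $0$; the technical heart is the quantitative random-walk hitting estimate, which is standard but must be stated with the correct $N$-dependence. Everything else (attractiveness giving uniform moment bounds, product initial data killing the initial correlation) is routine given the results already established in the excerpt.
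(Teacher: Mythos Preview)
Your overall strategy---close the two-point correlation equation, represent it via a dual pair of random walks, and exploit that the initial correlations vanish and the difference walk cannot reach the diagonal from distance $\varepsilon N$ in bounded time---is precisely the one the paper uses. Steps (1)--(5) are essentially correct, and step (5) is the heart of the argument.

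There is, however, a genuine gap in step (6). Your claim that $\mathbb{P}(X_s=0)$ is exponentially small when $|x|\geq \varepsilon N/2$ is false: the dual walk has drift $-(2p-1)N$ (to the left), so if $x>0$ it typically \emph{crosses} the origin at time $s_*\approx x/((2p-1)N)$, and a local CLT gives $\int_0^t \mathbb{P}(X_s=0)\,ds = O(1/N)$ rather than $e^{-cN}$. Multiplied by the prefactor $\alpha N^{1+\beta}$ this yields $O(N^\beta)$, which does not vanish for $\beta\geq 0$; your proposed Gr\"onwall closure then produces a coefficient of order $N^\beta$ and blows up. The ``exponential beats polynomial'' reasoning only applies to the diagonal term (5), where the \emph{difference} $X_s-Y_s$ is a genuinely driftless walk.

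The fix is to notice the sign you yourself wrote down but did not exploit: the origin contribution to $\partial_t\varphi^N_{x,y}$ is $-\alpha N^{1+\beta}\big(\mathbb{1}\{x=0\}+\mathbb{1}\{y=0\}\big)\varphi^N_{x,y}$, i.e.\ a \emph{sink}, not a source. It is therefore absorbed into the dual generator as killing at the origin, with the convention $\varphi^N_{\mathfrak{d},y}=\varphi^N_{x,\mathfrak{d}}=0$ at the cemetery state. This is exactly what the paper does: it lets both coordinates of the dual pair evolve under $\mathscr{L}_N^\dagger$ (which already contains the killing), so that the Feynman--Kac representation has \emph{only} the near-diagonal as a nontrivial boundary. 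One then stops the walk at $\tau'=\inf\{s:|X_s-Y_s|\leq 1\}$, uses the crude bound $|\varphi^N|\leq C(\rho^\star)$ there, and obtains $|\varphi^N_{x,y}(t)|\leq C(\rho^\star)\,\mathbb{P}_{x,y}(\tau'<t)$, which vanishes uniformly for $|x-y|\geq\varepsilon N$ by your own step (5). No Gr\"onwall loop is needed, and the dangerous $N^{1+\beta}$ never appears.
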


We will not detail the proof that the hydrodynamic limit follows from these two estimates, we simply sketch the main steps and refer the interested reader to \cite[Section 5]{Erignoux18} for a detailed implementation. The main argument comes from the straightforward bound

\begin{multline*}
\E\bigg[\bigg(\frac{1}{N} \sum_{x \in \mathbb{Z}} H \left(\tfrac{x}{N}\right) \omega_x(t) - \int_\R H(u) \rho (t,u) \bigg)^2\bigg]\\
\leq \frac{2}{N^2}\sum_{x,y\in \Z} H \left(\tfrac{x}{N}\right) H \left(\tfrac{y}{N}\right) \varphi^N_{x,y}(t)+ 2 \bigg(\frac{1}{N} \sum_{x \in \mathbb{Z}} H \left(\tfrac{x}{N}\right) \rho^N_x(t) -\int_\R H(u) \rho (t,u) \;du\bigg)^2,
\end{multline*}
which is obtained by adding and subtracting $N^{-1} \sum_x H (x/N) \rho^N_x(t)$ inside the parenthesis. The second term in the right hand side vanishes according to Lemma \ref{lem:DensityField}. The first term can be split, for any $\varepsilon>0$ into a term that vanishes as $N\to\infty$ according to Lemma \ref{lem:CorrelationField}, and a second sum which contains $\mathcal{O}((\varepsilon N)^2)$ terms. Because the process is stochastically dominated by some equilibrium distribution $\nu_{\rho^\star}$, the $\varphi^N_{x,y}(t)$ are bounded, uniformly in $x,y, t$, by some constant $C(\rho^\star)$. In particular, each of the terms in the sum are of order $C(H, \rho^\star)/N^2$, and this last contribution vanishes as $N\to\infty$ and $\varepsilon \to 0$, which proves the hydrodynamic limit.

%
%
%\begin{remark}
%	By using the duality method presented below, one could show that for any $H \in C_c (\R)$, the variance of $N^{-1} \sum_{x \in \mathbb{Z}} H \left(\tfrac{x}{N}\right) \omega_{x} (t)$ converges to zero as $N \rightarrow \infty$. This is enough to prove Theorem \ref{thmAttrac} in the linear case.
%\end{remark}

\begin{proof}[Proof of Lemma \ref{lem:DensityField}]
By Dynkin's formula, the values $\rho^N_x(t):=\rho^N(t,\frac x N)$ are the solutions of the following system: 
\begin{equation}\label{eq:b1}
\begin{cases}
\frac{d}{dt} \rho^N_x(t)  = N\Big((1-p) \rho^N_{x+1}(t) + p \rho^N_{x-1}(t)-\rho^N_x(t)\Big), & x \neq 0\\ 
\frac{d}{dt} \rho^N_0(t)  = N\Big((1-p) \rho^N_1(t) +p \rho^N_{-1}(t) -(1+\alpha N^{\beta}) \rho^N_0(t) \Big).
\end{cases} 
\end{equation}
The main ingredient to prove  Lemma \ref{lem:DensityField} is a duality relationship that expresses  the density field $\rho^N(t)$ as a function of an asymmetric random walk $X$ on $\Z$.
Define a cemetery state $\dd$, and consider a continuous-time random walk $X_t$ on $\Z$, killed at rate $ \alpha N^{1+\beta}$ at the origin, at which point it goes to the cemetery state $\dd$. The random walk is asymmetric, and jump at rate $Np$ to the left and $N(1-p)$ to the right, \textit{i.e.}, its drift is inverted w.r.t.~the particles in our zero-range process. Denote by $\mathcal{L}_N^\dagger$ the generator of this process, acting on local functions $f:\Z\cup\{\dd\}\to\R$ as
\begin{equation}
\label{eq:Ldag}
(\mathscr{L}_N^\dagger f)_x=Np\br{f_{x-1}-f_x}+N(1-p)\br{f_{x+1}-f_x}+\alpha N^{1+\beta}\br{f_{\dd} -f_x}{\mathbb{1}}\{x=0\}.
\end{equation}
We denote by $\tau\in[0,+\infty]$ the time at which the random walk is killed, namely $\tau=\inf\{ t>0, \; X_t=\dd\}$, by  $\Prob_x$ the distribution of $X_t$ started from $x$, and by ${\mathbb E}_x$ the corresponding expectation. For convenience, we let the random walk run after it is killed by letting it jump at infinite rate from $\mathfrak{d}$ to $0$, but keep track that it died.

%\medskip

With these notations,  \eqref{eq:b1} yields 
\begin{equation}
\label{eq:FCdens}
\begin{cases}
\frac{d}{dt} \rho^N_x(t) =\mathscr{L}_N^\dagger \rho^N_x(t)& \quad  x\in \Z,\; t>0,\\ 
\rho^N_{\dd}(t)=0& \quad t>0,\\
\rho^N_x(0)=\rho_0(\frac xN ) & \quad  x\in \Z,
\end{cases}
\end{equation}
so that by Feynman-Kac's formula, we have the dual identity 
\begin{equation}
\label{duality_mean}
\rho^N_x(t)=\mathbb{E}_x\cro{\rho_0\Big(\frac{X_t}N\Big)\;{\bb 1}\{\tau>t\}}.
\end{equation}
We now consider the behavior of the random walk $X_t$. Define $p_N,\; q_N,\; r_N$ as
\[p_N=\frac{\alpha N^\beta}{1+\alpha N^\beta},  \quad q_N=  \frac{2p-1}{1+\alpha N^\beta} \quad \mbox{ and } \quad r_N=\frac{2-2p}{1 + \alpha N^{\beta}}=1-p_N-q_N,\]
which are the probabilities that: the random walk $X$, initially at the origin, gets killed before jumping out ($p_N$), the random walk is not killed and escapes to infinity without ever coming back to the origin ($q_N$), and the random walk is not killed immediately  but eventually comes back to the origin ($r_N$).

From this, we deduce that the probability that, starting at the origin, the probability that $X_t$ gets killed before escaping to infinity is 
\begin{equation}\label{killprob}
\widetilde{\alpha}_N:=\mathbb{P}_0 (\tau < \infty) =  p_N \sum_{k \geq 0} r_N^k=\frac{\alpha N^\beta}{\alpha N^\beta+2p-1}\underset{N \to\infty}{\longrightarrow}\widetilde{\alpha}.
\end{equation}
where $\widetilde{\alpha}$ was defined in \eqref{eq:Defalphat}. 

\medskip

Fix now $u\in \R$, and assume that  $X_0=\lfloor Nu \rfloor$. Recall that we let the random walk $X$ jump asymmetrically after it dies, we further claim that 
	\begin{equation}
	\label{estimateXt0}
	\Prob_{\lfloor Nu \rfloor}\pa{\sup_{s\leq t}\Big|\frac{X_s}{N}- u+(2p-1)s\Big|\geq 2 N^{-1/4}}=\mathcal{O}(N^{-1/8}).
	\end{equation}
We define the martingale $M_s:=X_s- \lfloor Nu \rfloor+N(2p-1)s$, which is a martingale w.r.t.~the natural filtration of $X_\cdot$.  Fix $ t>0$, by Doob's inequality
	\begin{equation}
	\label{estimate1}
	\Prob_{\lfloor Nu \rfloor}\pa{\sup_{s\leq t}|M_s|\geq N^{3/4}}\leq {\mathbb E}_{\lfloor Nu \rfloor}\cro{|M_t|}N^{-3/4}
	\leq N^{-3/4}\left[1+  \int_{1}^\infty \Prob_{\lfloor N u\rfloor}(|M_t|\geq v)dv\right] .
	\end{equation}
	Fix $K>0$, we define for $k\leq N$ 
	\[Y_k=\big(M_{t(k+1)/N} - M_{tk/N}\big)\;{\bb 1}\big\{|M_{t(k+1)/N} - M_{tk/N}|\leq K\big\}.\] 
	By Azuma's inequality, 
	\[\Prob_{\lfloor N u\rfloor}\bigg(\Big|\sum_{k=0}^{N-1} Y_k\Big|\geq v\bigg)\leq 2 e^{-v^2/2NK^2}.\]
	In particular, 
	\[\Prob_{\lfloor N u\rfloor}(|M_t|\geq v)\leq 2 e^{-v^2/2NK^2}+N\max_{k\in \{0,\dots, N-1\}}\Prob_{\lfloor N u\rfloor}\big(|M_{t(k+1)/N} - M_{tk/N}|\geq K\big).\]
	Assume that $K>2t$, the probability in the right hand side is less than the probability that the random walk $X$ jumps  at least $K/2$ times in the time interval $[t(k+1)/N, tk/N]$, which is less than $C e^{-K/t}$  since the number of jumps in this interval is distributed as a Poisson random variable of parameter $t$.
	This finally yields
	\[\Prob_{\lfloor N u\rfloor}(|M_t|\geq v)\leq 2 e^{-v^2/2NK^2}+N C e^{-K/t},\] 
	so that, choosing $K=(\log N\log v)^2/t$,  \eqref{estimate1} yields
	\begin{equation}
	\label{estimateXt}
	\Prob_{\lfloor Nu \rfloor}\pa{\sup_{s\leq t}|M_s|\geq N^{3/4}}=\mathcal{O}(N^{-1/8}),
	\end{equation}
which proves \eqref{estimateXt0}.

\medskip

We now get back to \eqref{duality_mean}. For any $u\notin [0,(2p-1)t]$, for any $N$ large enough, according to \eqref{estimateXt0}, $\Prob_{\lfloor Nu \rfloor}(\tau<t)\leq \Prob_{\lfloor Nu \rfloor} (\exists\; s\leq t, X_s=0)=\mathcal{O}(N^{-1/8})$. In particular, recall from \eqref{eq:Defrho} the definition of $\rho$, and recall that we assumed $\rho_0$ to be uniformly $c_0$-Lipschitz, \eqref{duality_mean} yields 

\begin{equation*}
\big|\rho^N_{\lfloor Nu \rfloor}(t)-\rho(t,u)\big|\leq 2N^{-1/4}c_0+\mathcal{O}(N^{-1/8})\underset{N\to\infty}{\longrightarrow}0.
\end{equation*}
We now consider $u\in(0,(2p-1)t)$, for which we can write, using the same bounds, 

\begin{equation*}
\big|\rho^N_{\lfloor Nu \rfloor}(t)-\rho(t,u)\Prob_{\lfloor N u\rfloor}(\tau\geq t)\big|\leq 2N^{-1/4}c_0+\mathcal{O}(N^{-1/8})\underset{N\to\infty}{\longrightarrow}0.
\end{equation*}
To conclude the proof, we only need to show that $\Prob_{\lfloor N u\rfloor}(\tau\geq t)$ converges to $1- \widetilde{\alpha}$  as $N\to\infty.$ To do so, we show that $\Prob_{\lfloor N u\rfloor}(\tau\geq t)=\Prob_{0}(\tau=\infty)+o_N(1)$. We start by writing 
\[\Prob_{\lfloor N u\rfloor}(\tau\geq t)=\Prob_{\lfloor N u\rfloor}(\tau=\infty)+\Prob_{\lfloor N u\rfloor}(t\leq \tau<\infty).\]
By Markov property, the first term in the right hand side is $\Prob_{0}(\tau=\infty)$ because an asymmetric random walk $X$ started from $x>0$ hits the origin a.s.~in finite time. Regarding the second term, we can write using \eqref{estimateXt0} that for some \emph{negative} $u'$,
\[\Prob_{\lfloor N u\rfloor}(t\leq \tau<\infty)\leq \Prob_{\lfloor nu'\rfloor}(\tau<\infty)+\mathcal{O}(N^{-1/8}).\]
Indeed, at time $t$ the random walk has not died, and is w.h.p.~of order $1-\mathcal{O}(N^{-1/8})$ to the right of $\lfloor nu'\rfloor$, where $u'=(u-(2p-1)t)/2<0$. However, the random walk being asymmetric, the probability $\Prob_{\lfloor Nu'\rfloor}(\tau<\infty)$ is $\mathcal{O}(e^{-cN})$ since it visits the origin with probability $\mathcal{O}(e^{-cN})$ for some positive constant $c=c(u',p)$. Finally \eqref{killprob} yields as wanted
\[\Prob_{\lfloor N u\rfloor}(\tau\geq t)=\Prob_{0}(\tau=\infty)+\mathcal{O}(N^{-1/8})=1-\widetilde{\alpha}+\mathcal{O}(N^{-\varepsilon}),\]
where $\varepsilon=|\beta|\wedge 1/8$.

\medskip

All estimates above are uniform away from the boundary points $u=0$, $u=(2p-1)t$, so that the second statement of the lemma is now clear. The last statement of the lemma is straightforward, we do not detail it. 
\end{proof}

\begin{proof}[Proof of Lemma \ref{lem:CorrelationField}]
In the asymmetric case, the proof of Lemma \ref{lem:CorrelationField} is straightforward, we simply sketch it. In the same way that we obtained \eqref{eq:FCdens}, one can write using Dynkin's formula
\begin{equation}
\label{eq:FKcor}
\begin{cases}
\frac{d}{dt} \varphi^N_{x,y}(t) =\mathscr{L}_N^{\dagger,2} \varphi^N_{x,y}(t)& \quad  x,\;y\in \Z, |x-y|>1,\; t>0,\; \\ 
\varphi^N_{\dd,y}(t)=\varphi^N_{x,\dd}(t)=0& \quad  t>0, \;x, \;y\in \Z,\\
\varphi^N_{x,y}(0)=0 & \quad  x,\;y\in \Z,
\end{cases}
\end{equation}
where $\mathscr{L}_N^{\dagger,2}$ is the generator of a two-dimensional asymmetric random walk ${\bf X}=(X,Y)$ on $\Z^2$, where both $X$ and $Y$ are driven by the generator $\mathscr{L}_N^{\dagger}$ defined in \eqref{eq:Ldag}. In other words, under $\mathscr{L}_N^{\dagger,2}$, ${\bf X}$ jumps from $(x,y)$ to either $(x+1,y)$ or $(x, y+1)$ at rate $1-p$, to  
$(x-1,y)$ or $(x, y-1)$ at rate $p$, and jumps from $(x,0)$ or $(0,y)$ to the cemetery state $(x,\dd)$, $(\dd,y)$ at rate $\alpha N^{1+\beta}$. Note that not to burden the proof, we say nothing about the behavior of the random walk ${\bf X}=(X,Y)$ close to the diagonal, \textit{i.e.}~when $|X-Y|\leq 1$. When the random walk gets close to the diagonal, we will simply use the crude bound yielded by attractiveness, namely
\[\sup_{\substack{x,y\in \Z\\ t\geq 0}}|\varphi^N_{x,y}(t)|\leq C(\rho^\star).\]
Once again, by Feynman-Kac's formula, one can write for any $|x-y|>1$, $t\geq 0$,
\begin{equation}
\label{eq:coreq}
\varphi^N_{x,y}(t)=\mathbb{E}_{x,y}\cro{\varphi_{{\bf X}_{\tau'}}(t-\tau')\;{\bb 1}_{\{\tau'<t\wedge \tau\}}}\leq C(\rho^\star)\bb P_{x,y}(\tau'<t),
\end{equation}
where $\bb P_{x,y}$, $\mathbb{E}_{x,y}$ denotes the distribution of the random walk ${\bf X}_t=(X_t,Y_t)$ started from ${\bf X}_0=(x,y)$, and the corresponding expectation. In the identity above, $\tau'$ is the first time the random walk ${\bf X}_t$ hits the set $|x-y|\leq 1$, whereas $\tau=\tau(X)\wedge \tau(Y)$ is the first time either $X$ or $Y$ hits the cemetery state. Equation \eqref{eq:coreq} comes from the fact that we have three boundaries to the system \eqref{eq:FKcor} : if either $X$ or $Y $ hits  the cemetery state, $\varphi^N$ vanishes. If the random walk reaches time $t$, then we query the initial time correlations, which also vanish. The only non-zero contribution therefore comes from getting close to the diagonal. However, given $\varepsilon >0$,  uniformly in $|x-y|\geq \varepsilon N$, $\bb P_{x,y}(\tau'<t)$ vanishes as $N\to\infty$, because $X_t$ and $Y_t$  will be respectively close (up to a fluctuation of order $\mathcal{O}(\sqrt{N})$) to $x+(2p-1)Nt$ and $y+(2p-1)Nt$ and $x$ and $y$ are far apart. This proves \eqref{escor2}.
\end{proof}

\appendix

\section{Equivalence of Definitions \ref{def:ES2} and \ref{def:sol}}
\label{app:equiv}

In Definition \ref{def:ES2} the boundary condition is given by the density $\varrho (t)$ at the origin,  while in Definition \ref{def:sol} the boundary term is given by the flux $f(t)$ into the system. In this section, we show that  the two definitions coincide if $f(t) = (2p-1) \Phi (\varrho (t))$. 
	
\medskip	
	
Indeed, assuming $\rho$ is an entropy solution to \eqref{hE0}, we shall prove it is also an entropy solution to \eqref{hEpos} with $f(t) = (2p-1) \Phi (\varrho (t))$. Then by uniqueness of the weak entropy solutions, these two definitions are the same.  Taking $H \in C_K^{1,1} ((0,\infty) \times (0,\infty))$ in \eqref{hE0Integ1} proves that $\rho$ satisfies \eqref{hEposInteg1}.  It remains to prove $\rho$ also satisfies \eqref{hEposBoundary}. Fix $T> 0$.  We also need to assume furthermore that $\varrho$ has a finite number of discontinuities on the time interval $[0,T]$.  For any interval $[t_i,t_{i+1}]$ and $a > 0$, taking $H(t,u) = \bb{1}_{[t_i,t_{i+1}]} (t) \times \bb{1}_{[-a,a]}(u)$ and $c = \varrho(t_i)$ in \eqref{hE0Integ1}, we have
\begin{equation}\label{equivalence1}
	\begin{aligned}
	\int_0^a &\big\{|\rho(t_i,u) - \varrho(t_i)| - |\rho(t_{i+1},u) - \varrho(t_i)| \big\}du + M \int_{t_i}^{t_{i+1}} |\varrho (t) - \varrho(t_i)| dt\\
	&\geq \int_{t_i}^{t_{i+1}}  (2p-1) \big|\Phi (\rho(t,a) ) - \Phi (\varrho(t_i))\big|dt\\
	&\geq \int_{t_i}^{t_{i+1}}  (2p-1) \big|\Phi (\rho(t,a) ) - \Phi (\varrho(t))\big|dt - \int_{t_i}^{t_{i+1}}  (2p-1) \big|\Phi (\varrho(t) ) - \Phi (\varrho(t_i))\big|dt.
	\end{aligned}
\end{equation}
Since $\Phi$ is smooth,
\[\int_{t_i}^{t_{i+1}}  (2p-1) \big|\Phi (\varrho(t) ) - \Phi (\varrho(t_i))\big|dt\leq C (\Phi) \delta_{\varrho} (t_i,t_{i+1}) (t_{i+1} - t_{i}), \] 
where $\delta_{\varrho} (t_i,t_{i+1})  = \sup_{t \in (t_i,t_{i+1})} |\varrho(t) - \varrho(t_i)|$. Similarly, the second term on the left hand side of \eqref{equivalence1} is bounded by $M \delta_{\varrho} (t_i,t_{i+1}) (t_{i+1} - t_{i})$. Taking a partition $\{t_i\}$ of the time interval $[0,T]$ and summing over the partition in \eqref{equivalence1}, we have
\begin{multline*}
		\sum_{t_i} \int_0^a \big\{|\rho(t_i,u) - \varrho(t_i)| - |\rho(t_{i+1},u) - \varrho(t_i)| \big\}du  + (C (\Phi) + M) T_0 \sup_{i} \delta_{\varrho} (t_i,t_{i+1}) \\
		\geq \int_{0}^{T}  (2p-1) \big|\Phi (\rho(t,a) ) - \Phi (\varrho(t))\big|dt.
\end{multline*}
Since $\inf_{\{t_i\}} \,\sup_{i}\, \delta_{\varrho} (t_i,t_{i+1})  = 0$ and $\rho (t,u)$ is bounded, 
\begin{equation*}
	\begin{aligned}
		Ca &\geq \inf_{\{t_i\}}\,\sum_{t_i} \int_0^a \big\{|\rho(t_i,u) - \varrho(t_i)| - |\rho(t_{i+1},u) - \varrho(t_i)| \big\}du   \\
		&\geq \int_{0}^{T}  (2p-1) \big|\Phi (\rho(t,a) ) - \Phi (\varrho(t))\big|dt.
	\end{aligned}
\end{equation*}
We conclude the proof by letting $a \rightarrow 0$.

\bibliographystyle{plain}
\bibliography{reference.bib}

\end{document}